\newcommand{\qand}{\quad\text{and}\quad}
\theoremstyle{plain}
\newtheorem{maintheorem}{Theorem}
\newtheorem{theorem}{Theorem}[section]
\newtheorem{proposition}[theorem]{Proposition}
\newtheorem{corollary}[theorem]{Corollary}
\newtheorem{lemma}[theorem]{Lemma}
\newtheorem{conjecture}{Conjecture}
\theoremstyle{definition}
\newtheorem{remark}[theorem]{Remark}
\newtheorem{definition}{Definition}
\newtheorem{example}{Example}
\newcommand{\dem}{\begin{proof}}
\newcommand{\cqd}{\end{proof}}
\def \epsilon {\varepsilon}
\def \wt {\widetilde}
\def \un{\underline }
\def \ov{\overline }
\newcommand{\cC}{\EuScript{C}}
\newcommand{\cH}{\EuScript{H}}
\newcommand{\cO}{\EuScript{O}}
\newcommand{\cP}{\EuScript{P}}
\newcommand{\cS}{\EuScript{S}}
\newcommand{\W}{\EuScript{W}}
\newcommand \cc {{\mathcal C}}
\newcommand \cK {{\mathcal K}}
\newcommand \cA {{\mathcal A}}
\newcommand \cW {{\mathcal W}}
\newcommand \cG {{\mathcal G}}
\newcommand \cI {{\mathcal I}}
\newcommand \vfi {\varphi}
\newcommand{\fB}{{\mathfrak B}}
\newcommand{\RR}{{\mathbb R}}
\newcommand{\NN}{{\mathbb N}}
\newcommand{\PP}{{\mathbb P}}
\newcommand{\XX}{{\mathbb X}}
\newcommand{\YY}{{\mathbb Y}}
\newcommand{\ZZ}{{\mathbb Z}}
\newcommand{\interior}{\operatorname{inter}}
\newcommand{\leb}{\operatorname{Leb}}
\newcommand{\supp}{\operatorname{supp}}
\newcommand{\per}{\operatorname{Per}}
\newcommand{\diam}{\operatorname{diam}}
\author{V. Araujo and V. Pinheiro}
\address{V. Araujo, Departamento de Matem\'atica, Universidade Federal da Bahia\\  Av. Ademar de Barros s/n, 40170-110 Salvador, Brazil.}
\email{vitor.d.araujo@ufba.br or vitor.araujo.im.ufba@gmail.com}
\address{V. Pinheiro, Departamento de Matem\'atica,
  Universidade Federal da Bahia, Av. Ademar de Barros s/n,
  40170-110 Salvador, Brazil.}
\email{viltonj@ufba.br}
\subjclass{
Primary: 37A99, 37C20.  Secondary: 37C29}
\keywords{historic behavior, wild historic points, generic
  properties, heteroclinic attractor}
\date{\today}
\thanks{Work carried out at the Federal University of Bahia,
  and IMPA.  This study was financed in part by the
  Coordenação de Aperfeiçoamento de Pessoal de Nível
  Superior - Brasil (CAPES) - Finance Code 001; CNPq Project
  301392/2015-3; FAPESB Project PIE0034/2016; IMPA and
  PRONEX - Dynamical Systems.}
\title{Abundance of wild historic behavior}
\begin{document}

\begin{abstract}
  Using Caratheodory measures, we associate to each positive
  orbit $\cO_{f}^{+}(x)$ of a measurable map $f$, a Borel
  measure $\eta_{x}$. We show that $\eta_{x}$ is
  $f$-invariant whenever $f$ is continuous or $\eta_{x}$ is
  a probability. 
  These measures are used to
  study the \emph{historic} points of the system, that is,
  \emph{points with no Birkhoff averages}, and we construct
  topologically generic subset of \emph{wild historic
    points} for wide classes of dynamical models. 
  We use properties of the measure
  $\eta_x$ to deduce some features of the dynamical system
  involved, like the \emph{existence of heteroclinic
    connections from the existence of open sets of historic
    points}.
\end{abstract}

\maketitle
\tableofcontents

\section{Introduction}
\label{sec:introduction}

The asymptotic behavior of a dynamical system given by some
transformation $f:M\circlearrowleft$ is in general quite
complex. To understand the behavior of the orbit of a point
$x$ we should focus on the $\omega$-limit set $\omega(x)$,
the set of accumulation points of $f^n(x), n\ge1$. The
number of iterates of the positive orbit of $x$ near some
given point $y\in\omega(x)$ depends on $y$ and, in fact, the
limit frequency of visits of the orbit of $x$ to some subset
$A$ of the phase space $M$ is non-existent in general! 

A simple and very general example is obtained taking a
non-periodic point $x$, an strictly increasing integer
sequence $n_j=\sum_{i=0}^j 10^j$ and the subset
\begin{align*}
  A=\{f^{n_1}(x),\dots,f^{n_2}(x)\}\cup\{f^{n_3}(x),\dots,f^{n_4}(x)\}\cup\dots
  =\cup_{i\ge0}\{f^{n_{2i+1}}(x),\dots,f^{n_{2(i+1)}}(x)\}.
\end{align*}
Then it is straightforward to check that
\begin{align*}
  \frac1{n_k}\sum_{j=1}^{n_k} \delta_{f^j(x)}(A)
  \begin{cases}
    \ge 9/10 & \text{if $k$ is even}
    \\
    \le 1/10 & \text{if $k$ is odd}
  \end{cases}.
\end{align*}
Thus even if $\lim_k f^k(x)$ exists, we still have that
$\lim_k \frac1{n_k}\sum_{j=1}^{n_k} \delta_{f^j(x)}(A)$ does
not exist, In Ergodic Theory behavior of this type is
bypassed through the use of weak$^*$ convergence in
Birkhoff's Ergodic Theorem, ensuring that $\lim_k
\frac1{n_k}\sum_{j=1}^{n_k} \delta_{f^j(x)}$ exists in the
weak$^*$ topology for almost every point with respect to any
$f$-invariant measure.

As defined by Ruelle in \cite{ruelle2001} and Takens in
\cite{takens08} we say that \emph{$x$ has ``historic
  behavior''} if the sequence $\frac1{n_k}\sum_{j=1}^{n_k}
\delta_{f^j(x)}$ does not converge in the weak$^*$ topology.

The above construction of non-convergent sequence of
frequency of visits can be easily obtained in any
topological Markov chain and so, through Markov partitions
and their coding, can also be obtained in every hyperbolic
basic set of an Axiom A diffeomorphism; see
e.g. Bowen and Ruelle in \cite{BR75}.

As shown by Takens \cite{takens08} (and also Dowker
\cite{Dowk53}) the set of points with historic behavior is
residual.  Hence, generically a point of a basic piece of
an Axiom A diffeomorphism has \emph{both dense orbit and
  historic behavior!} More recently,
Liang-Sun-Tian~\cite{LiSuTi2013} extend this result for the
support of non-uniformly hyperbolic measures for smooth
diffeomorphisms and Kiriki-Ki-Soma \cite{KikiLiSo15} obtain a
residual historic subset in the basin of attractor of any
geometric Lorenz model. Here we extend and strengthen these
results to whole classes of dynamical models.

We say that a measure $\mu$ which gives positive mass to the
subset of points with historic behavior is a \emph{historic
  measure}.

The construction of the heteroclinic attractor attributed to Bowen,
presented in \cite{Ta95}, as a flow with an open subset of points with
historic behavior, provides an example of such phenomenon where
Lebesgue measure is a non-invariant historic measure; see
Example~\ref{ex:Boweneye} in
Section~\ref{sec:additive-measure-eta}. Other similar examples can be
found in Gaunersdorfer \cite{gaunersdorfer1992}.  In the quadratic
family, a result from Hofbauer and Keller~\cite{HK90} provides many
other examples. Indeed, these authours proved that for the family
$f_t:[0,1]\circlearrowleft, f_t(x)=4t x(1-x)$, there exists a
non-denumerable subset of parameters $t$ in $[0,1]$ such that Lebesgue
measure is an historic measure with respect to $f_t$.

  We emphasize that Jordan, Naudot and Young showed in
  \cite{JNY09}, through a classical result from Hardy
  \cite{Hardy}, that \emph{if time averages
    $\frac1n\sum_{j=0}^{n-1}\phi(f^jx)$ of a bounded
    observable $\phi:\XX\to\RR$ do not converge}, then
  \emph{all higher order averages do not exist either, that
    is, every Césaro or H\"older higher order means fail to
    converge;} see e.g. \cite{Hard92} for the definitions of
  these higher order summation processes. This shows that
  \emph{historic points cannot be regularized by taking higher
  order averages.}

This paper aims at studying the ergodic properties of
historic measures. For that we consider a set function
$\tau_x(A)=\limsup_{n\to\infty} \frac1{n}\sum_{j=1}^{n}
\delta_{f^j(x)}(A)$ that will serve as a pre-measure to
obtain a Borel measure $\eta_x$ through a classic well-known
construction of Carathéodory; a thorough presentation of
which can be found in Rogers \cite{rogers}.  
This is a way to bypass failure of convergence of time
averages in certain classes of systems, in particular in the
``Bowen eye'' example.





We show that if $\eta_x$ is purely atomic
for an open subset of points $x$, then the system has
similar dynamics to the heteroclinic Bowen attractor; see
\cite{Ta95,gaunersdorfer1992}. In particular, Lebesgue
measure is historic.


Many more results about the set of points with historic
behavior are known. To the best of our knowledge, Pesin and
Pitskel were the first to show that these points carry full
topological pressure and satisfy a variational principle for
full shifts, in \cite{PesPits84}.  These points are named
``non-typical'' in \cite{BarSch00} by Barreira-Schmeling who
show that this set has full Hausdorff dimension and full
topological entropy for subshifts of finite type, conformal
repellers and conformal horseshoes.  These points are said
``irregular points'' in the work \cite{thomp10} of Thompson
where it is shown that, for maps with the specification
property having some point with historic behavior, the set
of such irregular points carries full topological
pressure. This indicates that this set of points is not
dynamically irrelevant.  Genericity of these ``irregular
points'' for dynamics with specification and non-convergence
of time averages for an open and dense family of continuous
functions follows from \cite[Proposition 21.18]{DGS76}; see
Lemma~\ref{le:nonconst} in Subsection
\ref{sec:non-existence-time} and also
\cite{BarrLiValls16,BarrLiValls14a,BarrLiValls14,BarrLiVals12}.

The Hausdorff dimension of subsets of such points is studied
by Barreira-Saussol \cite{BarrSau01} for hyperbolic sets for
smooth transformations and also by Olsen-Winter in
\cite{olsen-winter07}, in the setting of multifractal
analysis for subshitfs of finite type and, for several
specific transformations, also by Olsen
\cite{olsen2004a,olsen2004}, extended by Zhou-Chen
\cite{ZhCh13} and generalized by Tian-Varandas
\cite{VarXue}. For a deeper multifractal analysis of the set
of historic points, see the recent works of Bomfim and
Varandas
\cite{BomfimVarandas15a,BomfimVarandas15}. 

Recently Kiriki and Soma \cite{KrS15} show
that every $C^2$ surface diffeomorphism exhibiting a generic
homoclinic tangency is accumulated by diffeomorphisms which
have non-trivial wandering domains whose forward orbits have
historic behavior.
More recently Laboriau and Rodrigues \cite{LabRod} motivated
by the ideas in \cite{KrS15} present an example of a
parametrized family of flows admitting a dense subsets of
parameter values for which the set of initial conditions
with historic behaviour contains an open set. These are
examples of persistent historic behavior.

In this paper we also provide broad classes of examples
where $\eta_x$ has infinite mass on every open subset. These
\emph{points with wild historic behavior} are also
topologically generic for Axiom A systems, for expanding
measures, topological Markov chains, suspension semiflows
over these maps, Lorenz-like and Rovella-like attractors and
many other dynamical models. These are the ``points with
maximal oscillation'' studied by Denker, Grillenberger and
Sigmund in~\cite[Proposition 21.18]{DGS76} for continuous
dynamical systems in metric spaces admitting specification;
see Lemma~\ref{le:wildmaxosc} in
Subsection~\ref{sec:wild-historic-points-1}.  Olsen in
\cite{Ols04,Ols03} studied ``extreme non-normal'' numbers
and continued fractions which are, in particular, wild
historic points, and so form a topologically generic subset;
see Example~\ref{ex:extremenonnormal} in
Section~\ref{sec:topolog-markov-chain}.

We note that there are classes of systems with no
specification where our construction can be performed,
e.g. \emph{Lorenz-like or singular-hyperbolic attractors},
and other classes of non-uniformly hyperbolic and singular
flows as \emph{sectional-hyperbolic flow}; see
Examples~\ref{ex:hyperb-measur-Lorenz}
and~\ref{ex:hyp-meas-sec-hyp} and also Remark
\ref{rmk:nospecification} in
Subsection~\ref{sec:hyperb-measur-diffeo}. Hence, the
genericity of wild historic points is more general than the
genericity of points with maximal oscillation in systems
with specification.

\section{Statement of results}
\label{sec:statement-results}

We need some preliminary definitions to be able to state the results.

\subsection{Measure associated to a sequence of outer
  measures}
\label{sec:measure-associ-seque}

Let $\XX$ be a compact metric space, $f:\XX\to\XX$ a
measurable map with respect to the Borel $\sigma$-algebra $\fB$
and let $d$ be the distance on $\XX$.  Let
$(\xi_j)_{j\in\NN}$ be a sequence of finitely additive outer
probabilities on $\XX$, so that each $\xi_j$ is a set
function defined on all parts of $\XX$ with values in
$[0,1]$ and such that $\xi_{j}(\XX)=1$ $\forall\,j$.

\begin{definition}\label{def:premeasure}
  Given a sequence $(\xi_j)_{j\in\NN}$ of finitely additive
  outer measures with unit total mass, we define
  \begin{align*}
    \tau(A)=\limsup_{n\to\infty}
    \frac{1}{n}\sum_{j=0}^{n-1}\xi_j(A).
  \end{align*}
\end{definition}
It is easy to see that
$$
\tau(A)\le\tau(A\cup B)\le\tau(A)+\tau(B)
$$
for all $A,B\subset\XX$ and consequently
\begin{equation}\label{eq9876567}
\tau\bigg(\bigcup_{j=1}^{n}A_{j}\bigg)\le\sum_{j=1}^{n}\tau(A_{j}),
\end{equation}
for any finite collection $A_{1},\cdots,A_{n}$ of subsets of
$\XX$. 

However, one can easily find examples of countable
collections $(A_{j})_{j\in\NN}$ of sets and of measures
$(\xi_j)_{j\in\NN}$ such that
\begin{align*}
 \tau\bigg( \bigcup_{j\in\NN} A_{j} \bigg) >
 \sum_{j\in\NN}\tau(A_{j}).
\end{align*}
\begin{example}\label{ex:notsummable}
  If the positive $f$-orbit
  $\cO^{+}(x)=\{x_j=f^j(x):j\in\NN\}$ is infinite then,
  taking $A_{j}=\{x_{j}\}$ and $\xi_j$ the Dirac point mass
  at $x_j$, i.e. $\xi_j=\delta_{x_j}$, we get
  $\tau(A_{j})=0$ for each $j\ge1$ and so
  $1=\tau(\cO^{+}(x))=\tau(\cup
  A_{j})>\sum_{j}\tau(A_{j})=0$.
\end{example}

Let $\cA$ be the collection of open sets of $\XX$.
According to the definition of pre-measure of
Rogers~\cite{rogers}, $\tau$ restricted to $\cA$ is a
\emph{pre-measure}; see \cite[ Definition~5]{rogers}. 

\begin{definition}\label{def:charmeasureII}
  Given $Y\subset\XX$, define
  \begin{align*}
    \nu(Y)=\sup_{r>0}\nu_{r}(Y)\,\,\big(=\lim_{r\searrow0}\nu_{r}(Y)\big),
  \end{align*}
  where
  $\nu_{r}(Y)=\inf_{_{\cI\in\cA(r,Y)}}\sum_{I\in\cI}\tau(I)$
  and $\cA (r,Y)$ is the set of all countable covers
  $\cI=\{I_i\}$ of $Y$ by elements of $\cA $ with
  $\diam(I_i)\le r$ $\forall\,i$.
\end{definition}
The function $\nu$, defined
on the class of all subset of $\XX$, is called in \cite{rogers}
the {\em (Caratheodory) metric measure constructed from the
  pre-measured $\tau$ by Method~II} (Theorem~15 of
\cite{rogers}). 

We define $\eta$ to be the restriction of $\nu$ to the Borel
sets, i.e., $\eta=\nu\mid\fB$. From \cite[Theorem~23]{rogers}
(see also \cite[Theorem~3]{rogers}) it follows that $\eta$ is a
countable additive measure defined on the $\sigma$-algebra
$\fB$ of Borel sets.

Certain measure-theoretical properties of
$\eta$ and those of $\tau$ are provided in
Section~\ref{sec:additive-measure-eta}: first with no
dynamical assumptions, and then assuming that $\tau$ is
$f$-invariant.

\subsection{Wild historic behavior generically}
\label{sec:wild-histor-behavi}

Considering the particular choice $\xi_j=\delta_{x_j}$ with
$x_j=f^j(x_0), j\ge0$ for a given $x_0\in\XX$, it is easy to
see that the associated set function $\tau$ satisfies
$\tau(f^{-1}(A))=\tau(A)$ for every subset $A$ of $\XX$. We
denote by $\eta_{x}=\eta$ the measure obtained from $\tau$
with the above choices.

\begin{example}
  \label{ex:Boweneye}
  We consider the well-known example of a planar flow with
  divergent time averages attributed to Bowen; see
  Figure~\ref{fig.bowen} and \cite{Ta95}. We set $f=X_1$ the
  time-$1$ map of the flow, and define $\xi_j=\delta_{x_j}$
  with $x_j=f^j(x_0)$ for $j\ge1$, where $x_0$ is a point in
  the interior of the plane curve formed by the heteroclinic
  orbits connecting the fixed hyperbolic saddle points
  $A,B$.

\begin{figure}[h]
\begin{center}
\includegraphics[height=1in]{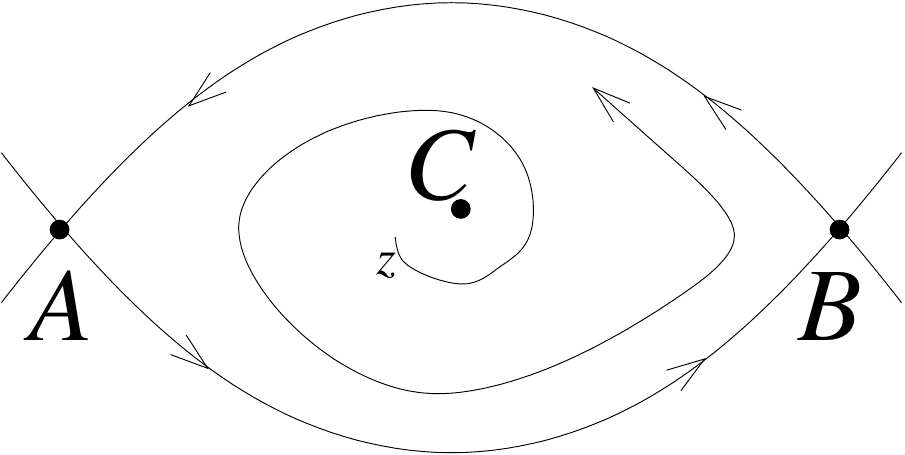}
\caption{\label{fig.bowen} A planar flow with divergent time
averages}
\end{center}
\end{figure}
We assume that this cycle is attracting, that is, the
homoclinic connections are also the set of accumulation
points of the positive orbit of $x_0$; in particular, we
have $|\det Df|<1$ near the stable/unstable manifold of the
saddles $A,B$. However, this
accumulation is highly unbalanced statistically. 

Indeed, if we denote the expanding and contracting
eigenvalues of the linearized vector field at $A$ by
$\alpha_+$ and $\alpha_-$ and at $B$ by $\beta_+$ and
$\beta_-$, and the modulus associated to the upper and lower
saddle connection by
  \begin{align*}
    \lambda=\alpha_-/\beta_+ \qand \sigma=\beta_-/\alpha_+
  \end{align*}
  then $\lambda>0$, $\sigma>0$ and $\lambda\sigma>1$, since
  the cycle is assumed to be attracting; see \cite{Ta95},
  where it is proved that for every continuous function
  $\vfi:\RR^2\to\RR$ with $\vfi(A)>\vfi(B)$ we have
  \begin{align*}
    \limsup_{n\to+\infty}\frac1n\sum_{j=0}^{n-1}\vfi(x_j)
    &=
    \frac{\sigma}{1+\sigma}\vfi(A)+\frac1{1+\sigma}\vfi(B), \qand
    \\
    \liminf_{n\to+\infty}\frac1n\sum_{j=0}^{n-1}\vfi(x_j)
    &=
    \frac{\lambda}{1+\lambda}\vfi(B)+\frac1{1+\lambda}\vfi(A).
  \end{align*}
  In particular, for any small $\epsilon>0$
  \begin{align*}
    \limsup_{n\to+\infty}\frac1n\sum_{j=0}^{n-1}\chi_{B_\epsilon(A)}(x_j)
    \ge\frac{\sigma}{1+\sigma}
    \qand
    \limsup_{n\to+\infty}\frac1n\sum_{j=0}^{n-1}\chi_{B_\epsilon(B)}(x_j)
    \ge\frac{\lambda}{1+\lambda},
  \end{align*}
  so that $\tau(B_\epsilon(A))\ge\sigma(1+\sigma)^{-1}$ and
  $\tau(B_\epsilon(B))\ge\lambda(1+\lambda)^{-1}$ for all
  $\epsilon>0$. Since we can find $\epsilon>0$ arbitrarily
  small such that $B_\epsilon(A),B_\epsilon(B)$ are in
  $\cG$, then we deduce 
  \begin{align*}
    2>\eta_{x_0}(\XX)
    =
    \eta_{x_0}(A)+\eta_{x_0}(B)
    =
    \frac{\sigma}{1+\sigma}+\frac{\lambda}{1+\lambda}
    =
    \frac{\sigma+\lambda+2\lambda\sigma}{1+\sigma+\lambda+\lambda\sigma}
    >1
  \end{align*}
  because $\lambda\sigma>1$.
\end{example}

\begin{remark}\label{rmk:tau0}
  Since orbits of all points $x$ except the fixed point $C$ accumulate
  one of the saddle fixed points $A,B$ and/or the heteroclinic
  connections $W^u(A)\cup W^u(B)$, we have that
  $\tau_x(B_\epsilon(K))=0$ for each compact subset $K$ in the
  interior of the domain bounded by the heteroclinic connection and
  all small enough $\epsilon>0$. Moreover, $\tau_x(B_\epsilon(L))=0$
  for all small enough $\epsilon>0$ and each compact subset $L$ of
  $(W^u(A)\cup W^u(B))\setminus\{A,B\}$ the stable/unstable sets of
  $A,B$ excluding $A,B$.
\end{remark}

\begin{definition}
  \label{def:historic}
  We say that $x\in\XX$ for which $\eta_x$ is not a
  probability measure is a \emph{historic point} or a point
  with \emph{historic} behavior. We denote the set of
  historic points of the map $f$ by $\cH=\cH_f$.
\end{definition}

This definition follows the one in Takens~\cite{takens08}.  The
existence of such points can be obtained in any compact invariant set
of a map which is conjugate to a full shift, e.g. an horseshoe. It can
be easily adapted to any topological Markov chain (Markov subshift of
finite type) and hence can be applied to any basic set of an Axiom A
diffeomorphism; see e.g. \cite{Sm67,BR75,Bo75} for the definitions of
Axiom A diffeomorphisms and \cite{takens08}.

In fact, Dowker~\cite{Dowk53} showed that if a transitive
homeomorphism of a compact set $\XX$ admits a point $z$ with
transitive orbit and historic behavior, then historic points
form a topologically generic subset of $\XX$. As observed
in~\cite{takens08}, the generic subset of points with
historic behavior exists also in the stable set of any basic
set $\Lambda$ of an Axiom A diffeomorphism.

Combining the construction in \cite{takens08} with density of
hyperbolic periodic points we can obtain a stronger property for a
topologically generic subset points.

\begin{definition}
  \label{def:wild}
  We say that $x\in\XX$ for which $\eta_x$ gives infinite
  mass to every open subset of a compact invariant subset
  $\Lambda$ for the dynamics, is a point with \emph{wild
    historic behavior in $\Lambda$} or a \emph{wild
    historic point}. We denote the set of wild historic
  points of the map $f$ by $\W=\W_f$.
\end{definition}

\begin{remark}
  \label{rmk:finvtaueta}
  It is clear by definition that
  $\tau_x^f=\tau_{fx}^f=\tau_y^f$ for each $x,y\in\XX$ so
  that $fy=x$. It follows that
  $\eta_x=\eta_{fx}=\eta_y$. Hence, the set $\cH_f$ of
  historic points and the set $\W_f$ of wild historic points
  are both $f$-invariant: $f^{-1}(\cH_f)=\cH_f$ and
  $f^{-1}(\W_f)=\W_f$. It is also clear that
  $\W_f\subset\cH_f$.
\end{remark}

The following result provides plenty of classes of examples
of abundance of wild historic behavior.

  \begin{maintheorem}
    \label{mthm:genericallywild}
    The set of points with wild historic behavior in 
    \begin{enumerate}
    \item every mixing topological Markov chain with a
      denumerable set of symbols (either one-sided or
      two-sided);
    \item every open continuous transitive and positively
      expansive map of a compact metric space;
    \item each local homeomorphism defined on an open dense
      subset of a compact space admitting an induced full
      branch Markov map;
    \item suspension semiflows, with bounded roof functions,
      over the local homeomorphisms of the previous item;
    \item the stable set of any basic set $\Lambda$ of
      either an Axiom A diffeomorphism, or a Axiom A vector
      field;
    \item the support of an expanding measure for a
      $C^{1+}$ local diffeomorphism away from a
      non-flat critical/singular set on a compact manifold;
    \item the support of a non-atomic hyperbolic measure for
      a $C^{1+}$ diffeomorphism, or a $C^{1+}$ vector field,
      of a compact manifold;
    \end{enumerate}
    is a topologically generic subset (denumerable
    intersection of open and dense subsets).
  \end{maintheorem}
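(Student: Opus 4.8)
The plan is to derive Theorem~\ref{mthm:genericallywild} from one abstract criterion together with a case-by-case verification of its hypotheses. \textbf{Step 1: a sufficient condition for wildness.} First I would prove the following purely measure-theoretic fact. Suppose $\Lambda$ is a perfect compact $f$-invariant set carrying infinitely many periodic orbits which are dense in $\Lambda$, and let $x\in\Lambda$ be such that, for \emph{every} periodic orbit $\mathcal O\subset\Lambda$, the uniform measure $\mu_{\mathcal O}$ is a weak$^{*}$ accumulation point of the empirical measures $\frac1n\sum_{j<n}\delta_{f^{j}x}$; then $x\in\W_{f}$. Indeed, fix a nonempty open $U\subset\Lambda$ and $L\in\NN$. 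By density of periodic orbits and perfectness, $U$ contains $L$ distinct (hence pairwise disjoint) periodic orbits $\mathcal O_{1},\dots,\mathcal O_{L}$; let $r_{0}>0$ be smaller than all distances $\dist(\mathcal O_{i},\mathcal O_{j})$, $i\ne j$, and than $\dist(\bigcup_{i}\mathcal O_{i},\Lambda\setminus U)$. Any countable open cover $\cI$ of $U$ by sets of diameter $\le r_{0}$ then splits into disjoint families $\cI_{1},\dots,\cI_{L}$ (the members meeting $\mathcal O_{1},\dots,\mathcal O_{L}$ respectively) together with a remainder, and each $\cI_{i}$ covers $\mathcal O_{i}$. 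Since $\tau(I)=\limsup_{n}\frac1n\sum_{j<n}\delta_{f^{j}x}(I)\ge\mu_{\mathcal O_{i}}(I)$ for every open $I$ (Portmanteau, along the subsequence realizing $\mu_{\mathcal O_{i}}$), we get $\sum_{I\in\cI}\tau(I)\ge\sum_{i=1}^{L}\sum_{I\in\cI_{i}}\mu_{\mathcal O_{i}}(I)\ge\sum_{i=1}^{L}\mu_{\mathcal O_{i}}(\mathcal O_{i})=L$. Hence $\nu_{r_{0}}(U)\ge L$, and letting $L\to\infty$ yields $\eta_{x}(U)=\nu(U)=\infty$; as $U$ was arbitrary, $x$ is wild. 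This is precisely the mechanism I expect Lemma~\ref{le:wildmaxosc} to encapsulate; note that when $f$ has the specification property the periodic measures are weak$^{*}$-dense in the whole set of invariant probabilities, so the hypothesis on $x$ is exactly ``maximal oscillation''.

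\textbf{Step 2: genericity.} Enumerate the (countably many) periodic orbits as $\mathcal O_{1},\mathcal O_{2},\dots$, put $\nu_{k}=\mu_{\mathcal O_{k}}$, fix a metric $\mathbf d$ for the weak$^{*}$ topology, and set
\[
\W^{*}=\bigcap_{k\ge1}\ \bigcap_{\ell\ge1}\ \bigcup_{n\ge1}\ \Big\{x\in\Lambda:\ \mathbf d\big(\tfrac1n{\textstyle\sum_{j<n}}\delta_{f^{j}x},\,\nu_{k}\big)<\tfrac1\ell\Big\}.
\]
Each set in the innermost union is open, since $x\mapsto\frac1n\sum_{j<n}\delta_{f^{j}x}$ is continuous ($f$ being continuous on $\Lambda$ in all cases, after the reductions of Step~3), so $\W^{*}$ is a $G_{\delta}$; and every $x\in\W^{*}$ satisfies the hypothesis of Step~1, so $\W^{*}\subset\W_{f}\cap\Lambda$. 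It remains to show each of these open sets is \emph{dense}: given a nonempty open $W\subset\Lambda$, a target orbit $\mathcal O_{k}$ and $\ell\ge1$, one must produce $x\in W$ one of whose empirical measures is $\frac1\ell$-close to $\mu_{\mathcal O_{k}}$. This is the classical ``glue in a long shadowing segment'' construction: take $y\in W$, keep a long initial block of its orbit so that $x\in W$, then splice in a block that shadows the periodic orbit $\mathcal O_{k}$ for a very long time (followed by anything), so that the time average over the window ending just after that block is within $\frac1\ell$ of $\mu_{\mathcal O_{k}}$. The legitimacy of the splicing is exactly what the individual classes supply. Baire's theorem then makes $\W^{*}$ a dense $G_{\delta}$ in $\Lambda$.

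\textbf{Step 3: the seven classes.} What is left is to furnish, in each item, the splicing mechanism of Step~2 and the density of periodic orbits required by Step~1. For (1) mixing denumerable topological Markov chains both are immediate from the transition rule of the chain. For (2) an open, transitive, positively expansive map of a compact metric space has the shadowing property and, being transitive, a dense set of periodic orbits, so the splicing follows by shadowing suitable pseudo-orbits (equivalently, such a map is essentially coded by a subshift of finite type, reducing to (1)). For (5) Markov partitions reduce a basic set of an Axiom~A diffeomorphism or vector field to case (1), and the ``stable set'' version follows because $y\in W^{s}(x)$ forces $\dist(f^{n}x,f^{n}y)\to0$, hence $x$ and $y$ have the same empirical accumulation measures — so wildness is inherited along stable sets, as in \cite{takens08}. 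For (3) one runs Steps~1--2 on the induced full-branch Markov map (where splicing is trivial, it being essentially a full shift) and transfers the conclusion to the original local homeomorphism through the return-time structure; and (4) follows by lifting a wild historic point of the base through the suspension, bounded roof functions keeping time averages comparable. Finally, for (6) and (7) one replaces ``$\Lambda$ with dense periodic orbits'' by an exhausting sequence of horseshoe-like pieces on which Steps~1--2 apply: the Markov inducing schemes attached to an expanding measure of a $C^{1+}$ local diffeomorphism away from a non-flat critical/singular set in case (6), and Katok's horseshoes $\Gamma_{n}\subset\supp\mu$ in case (7) (whose periodic orbits are dense in $\supp\mu$, since by the closing lemma periodic points accumulate on a full-measure, hence dense, subset of $\supp\mu$); a diagonal/exhaustion argument assembles these into a single dense $G_{\delta}$ of wild points of $\supp\mu$.

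\textbf{Main obstacle.} The conceptual heart is Step~1 — turning ``the orbit oscillates through every periodic measure'' into ``$\eta_{x}$ is infinite on every open set'' — but the technically heaviest part is Step~3 for items (6) and (7): one must verify that the non-uniformly hyperbolic machinery (inducing schemes for expanding measures; Katok's approximation by horseshoes, plus density of periodic orbits in the support of a non-atomic hyperbolic measure) really does provide the required splicing on sets exhausting $\supp\mu$, and then patch the resulting local dense $G_{\delta}$'s together. I also expect the bookkeeping needed to make every innermost set in $\W^{*}$ genuinely open — that is, to arrange that $f$ is continuous on the relevant invariant set in each case, including the only-measurable or only-partially-defined maps of items (3) and (6) — to require some care.
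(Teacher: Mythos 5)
Your overall architecture --- a wildness criterion phrased through periodic measures accumulating on the empirical measures, a $G_\delta$ of such points cut out by weak$^*$-closeness conditions, and a case-by-case splicing/reduction argument --- is essentially the paper's: Remark~\ref{rmk:wildperiodic}, Lemma~\ref{le:wildmaxosc} and the open dense sets $U_n$ of Subsection~\ref{sec:wild-historic-points} play exactly the roles of your Steps 1 and 2, and your Step 3 reductions (Markov partitions, inducing, suspensions, Katok horseshoes) match the paper's. But Step 1, which you correctly identify as the conceptual heart, has a genuine gap. You claim that density of periodic orbits plus perfectness puts $L$ pairwise disjoint periodic orbits $\mathcal O_1,\dots,\mathcal O_L$ \emph{entirely inside} an arbitrary nonempty open $U\subset\Lambda$; both the choice $r_0<\dist(\bigcup_i\mathcal O_i,\Lambda\setminus U)$ and the bound $\sum_{I\in\cI_i}\mu_{\mathcal O_i}(I)\ge\mu_{\mathcal O_i}(\mathcal O_i)=1$ (which needs the cover $\cI$ of $U$ to cover all of $\mathcal O_i$) require full containment. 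This is false: density of periodic orbits only puts periodic \emph{points} in $U$, not whole orbits. In the full $2$-shift the cylinder $[01]$ contains no complete periodic orbit at all (a periodic word all of whose shifts begin with $01$ does not exist), and $[0^n]$ contains only the orbit of the fixed point $0^\infty$. So $\nu_{r_0}(U)\ge L$ does not follow, and the criterion as stated yields nothing.

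The repair is exactly the mechanism the paper uses, and it costs a slightly stronger input from each class of examples: every nonempty open $U$ must contain periodic \emph{points} of every sufficiently large period $k$ (for a mixing topological Markov chain, prescribe the first $m$ symbols of a cylinder inside $U$ and close the word up with any period $k>m+N$). Choosing pairwise disjoint small neighborhoods $V_k\subset U$ of such points, your hypothesis on $x$ gives $\tau_x(V_k)\ge c/k$ (the empirical measures accumulate on the corresponding periodic measure, which assigns mass $1/k$ to the point), hence $\eta_x(V_k)\ge c/k$ by Lemma~\ref{Lema005454b}, and countable additivity of $\eta_x$ over the disjoint $V_k$ gives $\eta_x(U)\ge\sum_k c/k=\infty$ via the harmonic series --- this is precisely the computation in Lemma~\ref{le:wildmaxosc} and inequality \eqref{eq:wild}. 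With that replacement your Steps 2 and 3 go through along the same lines as the paper, though for items (6)--(7) you should expect the extra work the paper does with dynamical balls and ergodicity to propagate wildness from the dense family of horseshoes to a residual subset of $\supp\mu$.
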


  We stress that item (2) provides mild conditions to obtain
  a generic subset of wild historic points: positively
  expansive maps $f$ are those for which there exists
  $\delta>0$ so that any pair $x,y$ of distinct points are
  guaranteed to be $\delta$-apart in some finite time, that
  is, there exists $N=N(x,y)\ge0$ so that
  $d(f^Nx,f^Ny)>\delta$ (a variation of ``sensitive
  dependence''). This property together with the existence
  of a dense orbit in a compact metric space ensures the
  existence of plenty wild historic points.

  For detailed definitions of positively expansive, induced
  full branch Markov map, expanding measure, Axiom A systems
  and hyperbolic measures, see
  Section~\ref{sec:wild-histor-behavi-1} and references
  therein.

  The connection between properties of real numbers in the
  interval $[0,1]$ and properties of the orbits of the maps
  $T_b:[0,1]\circlearrowleft, x\mapsto bx\bmod1$ for any
  integer $b\ge2$ and the Gauss map
  $G:[0,1]\circlearrowleft, x\mapsto \frac1x\bmod1$,
  together with the natural coding of the dynamics by
  topological Markov chains, enables us to easily relate
  wild historic behavior with absolutely abnormal numbers
  and extremely non-normal continued fractions; see e.g.
  \cite{olsen2004,olsen2004a,olsen-winter07} and references
  therein. Theorem~\ref{mthm:genericallywild} contains some
  of the genericty results in these works as particular
  cases; see e.g. Example~\ref{ex:extremenonnormal} in
  Section~\ref{sec:topolog-markov-chain}.

  We have obtained a topologically generic subset of wild
  historic points for abundant classes of systems with (non
  uniform) hyperbolic behavior. This indicates that
  \emph{the absence of wild historic points implies that all
    invariant probability measures are either atomic or
    non-hyperbolic}, that is, no wild historic behavior
  forces every non-atomic invariant probability measure to
  have zero Lyapunov exponents, either for diffeomorphisms,
  vector fields or endomorphisms of compact manifolds; and
  even for local diffeomorphims away from a singular
  set. The only extra assumptions being sufficient
  smoothness (H\"older-$C^1$ seems to be enough) and a
  singular/critical set regular enough. We present some
  conjectures along these lines in
  Section~\ref{sec:some-open-questions}.

\subsection{Historic behavior and heteroclinic attractors}
\label{sec:open-histor-behavi}

The following result shows that strong historic behavior in
the neighborhood of two hyperbolic periodic points is
sufficient to obtain an heteroclinic connection relating the
two points.

\begin{maintheorem}\label{mthm:openhistoric-heteroclinic}
  Let $f:M\to M$ be a $C^1$ diffeomorphism on a compact
  boundaryless manifold $M$ endowed with a pair
  of hyperbolic periodic points $P,Q$ satisfying, for
  some $\epsilon>0$
\begin{itemize}
\item[(H)] $\eta_x$ is atomic with two atoms $P,Q$ for every
  $x$ either in $B_\epsilon(P)\setminus(W^s_\epsilon(P)\cup
  W^u_\epsilon(P))$ or $ B_\epsilon(Q)\setminus(W^s_\epsilon(Q)\cup
  W^u_\epsilon(Q))$.
\end{itemize}
Then $P$ and $Q$ have a heteroclinic cycle: $W^u(P)=W^s(Q)$
and $W^s(P)=W^u(Q)$.
\end{maintheorem}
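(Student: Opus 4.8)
The plan is to extract from hypothesis (H) strong recurrence restrictions on the forward orbits of points in the open set $V_P\cup V_Q$, where $V_P:=B_\epsilon(P)\setminus(W^s_\epsilon(P)\cup W^u_\epsilon(P))$ and $V_Q:=B_\epsilon(Q)\setminus(W^s_\epsilon(Q)\cup W^u_\epsilon(Q))$, and then to marry them with the local hyperbolic geometry at the two saddles. I may assume $P,Q$ are fixed points (otherwise pass to a suitable power of $f$ and replace ``atoms at $P,Q$'' by ``carried by the periodic orbits $\cO(P),\cO(Q)$''), and I fix $\epsilon>0$ so small that $\overline{B_\epsilon(P)}$ and $\overline{B_\epsilon(Q)}$ are disjoint linearising neighbourhoods with $W^s_\epsilon(P)\cap W^u_\epsilon(P)=\{P\}$ and $W^s_\epsilon(Q)\cap W^u_\epsilon(Q)=\{Q\}$. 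By (H), for $x\in V_P\cup V_Q$ we have $\eta_x=a_x\delta_P+b_x\delta_Q$ with $a_x,b_x>0$, and since $f$ is continuous $\eta_x$ is $f$-invariant (as in the discussion preceding Theorem~\ref{mthm:ergodicdecomp}); moreover $\eta_{f^m(x)}=\eta_x$ for every $m\ge0$.

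First I would record three consequences of (H), for every $x\in V_P\cup V_Q$. Finite subadditivity of $\tau$ (see \eqref{eq9876567}) together with the Method~II construction gives $\eta_x(U)\ge\sup\{\tau_x(K):K\subset U\ \text{compact}\}$ for open $U$; applying this to small balls, an atom of $\eta_x$ at a point forces $\cO^+(x)$ to visit every neighbourhood of that point with positive upper frequency. Hence (i) $P,Q\in\omega(x)$ and $\cO^+(x)$ approaches both $P$ and $Q$ arbitrarily closely. (ii) $\cO^+(x)$ meets neither $W^s(P)$ nor $W^s(Q)$: otherwise some $f^m(x)$ would be forward asymptotic to $P$ or $Q$, so $\eta_x=\eta_{f^m(x)}$ would be a single point mass, contradicting (i). (iii) Since $\eta_x(M\setminus\{P,Q\})=0$, the same inequality gives $\tau_x(C)=0$ for every compact $C$ with $P,Q\notin C$: the orbit of $x$ has zero asymptotic frequency in the complement of any neighbourhood of $\{P,Q\}$.

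The technical core is a rigidity lemma: \emph{no ergodic $f$-invariant probability $\mu$ with $\mu(\{P,Q\})<1$ has support meeting $B_\epsilon(P)\cup B_\epsilon(Q)$}. Indeed, if $\mu(B_\epsilon(P))>0$, then, $\mu$ being ergodic and $\neq\delta_P$, one has $\mu(W^s_\epsilon(P))=0$ and $\mu(W^u_\epsilon(P))=0$ — for the latter, $W^u_\epsilon(P)\setminus\{P\}$ splits into disjoint fundamental annuli $f^n(A_0)$ of equal $\mu$-mass, which therefore vanishes — so $\mu(V_P)>0$; a $\mu$-generic point $y\in V_P$ then has $\eta_y=\mu$ (by the characterisation of $\eta$ for continuous dynamics: $\eta_y$ is the weak$^*$-limit of the empirical averages of $\cO^+(y)$ whenever that limit exists), while (H) forces $\eta_y$ to be carried by $\{P,Q\}$; contradiction. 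Applying the lemma to ergodic components of measures sitting near $P$ also rules out transverse homoclinic orbits of $P$ inside $B_\epsilon$, so $W^u_\epsilon(P)\cap W^s(P)=\{P\}$, and likewise for $Q$.

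Now I would combine these. Since $\cO^+(x)$ ($x\in V_P$) returns arbitrarily close to $P$ off $W^s(P)$, the inclination lemma makes it shadow the whole of $W^u_\epsilon(P)$ while leaving $B_\epsilon(P)$; letting the returns tend to $P$ gives $W^u_\epsilon(P)\subset\omega(x)$, hence $\overline{W^u(P)}\subset\omega(x)$, and symmetrically $\overline{W^u(Q)}\subset\omega(x)$. Fix $z\in W^u_\epsilon(P)\setminus\{P\}$; then $\cO^+(z)\subset W^u(P)$ and $\omega(z)\subset\omega(x)$. If $\cO^+(z)$ ever enters $B_\epsilon(Q)$ it must do so on $W^s_\epsilon(Q)$ — it cannot meet $W^u_\epsilon(Q)$ (a point there would be backward asymptotic to both $P$ and $Q$), nor land in $V_Q$ (then $\eta_z=\eta_{f^n(z)}$ would satisfy (H), forcing $P\in\omega(z)$, impossible since $z\in W^u(P)\setminus\{P\}$ has no homoclinic recurrence) — hence $z\in W^s(Q)$. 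If instead $\cO^+(z)$ avoids $B_\epsilon(Q)$ for all time, then $\omega(z)$ is a compact invariant subset of $\overline{W^u(P)}\subset\omega(x)$ disjoint from $B_\epsilon(Q)$; as $\omega(x)$ is chain transitive, $\omega(z)$ lies in the chain class common to $P$ and $Q$, and — this is the delicate point — must therefore meet $B_\epsilon(P)$, so the rigidity lemma gives $P\in\omega(z)$, again impossible. Hence $W^u(P)\setminus\{P\}\subset W^s(Q)$, and by the $P\leftrightarrow Q$ symmetry of (H), $W^u(Q)\setminus\{Q\}\subset W^s(P)$. For the reverse inclusions I would take $w\in W^s(Q)\setminus\{Q\}$, reduce to $w\in W^s_\epsilon(Q)\setminus\{Q\}$ via a forward iterate, and apply the same scheme to the backward orbit of $w$ (using the inclinations of $W^u$ at $Q$ and the rigidity lemma for $f^{-1}$) to conclude $\alpha(w)=\{P\}$, i.e. $w\in W^u(P)$; this yields $W^u(P)=W^s(Q)$ and, symmetrically, $W^s(P)=W^u(Q)$. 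The hard part is exactly the excluded case: ruling out a recurrent set buried in $\overline{W^u(P)}$ and staying away from both linearising balls, which requires pairing chain-recurrence theory (forcing $\overline{W^u(P)}$ into the common chain class of $P$ and $Q$) with the rigidity lemma in a quantitative way.
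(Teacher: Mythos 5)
There is a genuine gap, and it is exactly at the point you flag as ``delicate''. In the case where $z\in W^u_\epsilon(P)\setminus\{P\}$ has forward orbit avoiding $B_\epsilon(Q)$ forever, you assert that $\omega(z)$, being a compact invariant subset of the chain class common to $P$ and $Q$, ``must therefore meet $B_\epsilon(P)$''. This does not follow: a chain class (or a chain transitive set such as $\omega(x)$) can perfectly well contain compact invariant subsets disjoint from any prescribed open set, e.g.\ a periodic orbit far from both saddles, and your rigidity lemma is silent about ergodic measures whose supports avoid $B_\epsilon(P)\cup B_\epsilon(Q)$ altogether. Even if $\omega(z)$ did meet $B_\epsilon(P)$, you would still need an \emph{invariant} measure on $\omega(z)$ whose support meets that ball, which is not automatic since $\omega(z)\cap B_\epsilon(P)$ need not carry any invariant mass. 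The mirrored step for the reverse inclusions inherits the same hole. A secondary soft spot: the inclination lemma is stated for disks transverse to $W^s(P)$, not for single returning points, so ``$\cO^+(x)$ shadows the whole of $W^u_\epsilon(P)$'' needs more care (each return shadows only a piece of $W^u_\epsilon(P)$ determined by its unstable coordinate).

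For comparison, the paper closes this hole by a quantitative frequency count along \emph{backward} orbits rather than by chain recurrence. It extracts from the returns $x_{k_i}\to P$ a convergent subsequence $x_{n_k}\to y\in W^s_{\epsilon/2}(P)$, and estimates $\dist(x_{n_k-i},y_{-i})\le\sigma^{-m_k}\lambda^{i}$, where $\sigma$ is the weakest expansion at $P$ and $\lambda=\sup\|Df\|$. If no backward iterate $y_{-\ell}$ entered $B_\epsilon(Q)$, then each deep visit of $x_{n_k}$ to $B_\epsilon(P)$ would be preceded by a block of iterates of length proportional to $m_k$ (ratio $\log\sigma/\log\lambda$) spent in the compact corridor $K=B_\epsilon(W^s_\epsilon(P))\setminus(B_\epsilon(P)\cup B_\epsilon(Q))$, giving $\tau_x(K)\ge\xi\,\tau_x(B_\epsilon(P))>0$ and contradicting the fact that $\eta_x$ has atoms only at $P,Q$. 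Hypothesis (H) then forces $y_{-\ell}\in W^u(Q)$ and rules out transverse or tangential intersection of $W^u(Q)$ with $W^s(P)$ there, yielding $W^u(Q)\subset W^s(P)$, with the symmetric inclusion and a dimension count finishing the proof. Your preliminary observations (i)--(iii) and the rigidity lemma are sound and in the spirit of the paper's use of (H), but without an argument of this quantitative type the central exclusion step remains unproved.
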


That is, this result gives a sufficient condition for a
diffeomorphism to exhibit an heteroclinic attractor, as in
the example of Bowen as presented in \cite{Ta95} and
Example~\ref{ex:Boweneye}.

\subsection{Organization of the text}
\label{sec:organization-text}

We present the construction of the measure $\eta$ through a
pre-measure $\tau$ in a very general dynamical setting in
Section~\ref{sec:additive-measure-eta}, where we also
consider the measure $\eta_x$ for the orbit of a given point
$x$. 
We construct the
generic subset of wild historic points in
Section~\ref{sec:wild-histor-behavi-1} proving
Theorem~\ref{mthm:genericallywild} and providing abundant
classes of examples to apply our results. 
Finally, in Section~\ref{sec:strong-histor-behavi} we prove
Theorem~\ref{mthm:openhistoric-heteroclinic} and in
Section~\ref{sec:some-open-questions} we propose some
conjectures.

\subsection*{Acknowledgments}

We thank 
Varandas (UFBA) for very useful comments and references.

Both authors were partially supported by CNPq (V.A. by
projects 477152/2012-0 and 301392/2015-3; V.P. by project
304897/2015-9), FAPESB and PRONEX-Dynamical Systems
(Brazil). V.P. would also like to thank the finantial
support from Balzan Research Project of J. Palis.


\section{The measure $\eta$ versus the pre-measure $\tau$}
\label{sec:additive-measure-eta}

Here we provide some properties of the pre-measure $\tau$ and of the
measure $\eta$ without assuming invariance of $\tau$; where $\tau$ and
$\eta$ are given according to Definitions~\ref{def:premeasure}
and~\ref{def:charmeasureII}, respectively.

\subsection{Some general properties of $\eta$}
\label{sec:some-general-propert}

Here we prove some properties of $\eta$ and $\tau$ of a general
nature, without special dynamical assumptions.

\begin{lemma}\label{le:uncountable-K}
  If $\cK$ is a uncountable collection of pairwise disjoint
  compact sets of $\XX$, then there is a countable
  sub-collection $\cK_{0}\subset\cK$ such that
  $\lim_{\varepsilon\to0}\tau(B_{\varepsilon}(K))=0$ for
  every $K\in\cK\setminus\cK_{0}$.
\end{lemma}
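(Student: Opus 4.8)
The plan is to exploit the sub-additivity of $\tau$ expressed in~\eqref{eq9876567} together with the fact that $\tau(\XX)\le 1$, using a standard ``summable masses'' counting argument. First I would fix, for each $K\in\cK$, the quantity $m(K):=\lim_{\varepsilon\to 0}\tau(B_\varepsilon(K))$; since $\varepsilon\mapsto\tau(B_\varepsilon(K))$ is monotone in $\varepsilon$ and bounded below by $0$, this limit exists and equals $\inf_{\varepsilon>0}\tau(B_\varepsilon(K))\in[0,1]$. The goal is exactly to show that $m(K)>0$ holds for at most countably many $K\in\cK$; taking $\cK_0$ to be that (automatically countable) subcollection then finishes the proof.

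Next I would stratify: for $n\ge 1$ let $\cK^{(n)}=\{K\in\cK: m(K)\ge 1/n\}$, so that $\{K: m(K)>0\}=\bigcup_{n\ge1}\cK^{(n)}$, and it suffices to prove each $\cK^{(n)}$ is finite. Suppose toward a contradiction that $\cK^{(n)}$ contains distinct sets $K_1,\dots,K_N$ with $N>n$. Since the $K_i$ are compact and pairwise disjoint, a compactness/normality argument (finitely many disjoint compacta in a metric space are uniformly separated) provides a single $\varepsilon>0$ such that the open balls $B_\varepsilon(K_1),\dots,B_\varepsilon(K_N)$ are pairwise disjoint. Then by~\eqref{eq9876567} and monotonicity of $\tau$ (which follows from $\tau(A)\le\tau(A\cup B)$),
\begin{align*}
  1 \;\ge\; \tau(\XX)\;\ge\;\tau\Big(\bigcup_{i=1}^{N}B_\varepsilon(K_i)\Big)
  \;=\;\text{(not needed as equality)}\;,
\end{align*}
and more precisely, choosing $\varepsilon$ also small enough that $\tau(B_\varepsilon(K_i))\ge m(K_i)-\tfrac1{2n}\ge\tfrac1{2n}$ for each $i$ — here using that the subcollection is finite so finitely many conditions on $\varepsilon$ can be met simultaneously — one gets $1\ge\sum_{i=1}^N\tau(B_\varepsilon(K_i))$ is false in the wrong direction; instead the correct inequality is the reverse of~\eqref{eq9876567}. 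So I would instead use disjointness directly: for pairwise disjoint open sets, finite additivity of the outer \emph{probabilities} $\xi_j$ gives $\sum_{i}\xi_j(B_\varepsilon(K_i))\le 1$, hence $\sum_i \tfrac1n\sum_{j=0}^{n-1}\xi_j(B_\varepsilon(K_i))\le 1$, and passing to $\limsup_n$ together with super-additivity of $\limsup$ over a finite sum yields $\sum_{i=1}^N\tau(B_\varepsilon(K_i))\le 1$. This contradicts $\sum_{i=1}^N\tau(B_\varepsilon(K_i))\ge N\cdot\tfrac1{2n}>\tfrac N{2n}$ once $N>2n$, so $\cK^{(n)}$ is finite (of cardinality at most $2n$).

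The union $\cK_0:=\bigcup_n\cK^{(n)}$ is therefore countable, and for every $K\in\cK\setminus\cK_0$ we have $m(K)=\lim_{\varepsilon\to0}\tau(B_\varepsilon(K))=0$, as required. The one genuinely delicate point — and the place I would be most careful — is the direction of the inequality: $\tau$ is only \emph{sub}-additive in general~\eqref{eq9876567}, so I must not bound $\sum_i\tau(B_\varepsilon(K_i))$ from above by $\tau(\bigcup_i B_\varepsilon(K_i))$; the bound in the right direction comes from the finite additivity of the individual outer probabilities $\xi_j$ on disjoint sets together with $\xi_j(\XX)=1$, which is why pairwise disjointness of the enlarged balls (not merely of the $K_i$) is essential, and that is exactly what compactness buys us. A minor auxiliary point is verifying that $\tau$ is monotone, which is immediate from the stated inequality $\tau(A)\le\tau(A\cup B)$.
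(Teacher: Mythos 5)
There is a genuine gap at precisely the step you flagged as delicate, and it is fatal to your finiteness claim. From $\sum_{i=1}^N\xi_j(B_\varepsilon(K_i))\le 1$ for every $j$ you correctly get $\limsup_{n}\frac1n\sum_{j=0}^{n-1}\sum_{i=1}^N\xi_j(B_\varepsilon(K_i))\le 1$, but to pass from this to $\sum_{i=1}^N\tau(B_\varepsilon(K_i))\le1$ you would need the $\limsup$ of a finite sum to dominate the sum of the $\limsup$'s. The opposite is what holds: $\limsup$ is \emph{sub}additive ($\limsup(a_n+b_n)\le\limsup a_n+\limsup b_n$), and the reverse inequality fails whenever the individual sequences attain their upper limits along different subsequences --- which is exactly the phenomenon this paper is built around. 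Example~\ref{ex:Boweneye} is an explicit counterexample to the inequality you need: $B_\varepsilon(A)$ and $B_\varepsilon(B)$ are disjoint, yet $\tau(B_\varepsilon(A))+\tau(B_\varepsilon(B))\ge\frac{\sigma}{1+\sigma}+\frac{\lambda}{1+\lambda}>1$. Consequently the claim that each stratum $\cK^{(n)}$ is finite is also false in general: one can arrange countably infinitely many pairwise disjoint compacta, each visited with upper frequency $1$ along pairwise disjoint blocks of times, so that $m(K)=1$ for all of them. The lemma asserts only countability, and that is all that is true.

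The repair is to force the additivity to take place at a single fixed time, where it is legitimate --- this is what the paper's proof does. After stratifying essentially as you do (so that $\tau(B_\varepsilon(K))>1/n$ for all $\varepsilon>0$ and all $K$ in the stratum, using monotonicity in $\varepsilon$), it stratifies further by the finite time $s$ at which the average $\frac1s\sum_{i=0}^{s-1}\xi_i(B_\varepsilon(K))$ exceeds $1/(2n)$ for every $\varepsilon>0$. For fixed $s$ this average is a single finitely additive set function with total mass $1$, so after separating finitely many of these compacta by pairwise disjoint $\varepsilon$-balls (your separation argument is fine and is reused verbatim), the substratum indexed by $(n,s)$ can contain only finitely many elements; the union over $s$ and $n$ is countable. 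In short: keep your stratification and your disjointness argument, but replace the illegitimate interchange of sum and $\limsup$ by a pigeonhole over the finitely many times at which the Birkhoff-type averages are already large.
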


\begin{proof}
  Let $\cK_{n}$ be the family of $K\in\cK$ such that
  $\frac{1}{n}<\lim_{\varepsilon\to0}\tau(B_{\varepsilon}(K))
  \le\frac{1}{n-1}$, for $n>1$, and let
  $\cK_{0}=\bigcup_{n\ge1}\cK_{n}$.  If $\cK_{0}$ is
  uncountable, then there is some $n\ge1$ such that
  $\cK_{n}$ is uncountable.  As
  $\varepsilon\mapsto\tau(B_{\varepsilon}(K))$ is an
  increasing function, it follows that
  $\tau(B_{\varepsilon}(K))>1/n$ for all $K\in\cK_{n}$ and
  all $\varepsilon>0$.  Set $\cK^{1}=\cK_{n}$ and let
  $\cK^{1}_{j}$ be the collection of $K\in\cK^{1}$ such that
  $\frac{1}{j}\sum_{i=0}^{j-1}\xi_i(B_{\varepsilon}(K))>\frac{1}{2n}$
  for each $\varepsilon>0$.

  As $\cK^{1}$ is uncountable, there are infinitely many
  $j\in\NN$ such that $\cK^{1}_{j}$ is uncountable.  Thus,
  let ${s}$ be such that $\cK^{2}=\cK^{1}_{{s}}$ is
  uncountable. 

  Let $K_{1},K_{2},\cdots,K_{4n}\in\cK^{2}$ be any finite
  collection of elements of $\cK^{2}$ with $K_{i}\ne K_{t}$
  for $i\ne t$. Let also $\varepsilon>0$ be such that
  $B_{\varepsilon}(K_{i})\cap
  B_{\varepsilon}(K_{t})=\emptyset$ for $i\ne t$. For
  $L=B_{\varepsilon}(K_{1})\cup\cdots\cup B_{\varepsilon}(K_{4n})$
  we have 
  \begin{align*}
    1 \ge \frac{1}{s}\sum_{i=0}^{s-1}\xi_i(L) =
    \frac{1}{s}\sum_{t=0}^{4n-1}\sum_{i=0}^{{s}-1}
    \xi_i(B_{\varepsilon}(K_{t})) > 4n\frac{1}{2n} =2
  \end{align*}
  a contradiction. Hence $\cK_{0}$ is countable and, as
  $\lim_{\varepsilon\to0}\tau(B_{\varepsilon}(K))=0$ for
  every $K\in\cK\setminus\cK_{0}$ the proof is complete.
\end{proof}

The previous result allows us to show that compact sets with null
$\eta$-measure are those compacta with neighborhoods of vanishing
pre-measure.

\begin{lemma}\label{Lema005454a}
  If $K\subset\XX$ is a compact set, then $\eta(K)=0$ $\iff$
  $\lim_{\varepsilon\to0}\tau(B_{\varepsilon}(K))=0$.
\end{lemma}

\dem
First suppose that $\eta(K)=0$. Given $\delta>0$ let
$\cc$ be an open cover of $K$ such that
$\sum_{A\in\cc}\tau(A)<\delta$. As $K$ is compact there is a
finite subcover $\cc'\subset\cc$ of $K$. Clearly
$\sum_{A\in\cc'}\tau(A)\le\sum_{A\in\cc}\tau(A)<\delta$. Let
$\varepsilon_{0}>0$ be such that
$B_{\varepsilon_{0}}(K)\subset\bigcup_{A\in\cc'}A$. As
$\cc'$ is finite, we get
$\tau(B_{\varepsilon}(K))\le\tau(\bigcup_{A\in\cc'}A)\le\sum_{A\in\cc'}\tau(A)<\delta$
$\forall0<\varepsilon\le\varepsilon_{0}$. As a consequence,
$\lim_{\varepsilon\to0}\tau(B_{\varepsilon}(K))=0$.

Now, suppose that
$\lim_{\varepsilon\to0}\tau(B_{\varepsilon}(K))=0$. Let
$\cc\in\cA(r,K)$ be a finite cover of $K$.  Given $\delta>0$, let
$\varepsilon>0$ be such that
$\tau(B_{\varepsilon}(K)<\frac{\delta}{\#\cc}$. Let
$\cc'=\{A\cap B_{\varepsilon}(K)$ $;$ $A\in\cc\}$. Note that
$\cc'\in\cA(r,K)$ and
$\sum_{A\in\cc'}\tau(A)=\sum_{A\in\cc}\tau(A\cap
B_{\varepsilon}(K))\le\sum_{A\in\cc}\tau(B_{\varepsilon}(K))<\#\cc\frac{\delta}{\#\cc}=\delta$. Thus,
$\nu_{r}(K)=0$ $\forall\,r>0$ and, as a consequence,
$\eta(K)=0$.
\cqd

As a direct consequence of Lemmas~\ref{le:uncountable-K}
and~\ref{Lema005454a} we get the following.

\begin{corollary}\label{Cor005454a}
  If $\cK$ is a uncountable collection of pairwise disjoint
  compact subsets of $\XX$, then there is a countable
  sub-collection $\cK_{0}\subset\cK$ such that $\eta(K)=0$ for
  every $K\in\cK\setminus\cK_{0}$.
\end{corollary}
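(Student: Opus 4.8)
The plan is to combine the two preceding results, since the corollary is essentially a translation of Lemma~\ref{le:uncountable-K} through the characterization in Lemma~\ref{Lema005454a}. First I would apply Lemma~\ref{le:uncountable-K} directly to the uncountable family $\cK$ of pairwise disjoint compact sets. That lemma produces a countable sub-collection $\cK_0\subset\cK$ with the property that $\lim_{\varepsilon\to0}\tau(B_\varepsilon(K))=0$ for every $K\in\cK\setminus\cK_0$. This $\cK_0$ will be exactly the one claimed in the corollary.

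Next I would invoke Lemma~\ref{Lema005454a}: for a compact set $K\subset\XX$ we have $\eta(K)=0$ if and only if $\lim_{\varepsilon\to0}\tau(B_\varepsilon(K))=0$. Applying the ``$\Leftarrow$'' direction of this equivalence to each $K\in\cK\setminus\cK_0$, using the conclusion of the previous paragraph, gives $\eta(K)=0$ for every $K\in\cK\setminus\cK_0$, which is precisely the assertion. Each member of $\cK$ is compact by hypothesis, so the lemma applies without any extra verification.

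There is no real obstacle here — the content has already been done in Lemmas~\ref{le:uncountable-K} and~\ref{Lema005454a}, and the corollary is just their conjunction. The only point worth a sentence of care is that the countable set $\cK_0$ supplied by Lemma~\ref{le:uncountable-K} is a sub-collection of $\cK$ (which it is, by construction, being a union of sub-families $\cK_n$ of $\cK$), so it is legitimate to speak of $\cK\setminus\cK_0$ and to conclude $\eta(K)=0$ for the remaining, co-countably many, elements $K$.
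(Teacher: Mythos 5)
Your proof is correct and is exactly the intended argument: the corollary is stated in the paper without proof precisely because it is the immediate conjunction of Lemma~\ref{le:uncountable-K} and the equivalence in Lemma~\ref{Lema005454a}. Nothing is missing.
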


Now we show that the $\eta$-measure dominates the pre-measure of the
closure of open sets with negligible boundary.

\begin{lemma}\label{Lema005454b}
  If $A\subset\XX$ is an open set with $\eta(\partial A)=0$
  then $\tau(\overline{A})\le\eta(A)$.
\end{lemma}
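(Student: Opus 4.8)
The plan is to exploit compactness of $\overline{A}$ together with the hypothesis $\eta(\partial A)=0$ in order to replace the countable covers appearing in the definition of $\eta(A)$ by finite ones, so that the finite subadditivity~\eqref{eq9876567} of $\tau$ becomes applicable; recall that $\tau$ is only finitely subadditive (Example~\ref{ex:notsummable}), so no direct comparison between $\tau$ and $\eta$ is available term by term.

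First I would record the elementary topological facts: since $A$ is open, $\overline{A}=A\cup\partial A$, and both $\overline{A}$ and $\partial A$ are closed, hence compact, subsets of $\XX$; in particular $\partial A$ is a Borel set, so the hypothesis gives $\nu(\partial A)=\eta(\partial A)=0$, whence $\nu_r(\partial A)=0$ for every $r>0$ (the numbers $\nu_r(\partial A)$ being nonnegative with supremum $0$).

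Next, fix $r>0$ and $\delta>0$. Choose an arbitrary countable cover $\cI\in\cA(r,A)$ of $A$ by open sets of diameter at most $r$, and, using $\nu_r(\partial A)=0$, a countable cover $\cc\in\cA(r,\partial A)$ of $\partial A$ by such sets with $\sum_{J\in\cc}\tau(J)<\delta$. Then $\cI\cup\cc$ is an open cover of the compact set $\overline{A}=A\cup\partial A$, hence admits a finite subcover $\{I_1,\dots,I_k\}\cup\{J_1,\dots,J_m\}$. Applying~\eqref{eq9876567} and monotonicity of $\tau$,
$$
\tau(\overline{A})\le\sum_{i=1}^{k}\tau(I_i)+\sum_{\ell=1}^{m}\tau(J_\ell)\le\sum_{I\in\cI}\tau(I)+\delta .
$$
Taking the infimum over all $\cI\in\cA(r,A)$ gives $\tau(\overline{A})\le\nu_r(A)+\delta$, and letting $\delta\searrow0$ yields $\tau(\overline{A})\le\nu_r(A)\le\sup_{r>0}\nu_r(A)=\nu(A)=\eta(A)$, which is the claim.

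The only slightly delicate point, and the place where the hypothesis $\eta(\partial A)=0$ is genuinely used, is that the boundary cover $\cc$ must be chosen with mesh at most $r$ so that $\cI\cup\cc$ remains an admissible cover at scale $r$; this is exactly $\nu_r(\partial A)=0$. Everything else is a routine compactness-plus-finite-subadditivity argument, and no use of $f$-invariance of $\tau$ is needed. One could alternatively first deduce $\tau(\partial A)=0$ from Lemma~\ref{Lema005454a} and try to bound $\tau(A)$ directly, but $A$ itself need not be compact, so covering the compact set $\overline{A}$ as above seems to be the natural route.
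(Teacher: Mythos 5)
Your proof is correct. It follows the same basic strategy as the paper's --- compactness plus the finite subadditivity \eqref{eq9876567} of $\tau$, with the hypothesis $\eta(\partial A)=0$ making the boundary contribution negligible --- but the implementation is genuinely different in how the boundary is handled. The paper first passes through Lemma~\ref{Lema005454a} to convert $\eta(\partial A)=0$ into $\tau(B_{2\varepsilon}(\partial A))<\delta/2$ for suitable $\varepsilon$, then covers the inner compact set $K_{\varepsilon}=A\setminus B_{\varepsilon}(\partial A)$ by finitely many members of a near-optimal cover of $A$ and uses the inclusion $\overline{A}\subset A'\cup B_{2\varepsilon}(\partial A)$, where $A'$ is that finite union. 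You instead stay entirely at the level of the covers defining $\nu_{r}$: from $\nu(\partial A)=0$ you get $\nu_{r}(\partial A)=0$ for every $r>0$, so you may adjoin to any cover of $A$ a same-scale cover of $\partial A$ of total $\tau$-mass below $\delta$, and then extract a finite subcover of the compact set $\overline{A}$ itself. Your route is somewhat more economical: it bypasses Lemma~\ref{Lema005454a} and the appeal to regularity of $\eta$ (which the paper invokes to choose $K_{\varepsilon_{0}}$ with $\eta(K_{\varepsilon_{0}})>\eta(A)-\delta/2$ but never actually uses in the final chain of inequalities), and it isolates the only ingredients really needed: monotonicity and finite subadditivity of $\tau$, compactness of $\overline{A}$, and the vanishing of $\nu_{r}$ on $\partial A$. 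What the paper's version buys is a single open neighborhood of $\partial A$ with small $\tau$-mass, a form of boundary control that it reuses in later arguments (e.g.\ in Corollary~\ref{corollary0562572} and Theorem~\ref{TheoremContinuous}).
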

\dem Let $\delta>0$ be a positive number.  As $\eta$ is
regular, let $\varepsilon_{0}>0$ be small so that
$\eta(K_{\varepsilon_{0}})>\eta(A)-\delta/2$, where
$K_{\varepsilon}=A\setminus B_{\varepsilon}(\partial A)$. By
lemma~\ref{Lema005454a}, we can take
$0<\varepsilon<\varepsilon_{0}/2$ so that
$\tau(B_{2\varepsilon}(\partial A))<\delta/2$.

Let $r>0$ be small enough so that
$\nu_{r}(A)\le\eta(A)\le\nu_{r}(A)+\delta/2$. Let
$\cc\in\cA(r,A)$ be such that
$\nu_{r}(A)\le\sum_{V\in\cc}\tau(V)\le\nu_{r}(A)+\delta/2$. Thus,
$\eta(A)\le\sum_{V\in\cc}\tau(V)+\delta/2\le\eta(A)+\delta$,
or equivalently,
$$\bigg|\eta(A)-\sum_{V\in\cc}\tau(V)\bigg|\le\delta/2.$$
As $\cc_{1}=\{V\cap A\,;\,V\in\cc\}\in\cA(r,V)$ and
$\sum_{V\in\cc_{1}}\tau(V)\le\sum_{V\in\cc}\tau(V)$,
changing $\cc$ by $\cc_{1}$ if necessary, we may assume that
$V\subset A$ $\forall V\in\cA(r,A)$.

Now since $K_{\varepsilon}$ is compact, there is some finite
open cover of $\cc'\subset\cc$ of
$K_{\varepsilon}$. Therefore,
$A':=\bigcup_{V\in\cc'}V\subset A$ and
$\sum_{V\in\cc'}\tau(V)\le\sum_{V\in\cc}\tau(V)<\eta(A)+\delta/2$. As
$\cc'$ is finite, we get $\tau(A')\le\sum_{V\in\cc'}\tau(V)$
and so,
$$\tau(A')\le\eta(A)+\delta/2.$$
As $\overline{A}\subset A'\cup B_{2\varepsilon}(\partial
A)$, we get $\tau(\overline{A})\le\tau(A')+\delta/2$. As a
consequence $\tau(\overline{A})\le\eta(A)+\delta$ for
every $\delta>0$.  \cqd

In what follows we define $\cG_{0}$ to be the subfamily of
open subsets $U\subset\XX$ such that $\eta(\partial U)=0$;
and then define
\begin{align*}
  \cG=\{U\in\cG_{0} : \eta(\partial f^{-1}(U))=0\}.
\end{align*}
This family is enough to study the topology of $\XX$.

\begin{lemma}\label{Lema005454c}
  The family $\cG_{0}$ generates the topology of $\XX$ (and
  also the Borel sets). Furthermore, if $f$ is continuous,
  then $\cG$ generates the topology of $\XX$.
\end{lemma}

\dem It is enough to show that for any given $x\in\XX$ and
any $r>0$, with $\partial B_{r}(x)\ne\emptyset$, there is a
sequence $r_{n}\nearrow r$ such that $\eta(\partial
(B_{r_{n}}(x)))=0$. 

We consider $T_{r,\epsilon}=\{s\in(r-\epsilon,r): \eta(\partial
B_s(x))>0\}$ for any fixed $0<\epsilon<r$. Then
$\cK_\epsilon:=\{\partial B_s(x) : s\in T_{r,\epsilon}\}$ is a
pairwise disjoint collection of compact subsets of $\XX$
with positive measure. From Corollary~\ref{Cor005454a} this
collection must be countable. Hence there exists
$s\in(r-\epsilon,r)$ such that $\eta(\partial
B_s(x))=0$. Since $\epsilon,r$ can be taken arbitrarily
close to zero, this proves what we need.

For a continuous $f$ we replace $B_s(x)$ by $f^{-1}(B_s(x))$
in the definition of $T_{r,\epsilon}$ and note that
$\partial \big(f^{-1}(B_s(x))\big)$ is compact since
$f^{-1}(B_s(x))$ is open. The same argument above applies in
this case and completes the proof.
\cqd

The pre-measure coincides with $\eta$ on open sets if $\eta$ is a
probability measure.

\begin{lemma}
  If $\eta$ is a probability, then
  $\eta(A)=\tau(A)$ for each $A\in\cG_{0}$.
\end{lemma}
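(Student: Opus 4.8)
If $\eta$ is a probability, then $\eta(A)=\tau(A)$ for each $A\in\cG_0$ (i.e. for each open $A$ with $\eta(\partial A)=0$).

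Here's how I'd plan the proof.

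---

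The plan is to sandwich $\eta(A)$ between two quantities that both equal $\tau(A)$, exploiting that $\eta$ is a probability so no mass is ``lost'' and the subadditivity of $\tau$ becomes an equality in the limit.

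\medskip

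\textbf{Step 1: The inequality $\tau(A)\le\eta(A)$ is not true in general --- wait, actually it is, via Lemma~\ref{Lema005454b}.}
For $A\in\cG_0$ we have $\eta(\partial A)=0$, so Lemma~\ref{Lema005454b} gives directly
\[
\tau(\overline{A})\le\eta(A),
\]
and since $A\subset\overline{A}$, monotonicity of $\tau$ (which follows from \eqref{eq9876567}, or rather from $\tau(A)\le\tau(A\cup B)$) yields $\tau(A)\le\tau(\overline A)\le\eta(A)$.

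\medskip

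\textbf{Step 2: The reverse inequality $\eta(A)\le\tau(A)$.}
This is where I'd use that $\eta(\XX)=1$. Write $X = A \sqcup \partial A \sqcup (\XX\setminus\overline A)$, where $B:=\XX\setminus\overline A$ is open and disjoint from $A$. Since $\eta(\partial A)=0$, we get $\eta(A)+\eta(B)=1$. Now I want to show $\tau(A)+\tau(B)\le 1$; combined with the analogue of Step~1 applied to $B$ --- but $B$ need not be in $\cG_0$, so I can't directly say $\tau(B)\ge\eta(B)$. Instead I'd argue: for \emph{any} two disjoint open sets $U,V$ one has $\xi_j(U)+\xi_j(V)\le\xi_j(\XX)=1$ for all $j$ by finite additivity of the outer measures $\xi_j$ (they're finitely additive on disjoint sets), hence averaging and taking $\limsup$, $\tau(U)+\tau(V)\le 1$ --- actually one must be slightly careful since $\limsup$ of a sum is $\le$ sum of $\limsup$s, so this gives $\tau(U\cup V)\le\tau(U)+\tau(V)$, not quite what I want. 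The cleaner route: for disjoint $U,V$, $\frac1n\sum_{j<n}(\xi_j(U)+\xi_j(V))\le 1$ for every $n$, so taking $\limsup$ of the left side (the $\limsup$ of a bounded sequence that is $\le 1$ for all $n$ is $\le 1$), and using that $\limsup(a_n+b_n)\ge\limsup a_n + \liminf b_n$, I'd have to pass through the $\liminf$. The most robust approach: use $\tau(U)\le 1-\liminf\frac1n\sum_{j<n}\xi_j(V)$ and separately relate $\eta(B)$ to a $\liminf$-version of the premeasure, or simply invoke that $\tau$ restricted to open sets with null boundary behaves well. Concretely, I would prove $\eta(A)\le\tau(A)$ as follows: cover $A$ by a single element? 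No --- $A$ is open but $A$ itself is an admissible ``cover'' only in the sense of one set if $\diam A\le r$. The right move is: $\eta(A)=\nu(A)=\sup_r\nu_r(A)$, and for each $r$, $\nu_r(A)\le\sum_i\tau(I_i)$ over covers; but to get an \emph{upper} bound on $\eta(A)$ by $\tau(A)$ I need to produce, for each $r$, a good cover of $A$ by small open sets whose $\tau$-values sum to at most $\tau(A)+\varepsilon$. Since $\tau$ is only finitely subadditive, breaking $A$ into many small pieces could \emph{increase} the sum (as Example~\ref{ex:notsummable} shows). So this direction genuinely needs the probability hypothesis globally.

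\medskip

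\textbf{Step 3: Use a complementary open set to close the loop.}
Fix $\delta>0$. By regularity of $\eta$ pick a compact $K\subset B=\XX\setminus\overline A$ with $\eta(K)>\eta(B)-\delta$. Enlarge $A$ slightly: since $\eta(\partial A)=0$, by Lemma~\ref{Lema005454a} applied to $\partial A$ (or rather to the boundary) pick $\varepsilon>0$ with $\tau(B_\varepsilon(\partial A))<\delta$ and small enough that $B_\varepsilon(\partial A)$ is disjoint from $K$. Set $A' = A\cup B_\varepsilon(\partial A)$, an open set containing $\overline A$, disjoint from $K$. Now $\overline A\subset A'$ so actually I want the \emph{other} direction; let me instead take $K\subset A$ compact with $\eta(K)>\eta(A)-\delta$, and an open $W\supset \XX\setminus A$ with $\tau$-controlled... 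Hmm. The key algebraic identity I want is: for disjoint compact $K$ and open $W$ with $K\cap W=\emptyset$ we can find disjoint open $U\supset K$, and $W$, so that $\tau(U)+\tau(W)\le\limsup\frac1n\sum(\xi_j(U)+\xi_j(W))\le 1$ --- and this last step \emph{is} legitimate because for disjoint open $U,W$, $\xi_j(U)+\xi_j(W)\le\xi_j(\XX)=1$ pointwise in $j$ (finite additivity/subadditivity of the outer measure on the \emph{disjoint union}), hence the running averages are $\le 1$, hence their $\limsup$ is $\le 1$, and $\limsup\frac1n\sum\xi_j(U)+\limsup\frac1n\sum\xi_j(W)$ could exceed this --- no! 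That's the problem again.

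\medskip

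\textbf{The honest fix, and the main obstacle.}
The main obstacle is exactly this non-additivity of $\limsup$: $\tau(U)+\tau(W)$ can exceed $\tau(U\cup W)$, so even knowing $\tau(U\cup W)\le 1$ doesn't bound $\tau(U)+\tau(W)$. The resolution must use $\eta$, not $\tau$, for the complement. Here is the plan I'd commit to: By Step~1, $\tau(A)\le\eta(A)$. For the reverse, I claim $\tau(A)\ge\eta(A)$ via: $1=\eta(\XX)\ge\eta(A)$ and I need $\tau(A)\ge 1-\eta(\XX\setminus A)$. Cover $\XX\setminus\overline A$ (which is open, with $\eta$-boundary in $\partial A$, null) and $A$ itself; since $\XX = A\cup\partial A\cup B$ with $\eta(\partial A)=0$, and by finite subadditivity of $\tau$: $1=\tau(\XX)$? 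We need $\tau(\XX)=1$: indeed $\tau(\XX)=\limsup\frac1n\sum\xi_j(\XX)=1$. Then $1=\tau(\XX)=\tau(A\cup\partial A\cup B)\le\tau(A)+\tau(\partial A)+\tau(B)$. Now $\tau(\partial A)$: since $\eta(\partial A)=0$, Lemma~\ref{Lema005454a} gives $\lim_{\varepsilon}\tau(B_\varepsilon(\partial A))=0$, so $\tau(\partial A)=0$. Thus $1\le\tau(A)+\tau(B)$. Finally I need $\tau(B)\le\eta(B)=\eta(\XX\setminus A)$; but $B=\XX\setminus\overline A$ is open with $\eta(\partial B)\le\eta(\partial A)=0$, so $B\in\cG_0$, and by Step~1 applied to $B$: $\tau(B)\le\tau(\overline B)\le\eta(B)$. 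Therefore $1\le\tau(A)+\eta(B)=\tau(A)+1-\eta(A)$ (using $\eta(A)+\eta(B)=1$, valid since $\eta(\partial A)=0$ and $\eta$ is a probability), giving $\eta(A)\le\tau(A)$. Combined with Step~1, $\eta(A)=\tau(A)$.

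\medskip

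So the proof is: (i) $\tau(\XX)=1$ and $\tau(\partial A)=0$ (from Lemma~\ref{Lema005454a}, since $\eta(\partial A)=0$); (ii) finite subadditivity \eqref{eq9876567} on $\XX=A\cup\partial A\cup(\XX\setminus\overline A)$ gives $1\le\tau(A)+\tau(\XX\setminus\overline A)$; (iii) $\XX\setminus\overline A\in\cG_0$, so Lemma~\ref{Lema005454b} gives $\tau(\XX\setminus\overline A)\le\eta(\XX\setminus\overline A)=1-\eta(A)$; (iv) combine to get $\eta(A)\le\tau(A)$; (v) Lemma~\ref{Lema005454b} again gives $\tau(A)\le\tau(\overline A)\le\eta(A)$. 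I expect step (ii)'s bookkeeping --- checking $\partial(\XX\setminus\overline A)\subset\partial A$ and that $\tau$ is monotone and finitely subadditive on these particular sets --- to be the only place requiring care, and it is routine.
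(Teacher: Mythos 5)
Your final committed argument (steps (i)--(v)) is correct and is essentially the paper's proof: one direction comes from Lemma~\ref{Lema005454b} applied to $A$, and the reverse comes from the ``probability squeeze'' combining $\tau(\XX)=1$, finite subadditivity of $\tau$ over $\XX=A\cup\partial A\cup(\XX\setminus\overline A)$ with $\tau(\partial A)=0$, Lemma~\ref{Lema005454b} applied to $\XX\setminus\overline A\in\cG_0$, and additivity of the probability $\eta$. The paper phrases this squeeze as a contradiction rather than a direct chain, but the content is identical, and your diagnosis that non-additivity of $\limsup$ forces the detour through $\eta$ on the complement is exactly the point.
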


\dem We assume that $\eta$ is a probability. As
$A\in\cG_{0}\iff\XX\setminus\overline{A}\in\cG_{0}$, it
follows from lemma~\ref{Lema005454b} that $\tau(\XX\setminus
A) = \tau(\overline{\XX\setminus\overline{A}}) \le
\eta(\XX\setminus\overline{A}) \le \eta(\XX\setminus A)$ for
every $A\in\cG_{0}$. Hence $\tau(A)=
\tau(\XX\setminus(\XX\setminus A))
\le\eta(\XX\setminus(\XX\setminus A))=\eta(A)$.

Now we assume, by contradiction, that there is some
$A\in\cG_{0}$ such that $\tau(A)<\eta(A)$. As
$\tau(\XX\setminus A)\le\eta(\XX\setminus A)$, we get
$1=\tau(\XX)\le\tau(A)+\tau(\XX\setminus
A)<\eta(A)+\eta(\XX\setminus A)=\eta(\XX)=1$, a
contradiction, completing the proof. \cqd


Consequently, $\eta$ dominates the pre-measure of compacta.

\begin{corollary}\label{Cor003463}
  If $\eta$ is a probability, then $\tau(K)\le\eta(K)$ for every
  closed set $K\subset\XX$.
\end{corollary}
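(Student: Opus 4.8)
The plan is to deduce Corollary~\ref{Cor003463} from Lemma~\ref{Lema005454b} by a complementation argument, exactly parallel to the first paragraph of the proof of the preceding lemma, but now only needing an inequality in one direction. Given a closed set $K\subset\XX$, the first step is to pass to its complement $\XX\setminus K$, which is open. The obstruction to applying Lemma~\ref{Lema005454b} directly is that $\eta(\partial(\XX\setminus K))$ need not vanish, so the main technical point is to approximate $K$ from outside by open sets whose boundaries are $\eta$-null.

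Concretely, first I would use outer regularity of $\eta$ (valid since $\eta$ is a finite Borel measure on a compact metric space) to fix, for each $\delta>0$, an open set $U\supset K$ with $\eta(U)<\eta(K)+\delta$. Then, shrinking $U$ slightly, I would produce an open set $V$ with $K\subset V\subset\overline V\subset U$ and $\eta(\partial V)=0$: this last step is the heart of the argument and is supplied by the same mechanism used in Lemma~\ref{Lema005454c}, namely that the boundaries $\partial\{x:\dist(x,K)<s\}$ over a parameter range form an uncountable pairwise disjoint family of compact sets, so by Corollary~\ref{Cor005454a} all but countably many have $\eta$-measure zero; hence one can choose $s$ small enough that $V=\{x:\dist(x,K)<s\}\subset U$ and $\eta(\partial V)=0$. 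With $V\in\cG_0$ in hand and $K\subset V$, monotonicity of $\tau$ gives $\tau(K)\le\tau(V)$, and I would still need $\tau(V)\le\eta(V)$. Here the cleanest route is to invoke the preceding Lemma (the one asserting $\eta(A)=\tau(A)$ for $A\in\cG_0$ when $\eta$ is a probability), applied to $V$, giving $\tau(V)=\eta(V)\le\eta(U)<\eta(K)+\delta$.

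Combining these, $\tau(K)\le\tau(V)=\eta(V)\le\eta(U)<\eta(K)+\delta$ for every $\delta>0$, whence $\tau(K)\le\eta(K)$, as claimed. The main obstacle is the construction of the good approximating open set $V$ with $\eta$-null boundary; once one recognizes that the metric-sphere trick from Lemma~\ref{Lema005454c} applies verbatim to the ``tubes'' $\{x:\dist(x,K)<s\}$ around an arbitrary closed set $K$ (their boundaries being compact, pairwise disjoint, and hence all but countably many $\eta$-null by Corollary~\ref{Cor005454a}), the rest is just monotonicity of $\tau$ together with the already-established identity $\tau|_{\cG_0}=\eta|_{\cG_0}$ and outer regularity of the finite Borel measure $\eta$. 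An alternative, slightly more self-contained finish avoids citing the previous lemma: one applies Lemma~\ref{Lema005454b} to the open set $A=\XX\setminus\overline{V'}$ for a suitable open $V'$ with $K\subset V'\subset\overline{V'}\subset V$ and $\eta(\partial V')=0$, obtaining $\tau(\XX\setminus\overline{V'})\le\eta(\XX\setminus\overline{V'})$, and then complements using $\tau(\XX)=\eta(\XX)=1$ to get $\tau(\overline{V'})\ge? $ — but this reverses the inequality, so the direct route through $\tau|_{\cG_0}=\eta|_{\cG_0}$ is preferable and I would present that one.
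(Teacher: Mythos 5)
Your proof is correct and follows essentially the same route as the paper: the paper chooses $\varepsilon_n\to0$ with $\eta(\partial B_{\varepsilon_n}(K))=0$ (the same disjoint-compact-boundaries argument via Corollary~\ref{Cor005454a}) and concludes $\tau(K)\le\tau(B_{\varepsilon_n}(K))=\eta(B_{\varepsilon_n}(K))\to\eta(K)$, using the identity $\tau=\eta$ on $\cG_0$ exactly as you do. The only cosmetic difference is that you invoke outer regularity to squeeze the neighborhood inside a prescribed open $U$, whereas the paper uses continuity from above of $\eta$ along $B_{\varepsilon_n}(K)\downarrow K$; these are interchangeable.
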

\dem Choose $\varepsilon_{n}\to0$ so that $\eta(\partial
B_{\varepsilon_{n}}(K))=0$ for each $n\in\NN$. Thus
$\tau(K)\le\tau(B_{\varepsilon_{n}}(K))=\eta(B_{\varepsilon_{n}}(K))\to\eta(K)$.
\cqd

\begin{definition}\label{def:integra}
  Given $(\xi_j)_{j\in\NN}$ as before and $\varphi:\XX\to\RR$ a
  bounded measurable function, we set
  \begin{align*}
    \tau(\varphi):=
    \limsup_{n\to+\infty}
    \frac1n\sum_{j=0}^{n-1}\xi_i(\varphi)
    =
    \limsup_{n\to+\infty}\frac1n
    \sum_{j=0}^{n-1}\int\varphi \,d\xi_j.
  \end{align*}
\end{definition}
Clearly $\tau(\varphi+\psi)\le\tau(\varphi)+\tau(\psi)$ and so,
$\sum_{j=1}^{k}\tau(\phi_{n})\ge\tau(\sum_{j=1}^{k}\phi_{n})$ for any
finite collection of functions
$\varphi,\psi,\phi_1,\dots,\phi_k:\XX\to\RR$. We need the following
result which can be found in any standard textbook on measure and
integration; see e.g.~\cite{Ru21}.

\begin{lemma}\label{Lema05633}
  Let $\mu$ be a finite regular measure. If
  $\varphi:\XX\to\RR$ is a continuous function, then there
  exists a sequence of simple function
  $\varphi_{n}=\sum_{j=1}^{\ell_{n}}a_{j,n}\chi_{A_{j,n}}$
  such that $A_{j,n}\in\cG_{0}$, the $(A_{j,n})_j$ are
  pairwise disjoint for each $n$,
  $\varphi_{n}(x)\searrow\varphi$ for $\mu$-almost all
  $x\in\XX$ and also $\int\varphi_{n}d\mu\searrow\int\varphi
  d\mu$.
\end{lemma}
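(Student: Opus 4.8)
The plan is to approximate a continuous $\varphi$ from above by simple functions whose level sets have $\mu$-null boundaries. First I would fix a large constant $M$ with $\|\varphi\|_\infty<M$, and for each $n$ partition an interval $[-M,M]$ containing the range of $\varphi$ into finitely many pieces of length $<1/n$ using cut values $t_{0,n}<t_{1,n}<\dots<t_{\ell_n,n}$. The naive choice would set $A_{j,n}=\varphi^{-1}\big((t_{j-1,n},t_{j,n}]\big)$, but these sets need not be open, nor have null boundary. Instead I would use the sets $U_{j,n}=\varphi^{-1}\big((-\infty,t_{j,n})\big)$, which are open by continuity of $\varphi$, and take differences $U_{j,n}\setminus U_{j-1,n}$; to make these lie in $\cG_0$ I need to choose the cut values $t_{j,n}$ so that $\eta(\partial U_{j,n})=0$, i.e. $\eta\big(\varphi^{-1}(\{t_{j,n}\})\big)=0$ after noting $\partial\varphi^{-1}((-\infty,t))\subset\varphi^{-1}(\{t\})$.

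The key step is the selection of good cut values. The sets $\{\varphi^{-1}(\{t\})\}_{t\in\RR}$ form an uncountable pairwise disjoint family of compact subsets of $\XX$; by Corollary~\ref{Cor005454a} (applied with the premeasure defining $\eta$), at most countably many of them have positive $\eta$-measure, and likewise at most countably many have positive $\mu$-measure since $\mu$ is a finite measure. Hence outside a countable set of ``bad'' levels $t$, we have both $\eta\big(\varphi^{-1}(\{t\})\big)=0$ and $\mu\big(\varphi^{-1}(\{t\})\big)=0$. So in each subinterval of the partition I can perturb the cut value $t_{j,n}$ slightly to a good level, keeping all gaps below $1/n$. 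With these choices, setting $A_{j,n}=U_{j,n}\setminus\overline{U_{j-1,n}}$ (or more carefully $\varphi^{-1}((t_{j-1,n},t_{j,n}))$ together with an adjustment for the finitely many fibers of cut values), the $A_{j,n}$ are open, pairwise disjoint for fixed $n$, have $\eta$-null boundary hence lie in $\cG_0$, and define $\varphi_n=\sum_j t_{j,n}\chi_{A_{j,n}}$, which is a simple function dominating $\varphi$ with $\varphi\le\varphi_n\le\varphi+1/n$ except possibly on the $\mu$-null union of the chosen fibers.

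To get the monotone decrease $\varphi_n\searrow\varphi$ rather than mere convergence, I would build the partitions to be successively refined: take the partition at stage $n+1$ to contain all the cut values of stage $n$ plus new good cut values interleaved to halve the mesh. Then for $\mu$-a.e.\ $x$ (all $x$ outside the countable union of the chosen fibers, which is $\mu$-null), $\varphi_n(x)$ is the right endpoint of the partition interval containing $\varphi(x)$, a nonincreasing sequence converging down to $\varphi(x)$. Finally $\int\varphi_n\,d\mu\searrow\int\varphi\,d\mu$ follows from monotone (or dominated) convergence, since $|\varphi_n|\le M$ and $\mu$ is finite.

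The main obstacle I expect is the bookkeeping needed to simultaneously guarantee (i) openness of the $A_{j,n}$, (ii) the null-boundary condition placing them in $\cG_0$, (iii) pairwise disjointness, and (iv) genuine monotonicity $\varphi_n\searrow\varphi$ — all while only altering $\varphi$ on a $\mu$-null set. Openness together with a partition of $\XX$ is impossible if $\XX$ is connected, so the $A_{j,n}$ cannot cover $\XX$; this is why $\varphi_n$ can only equal a simple function built on open sets off the exceptional fibers, and one must check that this $\mu$-null discrepancy is harmless for the three desired conclusions. The two applications of Corollary~\ref{Cor005454a} (for $\eta$) and of the finiteness of $\mu$ (for $\mu$) are what make the required good cut values abundant enough to carry out the refinement, so the proof is not hard once this is set up, but the statement as written — with an honest ``$\searrow$'' and ``$\in\cG_0$'' — requires care that the covering and monotonicity are arranged consistently.
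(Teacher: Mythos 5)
Your proof is correct. Note that the paper does not actually prove this lemma --- its ``proof'' is a pointer to a standard textbook --- and the textbook version of simple-function approximation does not cover the one genuinely non-standard requirement here, namely that the sets $A_{j,n}$ be open with $\eta$-null boundary, i.e.\ lie in $\cG_{0}$. Your argument supplies exactly that missing ingredient: the fibers $\varphi^{-1}(\{t\})$ form a pairwise disjoint family of compact sets, so Corollary~\ref{Cor005454a} makes all but countably many of them $\eta$-null, while finiteness of $\mu$ makes all but countably many $\mu$-null; hence good cut levels are dense, the inclusion $\partial\varphi^{-1}\big((s,t)\big)\subset\varphi^{-1}(\{s\})\cup\varphi^{-1}(\{t\})$ places $A_{j,n}=\varphi^{-1}\big((t_{j-1,n},t_{j,n})\big)$ in $\cG_{0}$, and nesting the successive partitions of the range gives the monotone convergence $\varphi_{n}\searrow\varphi$ off the countable (hence $\mu$-null) union of chosen fibers, with $\int\varphi_{n}\,d\mu\searrow\int\varphi\,d\mu$ by dominated convergence. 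Your closing caution is also well placed: on a connected $\XX$ the open sets $A_{j,n}$ cannot partition $\XX$, so $\varphi_{n}\ge\varphi$ and the monotonicity only hold off the exceptional fibers, which is precisely why the lemma is stated $\mu$-almost everywhere. In short, your write-up is more complete than the paper's, and it is the argument the paper implicitly needs.
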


Next we show that $\tau(\vfi)$ is the $\eta$-integral of $\vfi$ is
$\eta$ is a probability.

\begin{lemma}\label{Lema0562572}
  If $\eta$ is a probability, then $\int\varphi\,
  d\eta=\tau(\varphi)$ for each continuous function
  $\varphi:\XX\to\RR$.
\end{lemma}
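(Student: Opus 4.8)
I would prove $\int\varphi\,d\eta = \tau(\varphi)$ for continuous $\varphi$ by first establishing it for the simple functions built from sets in $\cG_0$, and then passing to the limit using Lemma~\ref{Lema05633}. The starting point is the equality $\eta(A)=\tau(A)$ for $A\in\cG_0$ (the unnumbered lemma just above the target statement, valid since $\eta$ is a probability). By additivity of $\eta$ and the countable subadditivity $\tau(\bigcup A_j)\le\sum\tau(A_j)$ together with the superadditivity observed after Definition~\ref{def:premeasure} for finitely many sets, one gets: for a simple function $\psi=\sum_{j=1}^{\ell}a_j\chi_{A_j}$ with $a_j\ge 0$ and the $A_j\in\cG_0$ pairwise disjoint, $\int\psi\,d\eta=\sum_j a_j\eta(A_j)=\sum_j a_j\tau(A_j)$, and this last quantity should match $\tau(\psi)$. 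Here I must be a little careful: $\tau$ of a function satisfies only $\tau(\varphi+\psi)\le\tau(\varphi)+\tau(\psi)$ in general, but for a sum of indicator functions over \emph{disjoint} sets, $\sum_j a_j\chi_{A_j}(x)$ has at each $x$ only one nonzero term, so the Birkhoff-type averages split cleanly and one gets genuine equality $\tau(\sum a_j\chi_{A_j})=\sum a_j\tau(A_j)$ — this is the first small lemma to nail down.

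Next I would take a continuous $\varphi$ and apply Lemma~\ref{Lema05633} with $\mu=\eta$ (which is a finite regular measure, being a probability) to obtain simple functions $\varphi_n=\sum_{j} a_{j,n}\chi_{A_{j,n}}$ with $A_{j,n}\in\cG_0$, pairwise disjoint for each $n$, $\varphi_n\searrow\varphi$ $\eta$-a.e., and $\int\varphi_n\,d\eta\searrow\int\varphi\,d\eta$. To keep coefficients nonnegative one may first replace $\varphi$ by $\varphi+c$ for a large constant $c$; since $\eta(\XX)=1$ and $\tau(\varphi+c)=\tau(\varphi)+c$ (constants pass through the $\limsup$ of averages exactly, because $\xi_j(\XX)=1$ for all $j$), the identity for $\varphi+c$ is equivalent to the one for $\varphi$. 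Then from the previous paragraph $\int\varphi_n\,d\eta=\tau(\varphi_n)$ for every $n$, so it remains to show $\tau(\varphi_n)\to\tau(\varphi)$.

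The one-sided estimate $\tau(\varphi)\le\tau(\varphi_n)$ for every $n$ is immediate from $\varphi\le\varphi_n$ pointwise (up to an $\eta$-null set — but since $\varphi$ is continuous and $\varphi_n$ is the standard dyadic truncation, in fact $\varphi\le\varphi_n$ holds everywhere in the usual construction, so monotonicity of $\limsup$ of the averages gives it directly). Hence $\tau(\varphi)\le\inf_n\tau(\varphi_n)=\lim_n\tau(\varphi_n)$. For the reverse inequality $\lim_n\tau(\varphi_n)\le\tau(\varphi)$, I would write $\varphi_n=\varphi+(\varphi_n-\varphi)$ and use subadditivity: $\tau(\varphi_n)\le\tau(\varphi)+\tau(\varphi_n-\varphi)$, so it suffices to show $\tau(\varphi_n-\varphi)\to0$. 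Now $0\le\varphi_n-\varphi\le 1/2^n$ uniformly for the dyadic construction (or, more robustly, one arranges $\varphi_n-\varphi\le\epsilon_n\to0$ uniformly, which is exactly what the standard construction of Lemma~\ref{Lema05633} gives since $\varphi$ is continuous on a compact space hence uniformly approximable), whence $\tau(\varphi_n-\varphi)\le\epsilon_n\to0$ because $\frac1n\sum_{j=0}^{n-1}\xi_j(\epsilon_n)=\epsilon_n$. Combining the two inequalities yields $\tau(\varphi_n)\to\tau(\varphi)$, and passing to the limit in $\int\varphi_n\,d\eta=\tau(\varphi_n)$ finishes the proof.

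**Main obstacle.** The delicate point is making sure Lemma~\ref{Lema05633}'s approximating sequence is simultaneously (i) built from sets in $\cG_0$, (ii) decreasing, (iii) converging \emph{uniformly} (not just $\eta$-a.e.) to $\varphi$ — the uniform bound $\varphi_n-\varphi\le\epsilon_n$ is what lets me control the \emph{non-monotone} functional $\tau(\cdot)$ on functions, since $\tau$ does not respect a.e.-equality or monotone limits the way a genuine integral does. For a continuous $\varphi$ on compact $\XX$ the standard dyadic construction does give uniform convergence, so this is not a serious gap, but it is the hinge of the argument and should be stated explicitly rather than just cited. A secondary bookkeeping point is the reduction to nonnegative $\varphi$ (equivalently nonnegative coefficients $a_{j,n}$), handled by the harmless shift $\varphi\mapsto\varphi+c$ together with $\tau(\varphi+c)=\tau(\varphi)+c$ and $\int(\varphi+c)\,d\eta=\int\varphi\,d\eta+c$.
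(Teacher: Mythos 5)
Your architecture differs from the paper's: you aim for an exact identity $\int\varphi_n\,d\eta=\tau(\varphi_n)$ at the level of simple functions and then pass to the limit, whereas the paper proves only the one-sided inequality $\int\varphi\,d\eta\ge\tau(\varphi)$ (via the superadditivity $\sum_j\tau(\phi_j)\ge\tau(\sum_j\phi_j)$ and monotone approximation) and then upgrades it to an equality by applying that inequality to $1-\varphi$ and using $\tau(1)=1=\eta(\XX)$. The difference matters because your route has a genuine gap exactly at its hinge: the claim that $\tau\big(\sum_j a_j\chi_{A_j}\big)=\sum_j a_j\tau(A_j)$ for pairwise disjoint $A_j\in\cG_0$. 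Your justification --- at each $x$ only one term is nonzero, so the averages split cleanly --- only shows that $\frac1n\sum_{i<n}\xi_i\big(\sum_j a_j\chi_{A_j}\big)=\sum_j a_j\cdot\frac1n\sum_{i<n}\xi_i(A_j)$ for each fixed $n$. Taking $\limsup_n$ then yields only $\tau\big(\sum_j a_j\chi_{A_j}\big)\le\sum_j a_j\tau(A_j)$, which is the direction you do \emph{not} need: the $\limsup$ of a finite sum of bounded sequences can be strictly smaller than the sum of the $\limsup$s when the sequences oscillate out of phase, and out-of-phase oscillation of visit frequencies to disjoint sets is precisely the phenomenon (historic behavior) this whole construction is designed to capture. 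Without the reverse inequality $\sum_j a_j\tau(A_j)\le\tau(\varphi_n)$ you only recover $\int\varphi_n\,d\eta\ge\tau(\varphi_n)$, hence only $\int\varphi\,d\eta\ge\tau(\varphi)$, which is half the lemma.

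The gap is repairable, but the repair is where all the content lives and it must use the hypothesis that $\eta$ is a probability. One way: show first that for $A\in\cG_0$ the limit $\lim_n\frac1n\sum_{i<n}\xi_i(A)$ actually exists and equals $\eta(A)$. Indeed $\liminf_n\frac1n\sum_{i<n}\xi_i(A)=1-\limsup_n\frac1n\sum_{i<n}\xi_i(\XX\setminus A)=1-\tau(\XX\setminus A)$; since $\XX\setminus A\subset\ov{\XX\setminus\ov{A}}\cup\partial A$ with $\tau(\partial A)=0$ (by Lemma~\ref{Lema005454a}, as $\partial A$ is compact and $\eta(\partial A)=0$) and $\tau(\ov{\XX\setminus\ov{A}})\le\eta(\XX\setminus\ov{A})$ (Lemma~\ref{Lema005454b}), one gets $\liminf\ge 1-\eta(\XX\setminus A)=\eta(A)=\tau(A)=\limsup$. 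Once the limits exist, additivity over disjoint sets is legitimate and the rest of your argument (the shift by a constant, the uniform dyadic approximation with $0\le\varphi_n-\varphi\le\epsilon_n$, and subadditivity to get $\tau(\varphi_n)\to\tau(\varphi)$) goes through; note, though, that this complement argument is exactly the paper's ``$1-\varphi$'' trick pushed down to the level of sets, so the two proofs ultimately rest on the same mechanism. A secondary point to make explicit: $\varphi\le\varphi_n$ holds only off the union of the boundaries $\partial A_{j,n}$, so to deduce $\tau(\varphi)\le\tau(\varphi_n)$ you should bound $\varphi\le\varphi_n+\|\varphi\|_\infty\chi_{E_n}$ with $E_n=\bigcup_j\partial A_{j,n}$ and use $\tau(E_n)=0$; mere $\eta$-a.e.\ domination does not control the functional $\tau$.
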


In particular, we obtain that $\tau=\eta$ is a Borel
probability measure.

\begin{proof}[Proof of Lemma~\ref{Lema0562572}]
  First we show that $\int\varphi
  d\eta\ge\tau(\varphi)$ for every continuous
  function $\varphi$. 

  Indeed, if $\varphi:\XX\to\RR$ is a continuous function,
  let $\varphi_{n}=\sum_{j}a_{j,n}\chi_{A_{j,n}}$ be given
  by Lemma~\ref{Lema05633}. Let $a=\inf\varphi$,
  $\psi=\varphi-a$ and
  $\psi_{n}=\sum_{j}(a_{j,n}-a)\chi_{A_{j,n}}$. Note that
  $\psi, \psi_{n}$ and $(a_{j,n}-a)\chi_{A_{j,n}}$ are
  nonnegative functions and $\psi_{n}\searrow\psi$. Since
  $A_{j,n}\in\cG_{0}$ we get $\eta(A_{j,n})=\tau(A_{j,n})$
  $\forall\,j,n$. Thus, using that $(a_{j,n}-a)\ge0$ we
  have
  \begin{align*}
    \int\psi_{n}\,d\eta
    &=
    \sum_{j=1}^{\ell_{n}}(a_{j,n}-a)\eta(A_{j,n})
    =
    \sum_{j=1}^{\ell_{n}}(a_{j,n}-a)\tau(A_{j,n})
    \\
    &\ge\tau\bigg(\sum_{j=1}^{\ell_{n}}(a_{j,n}-a)\chi_{A_{j,n}}\bigg)
    =\tau(\psi_{n}).
\end{align*}
As $\psi_{n}\ge\psi$ implies that
$\tau(\psi_{n})\ge\tau(\psi)$ and since $\int\psi_{n}
d\eta\to\int\psi d\eta$, we get
\begin{align*}
  \int\varphi \, d\eta - a
  =\int\psi \, d\eta
  \ge
  \tau(\psi)
  =
  \tau(\varphi)-a.
\end{align*}
That is, $\int\varphi d\eta\ge\tau(\varphi)$ for every
continuous function $\varphi$.

Now, suppose that $\int\varphi d\eta>\tau(\varphi)$ for some
continuous function $\varphi$. As $1-\varphi$ is also a
continuous function
$\int(1-\varphi)\,d\eta\ge\tau(1-\varphi)$.  Therefore
\begin{align*}
  1
  &=
  \int1 \, d\eta
  =
  \int((1-\varphi)+\varphi)\,d\eta
  =
  \int(1-\varphi)\,d\eta+\int\varphi\,d\eta
  \\
  &>
  \tau(1-\varphi)+\tau(\varphi)
  \ge
  \tau((1-\varphi)+\varphi)=\tau(1)=1.
\end{align*}
This contradiction completes the proof.
\end{proof}

We note that up to this point we have not used any
invariance relation for the measures $\tau$ or $\eta$.

\subsection{The invariant case}
\label{sec:invari-case}

Now we assume that $\tau(f^{-1}(A))=\tau(A)$ for all Borel
subsets $A$ of $\XX$. This depends on the choice of the
sequence $(\xi_i)_{i\in\NN}$ and must be checked for each
specific case.

\begin{lemma}\label{le:tauetainv}
  Let $\tau$ be $f$-invariant: $\tau\circ f^{-1}=\tau$.
  If 
  $\eta$ is a probability measure, then $\eta$ is $f$-invariant.
\end{lemma}

\begin{proof}
  By assumption, we have both $\tau\circ f^{-1}=\tau$ and
  $\eta=\tau$ on the Borel $\sigma$-algebra, thus $\eta$ is
  $f$-invariant.
\end{proof}


\begin{corollary}\label{corollary0562572}
  The following properties are equivalent:
\begin{enumerate}
\item $\eta$ is a probability;
\item
  $\lim_{n\to\infty}\frac{1}{n}\sum_{j=0}^{n-1}\xi_j(\varphi)=\int\varphi
  \,d\eta$ for each continuous function $\vfi$;
\item
  $\lim_{n\to\infty}\frac{1}{n}\sum_{j=0}^{n-1}\xi_j(\varphi)$ exists for each continuous function $\varphi$.
\end{enumerate}
\end{corollary}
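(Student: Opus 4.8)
The plan is to prove the three equivalences in a cycle $(1)\Rightarrow(2)\Rightarrow(3)\Rightarrow(1)$, leaning on the lemmas already established for the premeasure $\tau$ versus the measure $\eta$.

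\textbf{From (1) to (2).} Assume $\eta$ is a probability. This is exactly the hypothesis of Lemma~\ref{Lema0562572}, which gives $\int\varphi\,d\eta=\tau(\varphi)=\limsup_n\frac1n\sum_{j=0}^{n-1}\xi_j(\varphi)$ for every continuous $\varphi$. Applying this also to $-\varphi$ (which is continuous) yields $\int(-\varphi)\,d\eta=\limsup_n\frac1n\sum_{j=0}^{n-1}\xi_j(-\varphi)=-\liminf_n\frac1n\sum_{j=0}^{n-1}\xi_j(\varphi)$, so $\liminf_n\frac1n\sum_{j=0}^{n-1}\xi_j(\varphi)=\int\varphi\,d\eta$ as well. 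Hence the $\limsup$ and $\liminf$ agree, the limit exists, and it equals $\int\varphi\,d\eta$; this is (2).

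\textbf{From (2) to (3).} This is immediate: (2) asserts that the limit exists (and names its value), so in particular the limit exists for every continuous $\varphi$, which is (3).

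\textbf{From (3) to (1).} This is the step I expect to be the main obstacle, since it must go from pointwise convergence of Birkhoff-type averages back to the structural statement that the Carath\'eodory measure $\eta$ has total mass one. The strategy: first note $\eta(\XX)\le\tau(\XX)=1$ always (by subadditivity of $\nu$ applied to a cover of $\XX$, or directly from Definition~\ref{def:charmeasureII} with the trivial cover $\{\XX\}$ and $\tau(\XX)=\limsup\frac1n\sum\xi_j(\XX)=1$). So it remains to rule out $\eta(\XX)<1$. Suppose for contradiction $\eta(\XX)=1-c$ with $c>0$. Using Lemma~\ref{Lema005454c} (with $\XX$ perfect — note this Corollary sits in the ``invariant case'' subsection where $\XX$ perfect is in force, or one invokes it through $\cG_0$ generating the Borel sets) choose a finite cover of $\XX$ by sets $A_1,\dots,A_s\in\cG_0$ of small diameter; by the immediately preceding Lemma one can even take them so that $\bigcup\overline{A_i}=\XX$ and $\sum_i\tau(A_i)\le\eta(\XX)+\varepsilon<1$ for $\varepsilon$ small. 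Now build a continuous function adapted to this cover — e.g. take a continuous partition of unity $\{\rho_i\}$ subordinate to $\{A_i\}$ and consider $\varphi=\sum_i \rho_i$, which is identically $1$; more usefully, pick $\varphi$ piecewise comparing to $\sum_i c_i\chi_{A_i}$ so that $\tau(\varphi)$ is controlled by $\sum_i c_i\tau(A_i)$ while $\int\varphi\,d\eta$ is controlled by $\sum_i c_i\eta(A_i)$. The key identity to exploit is that, when the limit exists, $\tau(\varphi)=\int\varphi\,d\eta$ forces $\limsup=\liminf$; running the argument in Lemma~\ref{Lema0562572} (which only used $\eta(A)=\tau(A)$ on $\cG_0$, itself a consequence of $\eta$ being a probability) in reverse requires instead that we derive $\eta(A)=\tau(A)$ on $\cG_0$ from (3). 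Concretely: for $A\in\cG_0$, approximate $\chi_A$ from above and below by continuous functions $\varphi^{\pm}$ with $\int(\varphi^+-\varphi^-)\,d\eta$ small (possible because $\eta(\partial A)=0$); since by (3) $\tau(\varphi^\pm)=\liminf$-limit of the averages, one gets $\tau(\varphi^+)+\tau(-\varphi^+)=0$ and likewise for $\varphi^-$, and then $\tau(A)\le\tau(\varphi^+)=\int\varphi^+\,d\eta$ and $\tau(\XX\setminus A)\le\tau(1-\varphi^-)=1-\int\varphi^-\,d\eta$; adding and using $\tau(\XX)=\tau(A)+\tau(\XX\setminus A)\ge\tau(A\cup(\XX\setminus A))=1$ pins down $\tau(A)=\int\varphi^+\,d\eta$ up to the small error, hence $\tau(A)=\eta(A)$ by letting the approximation improve. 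Once $\tau=\eta$ on $\cG_0$ and $\cG_0$ generates the Borel sets (Lemma~\ref{Lema005454c}), summing over a finite cover by $\cG_0$-sets of total $\tau$-mass $1$ and total $\eta$-mass $\eta(\XX)$ forces $\eta(\XX)=1$, so $\eta$ is a probability, proving (1). The delicate point throughout is the two-sided control: (3) gives only \emph{existence} of the limit, so every inequality $\tau(\varphi)\le\text{something}$ must be paired with the companion inequality obtained by applying the hypothesis to $-\varphi$ or to $1-\varphi$, exactly as in the second half of the proof of Lemma~\ref{Lema0562572}.
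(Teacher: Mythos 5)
Your implications $(1)\Rightarrow(2)$ and $(2)\Rightarrow(3)$ are correct and match the paper. The gap is in $(3)\Rightarrow(1)$, and it begins with your very first assertion: the claim that $\eta(\XX)\le\tau(\XX)=1$ ``directly from Definition~\ref{def:charmeasureII} with the trivial cover $\{\XX\}$'' is false. In the Method II construction the admissible covers in $\cA(r,\XX)$ must have $\diam(I_i)\le r$, so the trivial cover is excluded once $r<\diam(\XX)$, and since $\tau$ is only \emph{finitely} subadditive, passing to fine covers does not keep the infimum below $\tau(\XX)$. Compactness in fact gives the opposite bound: every countable open cover of $\XX$ has a finite subcover $\cI'$ with $1=\tau(\XX)\le\sum_{I\in\cI'}\tau(I)$, hence $\nu_r(\XX)\ge1$ for all $r$ and $\eta(\XX)\ge1$ \emph{always}. (This is exactly why historic points satisfy $\eta_x(\XX)>1$; in Example~\ref{ex:Boweneye} the paper computes $\eta_{x_0}(\XX)=\frac{\sigma}{1+\sigma}+\frac{\lambda}{1+\lambda}>1$.) Your contradiction hypothesis $\eta(\XX)=1-c$ with $c>0$ is therefore vacuous, and the whole step aims at the wrong side of the inequality: what hypothesis (3) must rule out is $\eta(\XX)>1$, not $\eta(\XX)<1$. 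A secondary problem is that your mechanism for getting $\tau=\eta$ on $\cG_{0}$ invokes $\tau(\varphi^{+})=\int\varphi^{+}\,d\eta$, which is Lemma~\ref{Lema0562572} and is only available once $\eta$ is known to be a probability --- precisely what is being proved --- so the argument is circular as written.

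For comparison, the paper's route for $(3)\Rightarrow(1)$ is: suppose $\eta(\XX)>1$ and fix $r$ with $\nu_{r}(\XX)>1$; choose finitely many pairwise disjoint $P_{1},\dots,P_{s}\in\cG_{0}$ of diameter less than $r/2$ with $\bigcup_{j}\overline{P_{j}}=\XX$; since $\eta(\partial P_{j})=0$, Lemma~\ref{Lema005454a} gives $\tau(P_{j})=\lim_{\varepsilon\to0}\tau(B_{\varepsilon}(P_{j}))$, and because the $B_{\varepsilon}(P_{j})$ form an admissible cover one gets $\sum_{j}\tau(P_{j})\ge\nu_{r}(\XX)>1$. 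Hypothesis (3) enters only to upgrade $\tau$ on $\cG_{0}$ from a $\limsup$ to a genuine limit of $\frac1n\sum_{i<n}\xi_{i}(\chi_{A})$, which makes $\tau$ finitely additive on the disjoint sets $P_{j}$, so $\sum_{j}\tau(P_{j})=\tau\bigl(\bigcup_{j}P_{j}\bigr)\le1$, a contradiction. No identification of $\tau$ with $\eta$ on $\cG_{0}$ is needed for this direction.
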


\begin{proof}
  First we assume that $\eta$ is a probability and let
  $\varphi\in C(\XX,\RR)$. By Lemma~\ref{Lema0562572}
  applied to $-\varphi$ we get
  \begin{align*}
    \int\varphi \,d\eta
    &=
    -\bigg(\int-\varphi\,d\eta\bigg)
    =
    -\bigg(\limsup_{n\to\infty}\frac{1}{n}\sum_{j=0}^{n-1}\xi_j(-\varphi)
    \bigg)
    \\
    &=
    -\bigg(-\liminf_{n\to\infty}\frac{1}{n}\sum_{j=0}^{n-1}\xi_j(\varphi)
    \bigg)
    =
    \liminf_{n\to\infty}\frac{1}{n}\sum_{j=0}^{n-1}\xi_j(\varphi).
\end{align*}
On the other hand, applying Lemma~\ref{Lema0562572} to
$\varphi$, we obtain
\begin{align*}
  \int\varphi d\eta = \tau(\vfi)=
  \limsup_{n\to\infty}\frac{1}{n}\sum_{j=0}^{n-1}\xi_j(\varphi).
\end{align*}
This is enough to see that item (2) is true if item (1) is
true. Clearly (2) implies (3).

Now we assume that (3) is true and argue by contradiction,
assuming that $\eta(\XX)>1$. Let $r>0$ be small enough so
that $\nu_{r}(\XX)>1$ and
$\cP=\{P_{1},\cdots,P_{s}\}\subset\cG_{0}$ be a finite
collection of disjoint open sets of $\cG_{0}$ with diameters
smaller than $r/2$ and such that
$\XX=\bigcup_{j=1}^{s}\overline{P_{j}}$. Note that
$\cP_{\varepsilon} =
\{B_{\varepsilon/2}(P_{1}),\cdots,B_{\varepsilon/2}(P_{s})\}$
is a cover of $\XX$ by open sets with diameter smaller than
$\frac{r}{2}+\varepsilon$. As $\eta(\partial P_{j})=0$
$\forall\,j$, it follows from Lemma~\ref{Lema005454a} that
$\tau({P_{j}})
\le
\tau(\overline{P_{j}})
\le
\tau(B_{\varepsilon}(P_{j}))
\le
\tau({P_{j}})+\tau(B_{\varepsilon}(\partial P_{j}))
\searrow\tau({P_{j}})$. That is,
$ \tau({P_{j}})
=
\tau(\overline{P_{j}})
=
\lim_{\varepsilon\to0}\tau(B_{\varepsilon}(P_{j}))$
for all $j$.

Considering $0<\varepsilon<r/2$, we get
$\sum_{j=1}^{s}\tau(B_{\varepsilon}(P_{j}))\ge\nu_{r}(\XX)>1.$
Therefore, we arrive at
$\sum_{j=1}^{s}\tau(P_{j})\ge\nu_{r}(\XX)>1$ when
$\varepsilon\to0$.

Since we are assuming that
$\lim_{n\to\infty}\frac{1}{n}\sum_{j=0}^{n-1}\xi_j(\varphi)$
exists for each continuous function $\varphi$ and as
$0\le\xi_{j}\le1$, it is not difficult to show that
$\lim_{n\to\infty}\frac{1}{n}\sum_{j=0}^{n-1}\xi_j(\chi_{A})$
exists for each $A\in\cG_{0}$. That
is,
\begin{align*}
\tau(A)=\lim_{n\to\infty}\frac{1}{n}\sum_{j=0}^{n-1}\xi_j(A),\,\,\forall
A\in\cG_{0}.
\end{align*}
We then obtain the following contradiction
\begin{align*}
  1
  \ge
  \tau(\cup_{j=1}^{s}P_{j})
  =
  \lim_{n\to\infty}\frac{1}{n}\sum_{j=0}^{n-1}\xi_j(\cup_{j=1}^{s}P_{j})
  =
  \sum_{j=1}^{s}\lim_{n\to\infty}\frac{1}{n}\sum_{j=0}^{n-1}\xi_j(P_{j})
  =
  \sum_{j=0}^{n-1}\tau(P_{j})>1.
\end{align*}
We conclude that $\eta$ is a probability measure (recall
that we always have $\eta(\XX)\ge1$) and the proof is
complete.
\end{proof}

Given $x\in\XX$, recall that we set
$x_{j}=f^{j}(x)$ and define the sequence of measures
$\xi_j:=\delta_{x_j}$ for $j\ge0$. Then $\tau$ satisfies
$\tau(f^{-1}(A))=\tau(A)$ for every subset $A$ of $\XX$. We
denote by $\eta_{x}=\eta$ the measure obtained from $\tau$
with the above choices as described in
Section~\ref{sec:statement-results}.

From the last corollary we obtain a characterization of
historic points.

\begin{corollary}
  \label{cor:historic-noaverge}
  For a continuous map $f$ on a compact metric space $\XX$,
  we have that $x$ is a historic point for $f$ if, and only
  if, there exists a continuous function $\vfi:\XX\to\RR$
  such that the sequence of time averages
  $n^{-1}\sum_{j=0}^{n-1}\vfi(f^jx)$ does not converge when
  $n\to+\infty$.
\end{corollary}

\begin{proof}
  It follows directly from Corollary~\ref{corollary0562572}
  since $x\in\cH_f$ means by definition that $\eta_x$ is not
  a probability.
\end{proof}

\subsection{The continous case}
\label{sec:continous-case}

We now assume that not only $\tau$ is $f$-invariant, but
also that $f:\XX\to\XX$ is a continuous map on a metric
space $\XX$.

\begin{theorem}\label{TheoremContinuous}
  If $f$ is continuous, then $\eta_{x}$ is a non trivial
  invariant measure for every $x\in\XX$.
\end{theorem}

\dem We claim that $\eta_{x}(A)\le\eta_{x}(f^{-1}(A))$ for
all $A\in\cG$.  Let $A\in\cG$. Let $r>0$ and
$\cc\in\cA(r,\overline{A})$. As $\overline{A}$ is compact,
let $\cc'\subset\cc$ bet a finite subcover of
$\overline{A}$. Clearly $\cc'\in\cA(r,\overline{A})$
and $\nu_{r}(\overline{A})\le\sum_{V\in\cc'}\tau_{x}(V) \le
\sum_{V\in\cc}\tau_{x}(V)$. Given any set $V$ and
$\varepsilon>0$, define $$[V]_{\varepsilon}=V\setminus
B_{\varepsilon}(\partial V).$$ As $\overline{A}$ is compact
and $\cc'$ is finite, it is easy to see that there is some
$\varepsilon_{0}>0$ such
that $$\cc'_{\varepsilon}:=\{[V]_{\varepsilon}\,;\,V\in\cc'\}\in\cA(r,\overline{A})$$
for every $0<\varepsilon\le\varepsilon_{0}$.  

Moreover,
$\nu_{r}(\overline{A})\le\sum_{V\in\cc'_{\varepsilon}}\tau_{x}(V)\le\sum_{V\in\cc'}\tau_{x}(V)\le\sum_{V\in\cc}\tau_{x}(V)$,
whenever $0<\varepsilon\le\varepsilon_{0}$. 

It follows from Corollary~\ref{Cor005454a} that we can
choose $0<\varepsilon\le\varepsilon_{0}$ so that
$[V]_{\varepsilon}$ and $f^{-1}([V]_{\varepsilon})\in\cG$
for all $V\in\cc'$.  That is, $V,f^{-1}(V)\in\cG$ $\forall
V\in\cc'_{\varepsilon}$.

We write $\cc'_{\varepsilon}=\{V_{1},\cdots,V_{s}\}$ and set
$P_{1}=V_{1}\cap\overline{A}$,
$P_{2}=(V_{2}\cap\overline{A})\setminus P_{1}$, $\cdots$,
$P_{s}=(V_{s}\cap\overline{A})\setminus(P_{1}\cup\cdots\cup
P_{s-1})$.
Observe that
\begin{enumerate}
\item $P_{j}\cap P_{k}=\emptyset$ if $j\ne k$,
\item $B_{\varepsilon'}(P_{j})\subset V_{j}$ for every
  $0<\varepsilon'<\varepsilon$ and every $j=1,\cdots,s$,
\item $\bigcup_{j=1}^{s}P_{j}=\overline{A}$ and
\item $\eta_{x}(\partial P_{j})=\eta_{x}(\partial
  (f^{-1}(P_{j})))=0$ for all $j=1,\cdots,s$.
\end{enumerate}

Thus, 
$$\nu_{r}(\overline{A})\le\sum_{j=1}^{s}\tau_{x}(B_{\varepsilon'}(P_{j}))\le\sum_{j=1}^{s}\tau_{x}(V_{j}).$$
On the other hand $\eta_{x}(\partial P_{j})=0$, we get
$\lim_{\varepsilon'\to0}\tau_{x}(B_{\varepsilon'}(\partial
P_{j}))=0$ and
so, $$\lim_{\varepsilon'\to0}\tau_{x}(B_{\varepsilon'}(P_{j}))=\tau_{x}(P_{j})\,\,\,\forall\,j=1,\dots,s.$$

So, we can conclude that 
$$\nu_{r}(\overline{A})\le\sum_{j=1}^{s}\tau_{x}(P_{j}).$$
As $\chi_{\YY}(f^{j}(x))=\chi_{f^{-1}(\YY)}(f^{j-1}(x))$,
for every $\YY\subset\XX$, we get that $\tau_{x}\circ
f^{-1}=\tau_{x}$ and so,
$$
\nu_{r}(\overline{A})\le\sum_{j=1}^{s}\tau_{x}(P_{j})=
\sum_{j=1}^{s}\tau_{x}(f^{-1}(P_{j})).
$$
As $\eta_{x}(\partial f^{-1}(P_{j}))=0$, we get
$\tau_{x}(\partial f^{-1}(P_{j}))=0$
(from Lemma~\ref{Lema005454a}). Applying Lemma~\ref{Lema005454b}
to $\interior f^{-1}(P_{j})$, it follows that
$$
\tau_{x}(f^{-1}(P_{j}))\le\eta_{x}(\overline{\interior f^{-1}(P_{j})})\le \eta_{x}(\interior (f^{-1}(P_{j})))\le \eta_{x}(f^{-1}(P_{j})).
$$
Therefore,
$$
\nu_{r}(\overline{A})\le\sum_{j=1}^{s}\tau_{x}(f^{-1}(P_{j}))\le\sum_{j=1}^{s}\eta_{x}(f^{-1}(P_{j}))=\eta_{x}\bigg(\sum_{j=1}^{s}f^{-1}(P_{j})\bigg)=\eta_{x}(f^{-1}(\overline{A})).
$$
Taking $r\to0$, we get
$$
\eta_{x}(A)=\eta_{x}(\overline{A})\le\eta_{x}(f^{-1}(\overline{A}))=\eta_{x}(f^{-1}(A)),
$$
whenever $A\in\cG$.

Note that $A\in\cG$ $\Rightarrow$ $\XX\setminus\overline{A}\in\cG$.
Suppose that there are some $A\in\cG$ such that
$\eta_{x}(A)<\eta_{x}(f^{-1}(A))$. Thus
$\eta_{x}(\overline{A})<\eta_{x}(f^{-1}(\overline{A}))$ and
then we reach a contradiction:
$$
\eta_{x}(\XX)=\eta_{x}(\overline{A})+\underbrace{\eta_{x}(\XX\setminus\overline{A})}_{\le
  f^{-1}(\XX\setminus\overline{A}))}<\eta_{x}(f^{-1}(\overline{A}))+\eta_{x}(f^{-1}(\XX\setminus\overline{A}))=\eta_{x}(\XX).$$
As a consequence,
$\eta_{x}(A)=\eta_{x}(f^{-1}(A))\,\,\,\forall\,A\in\cG.$
Using lemma~\ref{Lema005454c}, we can conclude that
$\eta_{x}(E)=\eta_{x}(f^{-1}(E))$ for every Borel set.  \cqd



\section{Wild historic behavior}
\label{sec:wild-histor-behavi-1}

Here we present a proof of
Theorem~\ref{mthm:genericallywild}.

\subsection{Topological Markov Chain}
\label{sec:topolog-markov-chain}

We start with a countable subshift given by a denumerable set $S$ of
symbols (the alphabet, which may be finite with at least two symbols),
an incidence matrix $A=(a_{i,j})_{i,j\in S}$ with $0$ or $1$ entries
and the set
$\XX=\{\underline x=(x_i)_{i\ge0}:a_{x_{i},x{i+1}}=1, i\ge0\}$ of
admissible sequences. We assume that $A$ is aperiodic: there exists
$N\in\ZZ^+$ such that for any pair $b,c\in S$ there are
$x_1,\dots,x_{N-1}\in S$ satisfying
$a_{b,x_1}=a_{x_1,x_2}=\dots =
a_{x_{n-2},x_{n-1}}=a_{x_{N-1},c}=1$. This is the same as requiring
that the matrix $A^N$ have all entries $a^N_{i,j}$ positive.

\subsubsection{Construction of a wild historic orbit}
\label{sec:constr-wild-histor}

We consider the usual left shift map
$\sigma:\XX\circlearrowleft$ and then $(\XX,\sigma)$ is a
mixing topological Markov chain with denumerable set of
states (or symbols).  The aperiodic condition on $A$
ensures, in particular, that there exists a dense orbit
and a denumerable dense subset $\per(\sigma)$ of periodic
orbits; see e.g. \cite{Bo75,Man87}. More precisely, for any
given admissible sequence $a_0,a_1,\dots,a_k$ of symbols and
$q>k+N$ there exists a periodic orbit
$\underline p=(x_k)_{k\ge0}$ such that
$x_0=a_0,\dots,x_k=a_k$ and its (minimum) period
$\pi(\underline p)$ is $q$.
In what follows, we write $\{\underline p_n\}_{n\ge0}$
for an enumeration of a choice of one point of every
periodic orbit in $\per(\sigma)$.
 
We 
consider the following integer sequence defined by
recurrence
  \begin{align*}
    \ell_1=N \qand \ell_{n+1}=10^n\cdot\sum_{i=1}^n \ell_i,
    \quad n>1.
  \end{align*}
  We note that $\ell_{n+1}>\ell_n+N$ 
  for each $n\ge1$.
  We recall the notion of a cylinder set in $\XX$: for a
  given sequence $\underline a=(a_i)_{i\ge1}\in\XX$ and a
  given positive integer $n$ we define
  \begin{align*}
    [\underline a]_n = \{ x=(x_i)_{i\ge1}\in\XX : x_1=a_1,\dots,x_n=a_n\}.
  \end{align*}
  Now we consider the integer sequence $\kappa_n,n\ge1$:
  \begin{align*}
    1, 1, 2 , 1,2,3,1,2,3,4,1,2,3,4,5,1,2,3,4,5,6,\dots
  \end{align*}
  which assumes the value of any given positive integer
  infinitely many times and may be defined through the
  sequence of indexes
  \begin{align*}
    \alpha_0=0\qand\alpha_{n+1}=\alpha_n+n+1, n\ge0
  \end{align*}
  together with the rule $\kappa_{\alpha_n+j}=j$ for all
  $n\ge1$ and $j=1,\dots,n$.  Then we join several strings
  of digits of $\underline p_n$ to form a sequence
  $\underline z\in\XX$ satisfying 
  \begin{align*}
    \sigma^{\ell_n}(\underline z)\in [\underline
    p_{\kappa_n}]_{\ell_n},
    \quad n\ge1.
  \end{align*}
  Such sequence $\underline z$ is admissible (that is, it
  does belong to $\XX$) since $\ell_{n+1}-\ell_n>N$ and so
  the required transitions from the position
  $\ell_n+\kappa_n$ to the position $\ell_{n+1}$ are
  allowed.

  This sequence $\underline z$ is such that its positive
  $\sigma$-orbit visits a very small $\ell_n$-cylinder
  around the $\kappa_n$th element of $\underline p_n$. By
  the definition of $\kappa_n$ and of $\ell_n$, for any
  fixed $\underline p_h$ and any given cylinder $[\underline
  p_h]_m$, with $m\ge1$, we have for all $n\ge1$ such that
  $\kappa_n=h$
  \begin{itemize}
  \item[(P1)] $\sigma^i(\sigma^{\ell_n}(\underline z))$ belongs to
    $[\underline p_h]_m$ for $i=0,\pi_h,2\pi_h,\cdots,
    \tau\pi_h$, where $\tau=[\ell_n/\pi_h]$ and
    $\pi_h=\pi(\underline p_h)$ is the (minimum) period of
    $\underline p_h$;
\item[(P2)] $\sum_{j=\ell_n}^{2\ell_n}
  \chi_{[\underline  p_h]_m}(\sigma^j(\underline z))
  \ge\tau\ge \ell_n\frac{\tau}{\tau\pi_h+r}\ge\frac{\ell_n}{\pi_h}\frac{\tau}{1+\tau}\ge\frac{\ell_n}{2\pi_h}$.
\end{itemize}
Since 
\begin{align*}
  \frac1{2\ell_n+1}\sum_{j=0}^{2\ell_n}
  \chi_{[\underline  p_h]_m}(\sigma^j(\underline z))
  \ge
  \frac1{2\ell_n+1}\sum_{j=\ell_n}^{2\ell_n}
  \chi_{[\underline  p_h]_m}(\sigma^j(\underline z))
  \ge
  \frac{\ell_n}{2\ell_n+1}\cdot\frac1{2\pi_h}
\end{align*}
and
$\kappa_n=h$ for infinitely many positive integers $n$, we
obtain
  \begin{align}\label{eq:cotacilindro}
    \tau_{\underline z}([\underline  p_h]_m)
    =\limsup_{n\to+\infty}
    \frac1n\sum_{j=0}^{n-1}
    \chi_{[\underline  p_h]_m}(\sigma^j(\underline
    z))\ge\frac1{4\pi_h},
\quad\text{for each $h,m\ge1$.}
  \end{align}
  Moreover, if $\underline p_h=(a_i)_{i\ge0}$, then there
  are periodic orbits $p_j\in\per(\sigma)$ whose first $m+1$
  symbols are $a_0,\dots,a_m$ and whose period is any
  positive integer $q_j>m+jN$; and these periodic orbits
  also belong to $[\underline p_h]_m$ (as well as some of
  its iterates, as in item (1) above whenever
  $\kappa_n=j$). In addition, fixing $q_j$, there are at
  least as many periodic orbits $p_j$ with period $q_j$ as
  above as there are letters in $S^j$.\footnote{We denote
    $S^j$ the family of all concatenations of $j$ letters
    from $S$.}  Therefore, (\ref{eq:cotacilindro}) together
  with Lemma~\ref{Lema005454b} implies
  \begin{align}\label{eq:wild}
    \eta_{\underline{z}}([\underline  p_h]_m)
    \ge
    \eta_{\underline{z}}\left(\bigcup_{\underline
        p_j\in\per_{q_j}(\sigma)\cap[\underline p_h]_m}[\underline  p_j]_m\right)
    \ge
    \frac{(\#S)^j}4\cdot\frac1{m+jN}.
  \end{align}
  Even if the set $S$ of symbols is finite, then $(\#S)^j$
  increases exponentially fast in $j$ and we get
  $\eta_z([\underline p_h]_m)=\infty$.  Clearly, we can use
  any $m\ge1$ and also replace $\underline p_h$ by
  $\sigma^i(\underline p_h)$ in the above argument for all
  $i\ge1$. In addition, the set of accumulation points of
  the $\sigma$-orbit of $\underline z$ is $\XX$, because
  $\per(\sigma)$ is dense in $\XX$.

  Since~\eqref{eq:wild} holds for arbitrarily large integers
  $m\ge1$, then
  $\eta_{\un{z}}(\{\sigma^i\un{p}_h\})=+\infty$ for each
  $h\ge1$ and $i\ge1$. Hence $\eta_{\un{z}}$ has countably
  many dense atoms with infinite mass.
  
  Thus, we see that $\eta_{\underline z}(A)=+\infty$ for
  every open subset $A$ of $\XX$, and $z$ is a wild historic
  point.

  \begin{remark}
    \label{rmk:wildperiodic}
    The previous argument show that, in a system with dense
    countable subset of periodic orbits, \emph{if a point
      $z$ satisfies $\tau_zU\ge c(p)>0$ for every
      neighborhood $U$ of a periodic point $p$, where $c(p)$
      depends on $\cO(p)$ only, then $z$ is a wild historic
      point.}
  \end{remark}

  \begin{remark}\label{rmk:notsigmafinite}
    In particular, $A=\{ \un{p}_1, \dots, \un{p}_h\}$ is
    such that $\eta_{\un{z}}(A)=+\infty$ for each given
    $h>1$. So no non-empty subset $B$ of $A$ satisfies
    $\eta_{\un{z}}(B) <\infty$.
  \end{remark}
  
\subsubsection{Genericity of wild historic points}
\label{sec:wild-historic-points}

We claim that such points form a generic subset of
$\XX$. Indeed, let us consider, for any given $n$, the
integer $m_n$ such that, for all $1\le j ,k <n$ with $k\neq
j$ we have $B(\underline p_j,m_n)\cap B(\underline
p_k,m_n)=\emptyset$, where
  \begin{align*}
    B(\underline p_h,m_n) = \bigcup_{1\le i \le
      \pi(\underline p_h)} [\sigma^i(\underline p_h)]_{m_n}
  \end{align*}
  is a neighborhood of the orbit of $\underline p_h$ for any $h\ge1$.
  That is, we take a pairwise disjoint neighborhood of the
  orbit of each one of $\underline p_1,\dots,\underline
  p_{n-1}$, for each $n\ge1$. We observe that clearly
  $m_n\nearrow+\infty$. 

  We then consider the family of sets
  \begin{align*}
    U_n=
    \left\{\underline x\in\XX\mid 
    \forall 1\le j<n \, \exists k=k(j,n)>n :
    \frac1k\sum_{i=0}^{k-1}
    \chi_{B(\underline p_j,m_n)}(\sigma^i(\underline x))
    \ge \frac1{\pi(\un{p}_j)}-\frac1n\right\}
  \end{align*}
  for all integers $n\ge1$. We note that each $U_n$ is open
  in $\XX$, since cylinders are simultaneously closed and
  open sets and $\sigma$ is continuous. In addition,
  $\sigma^i(\underline z)\in U_n$ for all $n,i\ge1$, which
  shows that each $U_n$ is also dense. Thus
  \begin{align}\label{eq:wildY}
  Y=\bigcap_{n\ge1}U_n
  \end{align}
  is a generic subset of $\XX$.  We now show that each
  element $\underline w$ of $Y$ is a wild historic point:
  $\un w\in\W_\sigma$.

  Fix $\underline w\in Y$, a point $\underline a\in\XX$ and $m\ge1$, and
  note that
    \begin{align*}
    \tau_{\underline w}([\underline a]_m)=\limsup_{n\to+\infty}
    \frac1n\sum_{j=0}^{n-1}
    \chi_{[\underline  a]_m}(\sigma^j(\underline w))
    \ge
    \frac1{\pi(\underline p)}
  \end{align*}
  where $\underline p$ is a periodic point $\underline
  p\in\per(\sigma)\cap[\underline a]_m$ of $\sigma$ in the
  open set $[\underline a]_m$. But, as before, we know that
  inside $[\underline a]_m$ we can find periodic points of
  $\sigma$ with all periods $q>m+N$. Hence, as in
  (\ref{eq:wild}) we obtain $\eta_{\underline w}([\underline
  a]_m)=\infty$. Since $\un a\in\XX$ and $m\ge1$ were
  arbitrarily chosen, we conclude that $\un w\in\W_\sigma$.

This completes the proof of the item (1) of
Theorem~\ref{mthm:genericallywild} and shows, in particular,
that any subshift of finite type (i.e., the same as $\XX$
but with a finite alphabet) has a generic subset of wild
historic points.

Moreover, it is clear that the same construction applies
verbatim to two-sided topological Markov chains, for which
the left shift map $\sigma$ is a homeomorphism.

\begin{remark}\label{rmk:specification}
  We note also that a similar construction can be performed
  in any system with specification; see \cite[Chapter
  21]{DGS76}. However, there are classes of systems with no
  specification where our construction can be performed: see
  Example~\ref{ex:hyperb-measur-Lorenz} and Remark
  \ref{rmk:nospecification} in
  Subsection~\ref{sec:hyperb-measur-diffeo}.
\end{remark}

\begin{remark}
  \label{rmk:unitmass}
  By the above construction of $Y$, every $y\in Y$ satisfies
  $\tau_y U=1/\pi(p)$ for every neighborhood $U$ of each
  periodic orbit $p$ in $\XX$. Hence, from
  Remark~\ref{rmk:wildperiodic}, we have that
  \emph{generically $z\in\XX$ is a wild historic point if,
    and only if, $z$ satisfies $\tau_zU\ge c(p)>0$ for every
    neighborhood $U$ of a periodic point $p$, where $c(p)$
    depends on $\cO(p)$ only.}
\end{remark}

\subsubsection{Non-existence of time averages for open and
  dense family of observables}
\label{sec:non-existence-time}

We show that \emph{at a wild historic point time averages do
  not exist for an open and dense subset of continuous
  functions}. Moreover, as shown by Jordan, Naudot and
Young in \cite{JNY09}, \emph{all higher order averages also
  fail to exist}.

We say that $\vfi\in C^0(\XX,\RR)$ is \emph{periodically
  trivial} if $\vfi$ has the same time average over any
periodic orbit, that is
$ \pi(\un q) \sum_{i=0}^{\pi(\un p)-1}\vfi(\sigma^i\un p)
=\pi(\un p)\sum_{i=0}^{\pi(\un q)-1}\vfi(\sigma^i\un q),
\quad\forall \un p,\un q\in\per(\sigma)$.
It is clear that the family of non periodically trivial
functions is an open and dense subset of $C^0(\XX,\RR)$ with
the uniform topology.

\begin{lemma}
  \label{le:nonconst}
  Let $Y\subset\W_\sigma$ be defined as in \eqref{eq:wildY}.
  Given $x\in Y$ and $\vfi\in C^0(\XX,\RR)$, then
  $\frac1n\sum_{j=0}^{n-1}\vfi(f^jx)$ converges when
  $n\to\infty$ if, and only if, $\vfi$ is periodically trivial.
\end{lemma}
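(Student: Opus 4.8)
The plan is to prove the two implications of Lemma~\ref{le:nonconst} separately, exploiting the structure of the generic set $Y$ recorded in Remark~\ref{rmk:unitmass}. For the easy direction, suppose $\vfi$ is periodically trivial, so that there is a constant $c$ with $\frac1{\pi(\un p)}\sum_{i=0}^{\pi(\un p)-1}\vfi(\sigma^i\un p)=c$ for every $\un p\in\per(\sigma)$. I would show that the time averages of $\vfi$ along the orbit of any $x\in Y$ converge to $c$. By Remark~\ref{rmk:unitmass}, $\tau_x U=1/\pi(p)$ for every neighborhood $U$ of a periodic orbit $p$; intuitively the orbit of $x$ spends ``all its time'' shadowing longer and longer periodic orbits, and on each of these $\vfi$ has average exactly $c$ (up to the uniform modulus of continuity of $\vfi$ and the choice of small cylinders). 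Making this precise, fix $\epsilon>0$, choose $\delta$ from uniform continuity of $\vfi$, pick $m$ so that cylinders $[\cdot]_m$ have diameter $<\delta$, and use the fact that the $\sigma$-orbit of $x$ decomposes (as in the construction of $\underline z$ and $Y$) into long blocks each of which $\delta$-shadows a periodic orbit with a controlled leftover of bounded relative length; summing the contributions shows $|\frac1n\sum_{j=0}^{n-1}\vfi(\sigma^j x)-c|<2\epsilon$ for all large $n$.

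For the converse, suppose $\vfi$ is not periodically trivial, so there exist periodic orbits $\un p,\un q\in\per(\sigma)$ with distinct averages $a:=\frac1{\pi(\un p)}\sum_{i}\vfi(\sigma^i\un p)\neq b:=\frac1{\pi(\un q)}\sum_{i}\vfi(\sigma^i\un q)$. I want to produce, along the orbit of any $x\in Y$, two subsequences of time averages with different limits. Since $x\in Y$, by the definition of $U_n$ in \eqref{eq:wildY} there are infinitely many times $k$ for which $\frac1k\sum_{i=0}^{k-1}\chi_{B(\un p,m)}(\sigma^i x)$ is as close as we like to $1/\pi(\un p)$, and likewise for $\un q$. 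At such times the orbit has spent almost its entire history inside a small neighborhood of $\cO(\un p)$ (resp. $\cO(\un q)$), which forces $\frac1k\sum_{i=0}^{k-1}\vfi(\sigma^i x)$ to be close to $a$ (resp. to $b$): again use uniform continuity together with the fact that the number of visits to each of the $\pi(\un p)$ cylinders composing $B(\un p,m_n)$ is essentially balanced, as in properties (P1)--(P2) of Section~\ref{sec:constr-wild-histor}. Hence $\liminf_n\frac1n\sum_{j<n}\vfi(\sigma^j x)\le b<a\le\limsup_n\frac1n\sum_{j<n}\vfi(\sigma^j x)$ (or with the roles reversed), so the Birkhoff average of $\vfi$ along $x$ cannot converge.

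The main obstacle is the bookkeeping in both directions: one must convert the statements about $\tau_x$ (a $\limsup$ of averages of indicator functions of cylinders) into genuine estimates on the averages of the continuous function $\vfi$. The cleanest way is to approximate $\vfi$ uniformly by a function constant on each cylinder $[\cdot]_m$ for suitable $m$, so that $\frac1n\sum_{j<n}\vfi(\sigma^j x)$ is controlled by the finitely many cylinder-frequencies $\frac1n\sum_{j<n}\chi_{[\un a]_m}(\sigma^j x)$; then one invokes the exact value $\tau_x([\un a]_m)$ contributed by the relevant periodic orbits inside $[\un a]_m$, and the fact that these frequencies sum to $1$ in the limit along the good subsequences of times (because $\eta_x$ is concentrated, in a frequency sense, on $\per(\sigma)$, even though $\eta_x$ itself has infinite mass). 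Handling the "leftover" blocks of length at most $N$ between shadowing segments, and the boundedly-many-cylinders decomposition of each periodic-orbit neighborhood, is routine but needs care with the normalization $1/(2\ell_n+1)$ versus $1/k$. Once the approximation step is set up cleanly, both implications follow by the same estimate applied along the appropriate subsequence of times.
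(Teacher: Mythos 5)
Your second implication (``$\vfi$ not periodically trivial $\Rightarrow$ the averages do not converge'') is essentially the paper's own argument: pick two periodic orbits with distinct $\vfi$-averages $a\ne b$, use the defining property of $Y$ (equivalently Remark~\ref{rmk:unitmass}) to produce two subsequences of times along which the orbit has spent almost all of its past in a small, balanced neighborhood of $\cO_\sigma(\un p)$, resp.\ of $\cO_\sigma(\un q)$, and conclude that the Birkhoff averages along these subsequences are close to $a$, resp.\ to $b$. This is exactly the ``Reciprocally'' paragraph of the paper's proof, and your explicit attention to the balance among the $\pi(\un p)$ cylinders making up $B(\un p,m_n)$ is, if anything, more careful than the paper's phrasing.

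The gap is in your ``easy'' direction. You assert that for an arbitrary $x\in Y$ ``the $\sigma$-orbit of $x$ decomposes into long blocks each of which $\delta$-shadows a periodic orbit with a controlled leftover of bounded relative length''. That is a property of the specific point $\un z$ constructed in Subsection~\ref{sec:constr-wild-histor}, not of a general point of $Y$: the sets $U_n$ in \eqref{eq:wildY} only require the \emph{existence} of some time $k=k(j,n)>n$ at which the frequency of visits to $B(\un p_j,m_n)$ is large, and impose no constraint on what the orbit does at all other times, so a general point of $Y$ need not admit any such block decomposition and your estimate ``$|\frac1n\sum_{j<n}\vfi(\sigma^jx)-c|<2\epsilon$ for all large $n$'' does not follow. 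The correct (and much shorter) route to this implication goes through Lemma~\ref{le:wildmaxosc} and Proposition 21.8 of \cite{DGS76}: every weak$^*$ accumulation point of the empirical measures $\frac1n\sum_{j<n}\delta_{\sigma^jx}$ is a $\sigma$-invariant probability; since the periodic measures are dense in $\PP_\sigma(\XX)$ and $\mu\mapsto\int\vfi\,d\mu$ is weak$^*$ continuous, periodic triviality forces $\int\vfi\,d\mu=c$ for \emph{every} invariant $\mu$, hence every subsequential limit of $\frac1n\sum_{j<n}\vfi(\sigma^jx)$ equals $c$ and the averages converge (in fact for every $x\in\XX$, not only for $x\in Y$). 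It is worth pointing out that the paper's written proof does not actually address this direction either: its first paragraph proves ``convergence $\Rightarrow$ periodically trivial'' and its ``Reciprocally'' paragraph proves the contrapositive of that same implication. So your instinct to supply the missing implication was sound, but the argument you chose for it does not work and should be replaced as above.
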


\begin{proof}
  Let us assume that
  $\wt\vfi(x)=\lim_{n\to+\infty}\frac1n\sum_{j=0}^{n-1}\vfi(f^jx)$
  exists. By Remark~\ref{rmk:unitmass}, given
  $\un p\in\per(\sigma)$ and any neighborhood $U$ we have
  $\tau_xU=1$. Fixing $\epsilon>0$ we set $U$ so that
  $|\vfi(y)-\vfi(z)|<\epsilon$ for all $y,z\in U$.  Hence we
  can find a sequence $n_k\nearrow\infty$ such that
  $\frac1{n_k}\sum_{j=0}^{n_k-1}\chi_U(\sigma^jx)\ge1-\epsilon$
  for all $k\ge1$ and, consequently, we get
\begin{align*}
    \frac{1}{\pi(\un p)}\left(\sum_{i=0}^{\pi(\un
      p)-1}\vfi(\sigma^i\un p) +\epsilon\right)
      +\epsilon\|\vfi\|_0
  \ge
    \frac1{n_k}\sum_{j=0}^{n_k-1}\vfi(\sigma^jx)
    &\ge
    \frac{1-\epsilon}{\pi(\un p)}\left(\sum_{i=0}^{\pi(\un
      p)-1}\vfi(\sigma^i\un p) -\epsilon\right)
      -\epsilon\|\vfi\|_0.
\end{align*}
Since $\epsilon>0$ is arbitrary, we obtain
$\wt\vfi(x)= \frac{1}{\pi(\un p)}\sum_{i=0}^{\pi(\un
  p)-1}\vfi(\sigma^i\un p)$. Because $\un p\in\per(\sigma)$
is arbitrary, we conclude that $\vfi$ is periodically
trivial.

Reciprocally, let $\un p_1,\un p_2\in\per(\sigma)$ be periodic
orbits over which $\vfi$ has distinct time averages. We
assume without loss of generality that 
  \begin{align*}
    \alpha_1
    :=
    \frac{1}{\pi(\un p_1)}\sum_{i=0}^{\pi(\un p_1)-1}\vfi(\sigma^i\un p_1)
    <
    \frac{1}{\pi(\un p_2)}\sum_{i=0}^{\pi(\un p_2)-1}\vfi(\sigma^i\un p_2)
    =:
    \alpha_2.
  \end{align*}
  Let $U_i$ be neighborhoods of $\cO_\sigma(\un p_i)$ so
  that $\sup\vfi\mid U_i-\inf\vfi\mid U_i<\epsilon, i=1,2$
  and $U_1\cap U_2=\emptyset$. Since $\tau_xU_i=1$, we can
  find sequences $n_k(i)\nearrow\infty$ such that
  $\frac1{n_k(i)}\sum_{j=0}^{n_k(i)-1}\chi_{U_i}(\sigma^jx)\ge1-\epsilon$
  for all $k\ge1$ and $i=1,2$. Consequently we get
\begin{align*}
    \frac1{n_k(1)}\sum_{j=0}^{n_k(1)-1}\vfi(\sigma^jx)
    &\le
    (1-\epsilon)(\alpha_1+\epsilon)+\epsilon\|\vfi\|_0=: a_1 \qand
    \\
    \frac1{n_k(2)}\sum_{j=0}^{n_k(2)-1}\vfi(\sigma^jx)
    &\ge
    (1-\epsilon)(\alpha_2-\epsilon)-\epsilon\|\vfi\|_0=:a_2.
\end{align*}
Moreover, $a_1<a_2$ if, and only if,
$\frac{2\epsilon}{1-\epsilon}(\|\vfi\|_0+1-\epsilon)<\alpha_2-\alpha_1$
which is true for all small enough $\epsilon>0$. Hence
$\wt\vfi(x)$ does not exist.
\end{proof}

\subsubsection{Wild historic points are points with maximal
  oscillation}
\label{sec:wild-historic-points-1}

For any $x\in\XX$ we follow \cite{DGS76} and denote by
$V(x)$ the set of all weak$^*$ accumulation points of
$n^{-1}\sum_{j=0}^{n-1}\delta_{\sigma^jx}$. We write
$\PP_\sigma(\XX)$ for the family of all $\sigma$-invariant
Borel probability measures on $\XX$ endowed with the
weak$^*$ topology. Since $(\XX,\sigma)$
has the specification property, we are in the setting of
\cite[Propositions 21.8 \& 21.18]{DGS76}.

\begin{proposition}{\cite[Proposition 21.8]{DGS76}}
  \label{pr:DGS21.18}
  The set of measures concentrated on periodic orbits is
  dense in $\PP_\sigma(\XX)$.
\end{proposition}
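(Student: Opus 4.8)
The plan is to reduce the statement to two approximation steps: first, that every \emph{ergodic} $\sigma$-invariant probability is a weak$^*$ limit of uniform measures on single periodic orbits; and second, that a finite convex combination of such ergodic measures can itself be realized, up to an arbitrarily small weak$^*$ error, by a single periodic orbit. Since basic weak$^*$ neighbourhoods in $\PP_\sigma(\XX)$ are convex and determined by finitely many continuous test functions $\vfi_1,\dots,\vfi_m$ and an $\epsilon>0$, and since the ergodic decomposition (e.g. Theorem~\ref{mthm:ergodicdecomp}) exhibits an arbitrary $\mu\in\PP_\sigma(\XX)$ as a barycenter of ergodic measures, these two steps together yield the density.

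For the ergodic case, fix an ergodic $\mu$, test functions $\vfi_1,\dots,\vfi_m$ and $\epsilon>0$. Birkhoff's ergodic theorem provides a point $x$ and an $n_0$ with $\bigl|\frac1n\sum_{j=0}^{n-1}\vfi_i(\sigma^j x)-\int\vfi_i\,d\mu\bigr|<\epsilon/3$ for every $i$ and every $n\ge n_0$. Pick $\delta>0$ with $d(y,z)<\delta\Rightarrow|\vfi_i(y)-\vfi_i(z)|<\epsilon/3$ for all $i$. Now use the closing/specification property of the mixing topological Markov chain — exactly as in the construction of $\underline z$ above, where aperiodicity of $A$ supplies a connecting word of length $\le N$ — to produce, for $n$ large, a periodic point $p$ of period $n+q$ with $q\le N$ whose first $n$ symbols $\delta$-shadow $x,\sigma x,\dots,\sigma^{n-1}x$. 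Writing $\mu_p=\frac1{n+q}\sum_{j=0}^{n+q-1}\delta_{\sigma^j p}$, split $\bigl|\int\vfi_i\,d\mu_p-\int\vfi_i\,d\mu\bigr|$ into three pieces: the shadowed portion (error $<\epsilon/3$ by the choice of $\delta$), the at most $q$ unshadowed iterates (error $\le q\|\vfi_i\|_0/(n+q)<\epsilon/3$ once $n$ is large, since $q\le N$ is bounded), and the Birkhoff error $\epsilon/3$. Hence $\mu_p$ lies in the prescribed neighbourhood of $\mu$.

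For a general $\mu\in\PP_\sigma(\XX)$, the ergodic decomposition together with convexity of the neighbourhood first lets us replace $\mu$ by a genuine finite convex combination $\nu=\sum_{k=1}^r t_k\mu_k$, the $\mu_k$ ergodic and the $t_k$ positive rationals summing to $1$, with $\bigl|\int\vfi_i\,d\nu-\int\vfi_i\,d\mu\bigr|$ as small as desired. For each $k$ choose, as in the previous paragraph, a $\mu_k$-generic orbit segment; after lengthening segments we may assume their lengths $\ell_k$ satisfy $\ell_k/(\ell_1+\dots+\ell_r)\approx t_k$. Concatenating these $r$ segments, separated by admissible connecting words each of length $\le N$, and closing the whole string into a single periodic point $p$ (again by aperiodicity), we get $\int\vfi_i\,d\mu_p\approx\sum_k t_k\int\vfi_i\,d\mu_k=\int\vfi_i\,d\nu\approx\int\vfi_i\,d\mu$, the $\le rN$ connecting symbols being negligible because $\ell_1+\dots+\ell_r$ may be taken arbitrarily large. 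Thus every weak$^*$ neighbourhood of $\mu$ contains a uniform measure on a periodic orbit.

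The only real work is the bookkeeping in this last step: simultaneously controlling (i) the shadowing error on each block, (ii) the discrepancy between the block-length proportions $\ell_k/\sum_j\ell_j$ and the target weights $t_k$, and (iii) the contribution of the $\le rN$ connecting symbols — all three driven below $\epsilon$ by enlarging the $\ell_k$. The one nontrivial structural input is the gluing/closing property; in the present mixing Markov chain it is elementary (aperiodicity of $A$), so beyond keeping every concatenated and connecting word admissible — precisely the admissibility check already carried out for $\underline z$ earlier in this section — nothing deeper is needed. In the general specification setting of \cite{DGS76} one simply replaces ``connecting words of length $\le N$'' by blocks of bounded length furnished by the specification constant, and the argument is verbatim.
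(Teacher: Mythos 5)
This proposition is not proved in the paper at all --- it is imported wholesale from \cite[Proposition 21.8]{DGS76} --- so the only meaningful comparison is with the classical proof in that reference, and yours is essentially it: close up Birkhoff-generic orbit segments to handle ergodic measures, then realize a finite convex combination of ergodic measures (furnished by the ergodic decomposition plus convexity of basic weak$^*$ neighbourhoods) by concatenating generic blocks in proportion to the weights, with connecting words of bounded length supplied by aperiodicity. The argument is correct, and only two details deserve tightening. First, when you close the block $x_0\cdots x_{n-1}$ into a periodic point $p$, agreement in the first $n$ symbols does not give $d(\sigma^j p,\sigma^j x)<\delta$ for every $j<n$: for $j$ near $n$ too few coordinates of agreement remain. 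Either copy $n+k_0$ symbols of $x$, where agreement in $k_0$ coordinates forces distance below $\delta$, or absorb the at most $k_0$ bad indices into an $O(k_0\|\vfi_i\|_0/n)$ error; the same remedy applies at each junction in the concatenation step. Second, the passage from the barycenter $\mu=\int\eta_x\,d\mu$ to a finite convex combination should be justified by noting that the vector $\big(\int\vfi_1\,d\mu,\dots,\int\vfi_m\,d\mu\big)$ lies in the closed convex hull in $\RR^m$ of the corresponding ergodic value vectors, hence is approximated by finite convex combinations of them. Be aware also that your appeals to uniform continuity and to $\|\vfi_i\|_0<\infty$ presuppose that $\XX$ is compact, i.e.\ a finite alphabet; that is the setting of \cite{DGS76}, whereas the denumerable-alphabet shift of this section is not compact and would require a separate tightness discussion.
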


\begin{proposition}{\cite[Proposition 21.18]{DGS76}}
  \label{pr:DGS21aa}
  The set of points with \emph{maximal oscillation}, that
  is, those $x$ for which $V(x)=\PP_\sigma(\XX)$, form a
  generic subset of $\XX$.
\end{proposition}

We now show that 

\begin{lemma}\label{le:wildmaxosc}
  Every point in the generic subset $Y\subset\W_\sigma$ is a
  point with maximal oscillation. Reciprocally, every point
  with maximal oscillation is a wild historic point.
\end{lemma}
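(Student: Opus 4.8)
The plan is to prove the two implications of Lemma~\ref{le:wildmaxosc} separately, relying on the detailed structure already established for the generic set $Y$.

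For the first implication, fix $\un w\in Y$ and let me show $V(\un w)=\PP_\sigma(\XX)$. By Proposition~\ref{pr:DGS21.18} (the one stating that measures on periodic orbits are dense in $\PP_\sigma(\XX)$), it suffices to show that every measure of the form $\mu_{\un p}=\frac1{\pi(\un p)}\sum_{i=0}^{\pi(\un p)-1}\delta_{\sigma^i\un p}$, with $\un p\in\per(\sigma)$, lies in $V(\un w)$, and then invoke that $V(\un w)$ is closed and convex. To get $\mu_{\un p}\in V(\un w)$: by Remark~\ref{rmk:unitmass}, every $\un w\in Y$ satisfies $\tau_{\un w}U=1/\pi(p)$ for every neighborhood $U$ of the orbit of $p$; since the orbit $\cO_\sigma(\un p)$ consists of $\pi(\un p)$ distinct points and cylinders form a neighborhood basis, we can extract a sequence $n_k\nearrow\infty$ along which the empirical measures $\frac1{n_k}\sum_{j=0}^{n_k-1}\delta_{\sigma^j\un w}$ put mass arbitrarily close to $1$ on an arbitrarily small neighborhood of $\cO_\sigma(\un p)$. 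Actually the cleaner route is: along such $n_k$ the empirical measures, restricted to a small neighborhood $U$ of $\cO_\sigma(\un p)$, concentrate near $\cO_\sigma(\un p)$ with total mass $\to1$; because the orbit is periodic, the visits must be essentially equidistributed among the $\pi(\un p)$ points (any weak$^*$ limit supported on a periodic orbit and invariant must be $\mu_{\un p}$ — but the empirical measures need not be invariant, so one argues directly that consecutive visits to $U$ cycle through the $\pi(\un p)$ sub-cylinders in order, forcing equidistribution up to $O(\pi(\un p)/n_k)$). Passing to a further subsequence where the full empirical sequence converges weak$^*$, the limit is a probability (total mass $1$ already captured near $\cO_\sigma(\un p)$) and equals $\mu_{\un p}$. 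Hence $\mu_{\un p}\in V(\un w)$, and taking the closed convex hull gives $\PP_\sigma(\XX)\subset V(\un w)$; the reverse inclusion is automatic since $\XX$ is compact.

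For the converse, let $x\in\XX$ have maximal oscillation, $V(x)=\PP_\sigma(\XX)$. I want $x\in\W_\sigma$, i.e.\ $\eta_x(A)=+\infty$ for every open $A$. Fix an open set $A$ and a periodic point $\un p\in\per(\sigma)\cap A$ (dense periodic orbits guarantee one exists). Since $\mu_{\un p}\in\PP_\sigma(\XX)=V(x)$, there is a subsequence of empirical measures converging to $\mu_{\un p}$, so for any neighborhood $U$ of $\cO_\sigma(\un p)$ we get $\limsup_n \frac1n\sum_{j=0}^{n-1}\chi_U(\sigma^j x)\ge\mu_{\un p}(U)\ge 1/\pi(\un p)>0$; in particular $\tau_x U\ge1/\pi(\un p)$ for every such $U$. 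This is precisely the hypothesis of Remark~\ref{rmk:wildperiodic}: $\tau_x U\ge c(p)>0$ for every neighborhood $U$ of the periodic point $p$, with $c(p)=1/\pi(p)$ depending only on $\cO(p)$. Therefore $x$ is a wild historic point by Remark~\ref{rmk:wildperiodic}, and since $A$ was arbitrary (any open set contains a periodic point) we conclude $x\in\W_\sigma$.

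The main obstacle is the equidistribution argument in the first implication: knowing only that empirical measures put mass $\to1$ near a periodic orbit does not immediately pin down how that mass splits among the $\pi(\un p)$ orbit points, because empirical measures are not $\sigma$-invariant. The resolution is that along a long block where the orbit of $\un w$ stays inside a small neighborhood $U=\bigsqcup_{i} U_i$ of $\cO_\sigma(\un p)$ (with $U_i$ a small cylinder around $\sigma^i\un p$), the shift dynamics forces $\sigma^j\un w\in U_i \Rightarrow \sigma^{j+1}\un w\in U_{i+1\,(\mathrm{mod}\ \pi(\un p))}$, so the counts in the different $U_i$ differ by at most $1$; this yields $|\frac1{n}\sum_{j<n}\chi_{U_i}(\sigma^j\un w) - \frac1{\pi(\un p)}|\le C/n + (\text{mass outside }U)$ along the subsequence, which is enough. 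Alternatively, one can sidestep this entirely by noting that $Y$ has maximal oscillation can be read off directly: $\tau_{\un w}U=1/\pi(p)$ plus the explicit construction shows the empirical measures along the blocks $[\ell_n,2\ell_n]$ converge to $\mu_{\un p_{\kappa_n}}$, and varying $n$ over the (infinitely many) indices with $\kappa_n=h$ gives every $\mu_{\un p_h}$ as an accumulation point, then take closed convex hull via Proposition~\ref{pr:DGS21.18}.
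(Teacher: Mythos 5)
Your proposal is correct and follows essentially the same route as the paper: the forward direction rests on Remark~\ref{rmk:unitmass} plus the density of periodic measures in $\PP_\sigma(\XX)$, and the converse extracts $\tau_x U\ge 1/\pi(\un p)$ from $\mu_{\un p}\in V(x)$ and then applies the infinite-sum-over-periods argument encapsulated in Remark~\ref{rmk:wildperiodic}, exactly as the paper does (it just redoes the sum $\sum_k 1/k=\infty$ explicitly). The only cosmetic differences are that you handle the equidistribution of mass over the periodic orbit by the cycling-of-subcylinders argument where the paper uses nested neighborhoods $U_k,V_k$ along a common subsequence, and your appeal to convexity of $V(\un w)$ is unnecessary --- closedness of $V(\un w)$ together with density of the periodic measures already yields $V(\un w)=\PP_\sigma(\XX)$.
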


\begin{proof}
  If $x\in\XX$ is a point with maximal oscillation, then
  given any periodic point $\un p\in\per(\sigma)$ we can
  find a sequence $n_k\nearrow\infty$ so that
  $\frac1{n_k}\sum_{j=0}^{n_k-1}\delta_{\sigma^jx}
  \xrightarrow[k\to\infty]{w^*}\frac1{\pi(\un
    p)}\sum_{j=0}^{\pi(\un p)-1}\delta_{\sigma^j\un p}$.
  In particular, we get that $\tau_xU=1$ for every
  neighborhood $U$ of
  $\{\un p,\sigma\un p,\dots,\sigma^{\pi(\un p)-1}\un p\}$
  and, for every small enough neighborhood $V$ of $\un p$,
  we get $ \tau_xV=\pi(\un p)^{-1}$. Moreover, $V$ contains
  distinct periodic points with all periods larger than $\pi(\un
  p)+\ell$ for some $\ell\in\ZZ^+$. Hence $\eta_x
  V\ge\sum_{k\ge\pi(\un p)+\ell}\frac1k=\infty$. Since
  periodic points are dense in $\XX$, this shows that
  $x\in\W_\sigma$.

  If $x\in Y$, then we have $\tau_xU=1$ for every
  neighborhood $U$ of
  $\cO_\sigma(\un p)=\{\un p,\sigma\un
  p,\dots,\sigma^{\pi(\un p)-1}\un p\}$
  of any given periodic point $\un p \in\per(\sigma)$, by
  Remark~\ref{rmk:unitmass}. Moreover, for every small
  enough neighborhood $V$ of $\un p$, we get
  $\tau_xV=\pi(\un p)^{-1}$. Hence, given a nested fundamental
  family $(U_k)_{kge1}$ of neighborhoods of $\cO_\sigma(\un p)$ and
  $(V_k)_{k\ge1}$ of $p$ we can find $n_k\nearrow\infty$ so that
  \begin{align*}
    \frac1{n_k}\sum_{j=0}^{n_k-1}\delta_{\sigma^jx}U_k\ge1-\frac1k,
    \qand
    \frac1{n_k}\sum_{j=0}^{n_k-1}\delta_{\sigma^jx}V_k\ge\frac1{\pi(\un
    p)}-\frac1k,
    \quad \forall k\ge1.
  \end{align*}
  Thus denoting $\mu$ a weak$^*$ accumulation point of
  $\frac1{n_k}\sum_{j=0}^{n_k-1}\delta_{\sigma^jx}$, we
  obtain $\mu(U_k)=1$ and $\mu(V_k)=\frac1{\pi(\un p)}$ for
  all $k\ge1$, and conclude that
  $\mu=\frac1{\pi(\un p)}\sum_{j=0}^{\pi(\un
    p)-1}\delta_{\sigma^j\un p}$.

  Because $\un p\in\per(\sigma)$ was arbitrary, we have show
  that $V(x)$ contains all probability measures supported on
  periodic points. Since these periodic measures are dense
  in $\PP_\sigma(\XX)$, we conclude that
  $V(x)=\PP_\sigma(\XX)$ and $x$ has maximal oscillation.
\end{proof}


\begin{example}\label{ex:extremenonnormal}
  The ``extreme non-normal numbers'' and ``extreme
  non-normal continued fractions'' studied by Olsen in
  \cite{Ols04,Ols03} are wild historic points, and so are
  generic subsets of the interval $[0,1]$. Since Liouville numbers
  (see e.g. Niven \cite{niven56} for a classical
  introduction) also form a generic subset of the real line,
  we have that generically all Liouville numbers are wild
  historic and extreme non-normal points for the maps
  $T_b:[0,1]\circlearrowleft, x\mapsto bx\bmod1$ for all
  positive integers $b\ge2$.

  In fact, extreme non-normal numbers are given by sequences
  $(a_n)_n\in\XX:=\NN^\NN$ so that the frequency of a sequence
  $b\in\NN^h$ of lenght $h\in\NN$ in the first $\ell$
  elements
  \begin{align*}
    \Pi\big((a_n)_n,b,\ell\big)=\frac1{\ell}\#\{1\le j\le \ell-h:
    a_{j}=b_1,a_{j+1}=b_2,\dots,a_{j+h-1}=b_h\}
  \end{align*}
  accumulates, when $\ell\to\infty$, on all the possible
  frequency vectors $p\in
  S_h\subset\Delta_h=\{(p_b)_{b\in\NN^h}:
  \sum_{b\in\NN^h}p_b=1\}$ of sequences of $h$ symbols, for
  each $h\ge1$; cf. the notion of points with maximal
  oscillation.

  An extremely non-normal admissible sequence $\underline
  x=(x_i)_{i\ge0}\in\XX$ visits every given fixed cylinder
  $[\underline p_k]_h$ infinitely many times. Hence, for
  every $h,m\ge1$ there exists $n\ge1$ such that
  $\sigma^{\ell_m}\underline x\in[p_h]_{\ell_m}$. We thus
  obtain (P1) and (P2) as in
  Subsection~\ref{sec:constr-wild-histor} and since this
  holds for infinitely many values of $m$, we also get
  (\ref{eq:cotacilindro}).

  The relation with number theory and the maps $T_b$ is
  given by the partition
  $\{J_i=[(i-1)/b,i/b[: i=1,\dots, b\}$ of the unit interval
  into equally sized intervals; and the connection with the
  Gauss map $G$ is provided by the partition
  $\{L_i=]1/(i+1),1/i], i\ge1\}$. We associate to a sequence
  $\un{a}=(a_n)_{n\ge1}\in\YY=\{1,\dots,b\}^\NN$ or
  $\un{b}=(b_n)_{n\ge1}\in\XX=\NN^\NN$ the point
  $h(\un{a})=\cap_{n\ge1}T_b^{-n}\ov{J_{a_i}}$ and
  $g(\un{b})=\cap_{n\ge1}G^{-n}\ov{L_{b_i}}$; obtaining
  surjective continuous maps $h:\YY\to[0,1], g:\XX\to[0,1]$
  such that $T_b\circ h=h\circ\sigma_{\YY}$ and
  $G\circ g=g\circ\sigma_{\XX}$. We can then use the
  extremely non-normal sequences to build the extremely
  non-normal numbers and continued fractions; see below for
  instances of similar constructions in other settings.
\end{example}

\begin{example}
  \label{ex:Naudot}
  The modified Bowen example from~\cite{JNY09} shows that the orbits
  of each point $x$, in the interior of the plane curve formed by the
  heteroclinic orbits connecting the fixed \emph{non-hyperbolic}
  saddle points $A,B$,  have time averages of ``type $B_2$''
  for each continuous observable $\vfi:\RR^2\to\RR$ with
  $\vfi(A)\neq\vfi(B)$. This is an extremely slow oscillating behavior
  of time averages that is preserved by all higher order averages;
  see~\cite{JNY09} for more details. These authors show that the Bowen
  Example~\ref{ex:Boweneye} with hyperbolic saddles does not admit
  such behavior.

  Since the modified Bowen example from~\cite{JNY09} has the same
  phase portrait as Example~\ref{ex:Boweneye}, we still have
  $\supp\eta_x=\{A,B\}$ as a consequence of Remark~\ref{rmk:tau0}
  together with Lemma~\ref{Lema005454a}. Moreover
  $\eta_x\big(B_{2\epsilon}(A)\setminus B_\epsilon(A))=0$ for all
  small enough $\epsilon>0$. Thus, by definition of $\eta_x$, we
  conclude that $\eta_x(\{A\})\le1$ (and likewise
  $\eta_x(\{B\})\le1$).

  Hence, for this modified Bowen example, $x$ is still an historic and
  not wild historic point for the time-$1$ map. This shows that
  ``type $B_2$'' orbits defined in~\cite{JNY09} do
  not always exhibit wild historic behavior.

  However, the construction in~\cite[Example 2]{JNY09} of an orbit of
  ``type $B_2$'' in the full shift with finitely many symbols is
  parallel to the one in Subsection~\ref{sec:constr-wild-histor}, and
  so all wild historic points in a full shift with finitely many
  symbols are of ``type $B_2$''. So we can loosely write
  $\cW_\sigma\subset B_2$ in this setting.
\end{example}

\subsection{Open continuous and transitive expansive maps}
\label{sec:open-contin-expans}

Let $f:X\to X$ be a continuous map of a compact metric
space. We say that $f$ is \emph{positively expansive}, that
is
\begin{align*}
  \exists \delta>0 : \big( d(f^nx,f^ny)\le\delta, \forall
  n\ge0\big) \implies x=y.
\end{align*}
An apparently stronger notion is that of
distance-expanding. We say that $f$ is
\emph{distance-expanding} if there are constants $\lambda>1,
\eta>0$ and $n\ge0$ so that for all $x,y\in\XX$
\begin{align*}
  d(x,y)\le2\eta\implies d(f^nx,f^ny)\ge\lambda d(x,y).
\end{align*}
We are then in the setting of
\begin{theorem}{\cite[Theorem 4.6.1]{PrzyUrb10}}
  \label{thm:expansiveexpanding}
If a continuous map $f:X\to X$ of a compact metric space
is positively expansive, then there exists a metric on
$X$, compatible with the topology, such that $f$ is
distance-expanding with respect to this metric.
\end{theorem}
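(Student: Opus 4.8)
The plan is to turn positive expansiveness into a uniform finite‑time statement, use it to build a compatible metric out of the ``dynamical balls'' of radius $\delta$, and then read the expansion off that metric.

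\textbf{Step 1 (uniform expansiveness).} First I would show, by compactness, that for every $\epsilon>0$ there is an integer $N(\epsilon)$ with
\[
 \max_{0\le i\le N(\epsilon)}d(f^ix,f^iy)\le\delta\ \Longrightarrow\ d(x,y)\le\epsilon .
\]
If this failed for some $\epsilon$, one picks $x_k,y_k$ with $d(x_k,y_k)\ge\epsilon$ and $d(f^ix_k,f^iy_k)\le\delta$ for $0\le i\le k$; a subsequential limit $x_k\to x$, $y_k\to y$ together with continuity of each $f^i$ gives $d(x,y)\ge\epsilon$ and $d(f^ix,f^iy)\le\delta$ for all $i$, contradicting expansiveness. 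Setting $\delta_0:=\delta/3$ and
\[
 V_m=\{(x,y)\in X\times X\,:\,d(f^ix,f^iy)\le\delta_0,\ 0\le i\le m\},
\]
these are closed, symmetric, nested neighbourhoods of the diagonal $\Delta$ with $\bigcap_mV_m=\Delta$, and Step 1 shows $\{V_m\}$ is a neighbourhood basis of $\Delta$. Two elementary inclusions drive everything else: $(f\times f)(V_{m+1})\subseteq V_m$, and conversely $(f\times f)^{-1}(V_m)\cap V_0\subseteq V_{m+1}$ (the second supplies the missing $i=0$ estimate from membership in $V_0$).

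\textbf{Step 2 (the adapted metric).} Next I would put $N^{\ast}:=N(\delta_0)$ and $m_k:=kN^{\ast}$. Applying Step 1 with constant $\delta$ and threshold $\delta_0$, the triangle inequality yields the absorption property $V_{m_{k+1}}\circ V_{m_{k+1}}\circ V_{m_{k+1}}\subseteq V_{m_k}$ for all $k$. The standard metrization lemma (Frink) then produces a metric $\rho$ on $X$, compatible with the topology because the $V_{m_k}$ are a neighbourhood basis of $\Delta$, with
\[
 V_{m_k}\subseteq\{(x,y):\rho(x,y)<2^{-k}\}\subseteq V_{m_{k-1}}\qquad(k\ge1).
\]

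\textbf{Step 3 (distance‑expanding).} Iterating $(f\times f)^{-1}(V_m)\cap V_0\subseteq V_{m+1}$ a fixed number $JN^{\ast}$ of times, with $J\ge4$ fixed, gives $(f^{JN^{\ast}}\times f^{JN^{\ast}})^{-1}(V_{m_p})\cap V_{JN^{\ast}-1}\subseteq V_{m_{p+J}}$. Choosing $\eta>0$ so small that $\rho(x,y)\le2\eta$ forces $(x,y)\in V_{m_P}$ with $P$ large compared with $J$, I would then, for such $x\ne y$, take $p$ maximal with $(x,y)\in V_{m_p}$; the sandwich of Step 2 gives $2^{-(p+2)}\le\rho(x,y)<2^{-p}$, and the contrapositive of the displayed inclusion (with $p+1=q+J$) gives $(f^{JN^{\ast}}x,f^{JN^{\ast}}y)\notin V_{m_q}$, hence $\rho(f^{JN^{\ast}}x,f^{JN^{\ast}}y)\ge2^{-(q+1)}=2^{J-2}2^{-p}>2^{J-2}\rho(x,y)$. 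This is the distance‑expanding property with $n=JN^{\ast}$, $\lambda=2^{J-2}>1$ and that $\eta$. (If one insists on expansion after a single iterate, one passes once more to $\rho'(x,y)=\sup_{n\ge0}\lambda_1^{-n}\rho(f^nx,f^ny)$ with $1<\lambda_1<\lambda^{1/(JN^{\ast})}$, but the statement as given is already obtained.)

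\textbf{Main obstacle.} The only genuinely delicate point is reconciling Steps 2 and 3: the subsequence $(m_k)$ must grow at the rate $N(\delta_0)$ imposed by the absorption property, which has nothing to do a priori with the unit index shift produced by one application of $f$, and the Frink sandwich loses a factor of $2$ on each side. This is exactly why one iterates $f$ the fixed number $JN^{\ast}$ of times with $J$ large, and why it is natural to state the conclusion for an iterate $f^{n}$; everything else is routine bookkeeping with the sets $V_m$ and continuity of the $f^i$.
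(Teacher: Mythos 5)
Your argument is correct and is essentially the standard proof of this result (the paper itself offers no proof, merely citing Theorem 4.6.1 of Przytycki--Urba\'nski, whose proof, going back to Reddy, likewise builds the entourages of pairs whose orbits stay $\delta_0$-close, verifies the absorption property via uniform expansiveness, and invokes Frink's metrization lemma to get the adapted metric and the expansion estimate for an iterate $f^{JN^{\ast}}$). The bookkeeping in Steps 2--3 checks out, and the conclusion matches the paper's definition of distance-expanding, which explicitly allows the expansion to hold only for some iterate $n\ge0$.
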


Finally, open distance-expanding maps on compact metric
spaces admit Markov partitions \cite[Theorem
4.5.2]{PrzyUrb10} and hence the dynamics of
these maps is semiconjugated to a Topological Markov Chain,
as follows.

\begin{theorem}{\cite[Theorems 4.3.12 \& 4.5.7]{PrzyUrb10}}
  Let $f:X\to X$ be an open distance-expanding map. Then
  there exists a $d\times d$ matrix
  $A\in\{0,1\}^{d\times d}$ such that the corresponding
  one-sided topological Markov Chain $\XX=\Sigma_A$ with the
  left shift map $\sigma:\XX\circlearrowleft$ admits a
  continuous surjective mapping $\pi:\XX\to X$ such
  that $\pi\circ\sigma=f\circ\pi$ and a generic subset
  $Z\subset X$ so that
  $\pi\mid_{\pi^{-1}(Z)}:\pi^{-1}(Z)\to Z$ is
  injective. 

  Moreover, $f$ admits a countable dense subset of periodic
  points and if, in addition, $f$ is transitive, then $f$ is
  topologically mixing.
\end{theorem}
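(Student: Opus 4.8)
The statement is the classical Markov-partition construction for expanding maps (see \cite[Chapter~4]{PrzyUrb10}), and the plan is to reconstruct it in four steps. \emph{First}, I would build a finite \emph{Markov partition} $\cR=\{R_1,\dots,R_d\}$ of $X$: nonempty closed sets with $R_i=\ov{\interior R_i}$, pairwise disjoint interiors, $X=\bigcup_i R_i$, diameters as small as we like, and satisfying the Markov property
$$
f(\interior R_i)\cap\interior R_j\ne\emptyset \implies \interior R_j\subset f(\interior R_i).
$$
One starts from a finite cover of $X$ by balls of radius smaller than the distance-expanding constant $\eta$, on each of which $f$ is injective, and refines it by cutting repeatedly along the images of the boundaries under $f$ and $f^{-1}$: openness of $f$ keeps the interiors of the pieces dense, while the distance-expanding property makes the diameters of pieces obtained by pulling back shrink geometrically, so the refinement stabilizes after finitely many steps and yields the Markov property exactly. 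I expect this to be the \textbf{main obstacle}: it is the technical core of the theorem, and controlling the boundaries so that the implication above holds \emph{exactly} (and not merely approximately) needs care; Bowen's shadowing argument, adapted to the one-sided expanding setting, is an alternative route.

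\emph{Second}, set $A_{ij}=1$ iff $f(\interior R_i)\cap\interior R_j\ne\emptyset$, let $\XX=\Sigma_A$, and define
$$
\pi\big((x_n)_{n\ge0}\big)=\bigcap_{n\ge0}\,\bigcap_{k=0}^{n} f^{-k}\big(\ov{R_{x_k}}\big).
$$
The Markov property makes this a nested sequence of nonempty compact sets, so the intersection is nonempty, and the distance-expanding property forces the diameter of $\bigcap_{k=0}^{n} f^{-k}(\ov{R_{x_k}})$ to tend to $0$ as $n\to\infty$, so the intersection is a single point; this makes $\pi$ well defined, and the same estimate gives its continuity. The relation $f\circ\pi=\pi\circ\sigma$ is immediate from the definition, and $\pi$ is onto because every $x\in X$ admits at least one admissible itinerary (choose $R_{x_0}\ni x$ and extend step by step using the Markov property and the openness of $f$).

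\emph{Third}, put $\partial\cR=\bigcup_i\partial R_i$ and $Z=X\setminus\bigcup_{n\ge0}f^{-n}(\partial\cR)$. Each $\partial R_i$ is closed with empty interior, and since $f$ is a continuous open surjection the same is true of every $f^{-n}(\partial\cR)$; hence $Z$ is a dense $G_\delta$, i.e.\ a generic subset of $X$. If $x\in Z$ then for every $n\ge0$ there is a \emph{unique} index $i$ with $f^n(x)\in\interior R_i$, so $x$ has a unique itinerary and $\pi^{-1}(x)$ is a singleton; thus $\pi|_{\pi^{-1}(Z)}$ is injective.

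\emph{Fourth}, for the ``moreover'' part: periodic sequences are dense in $\Sigma_A$, the semiconjugacy sends them to periodic points of $f$, and a continuous surjection maps dense sets to dense sets, so the periodic points of $f$ are dense; they are countably many because the alphabet $\{1,\dots,d\}$ is finite. For the last claim I would first record that an open distance-expanding map is \emph{locally eventually onto}: there is $\xi_0>0$ and, for each $r>0$, an $N=N(r)$ with $f^N(B(x,r))\supset B(f^Nx,\xi_0)$ for all $x$ (expansion of distances propagates a ball of definite size, and openness turns it into an honest neighborhood). Combined with transitivity and compactness of $X$, this upgrades transitivity to topological exactness --- for every nonempty open $U$ there is $n$ with $f^n(U)=X$ --- which trivially implies topological mixing; equivalently, it shows the irreducible incidence matrix $A$ is aperiodic.
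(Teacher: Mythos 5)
This statement is not proved in the paper at all: it is quoted, with attribution, as Theorems 4.3.12 and 4.5.7 of Przytycki--Urba\'nski \cite{PrzyUrb10}, and the authors use it as a black box to reduce item (2) of Theorem~\ref{mthm:genericallywild} to the topological Markov chain case. So there is no internal proof to compare yours against; what you have written is a reconstruction of the cited reference, and as such it follows the standard route (Markov partition of small diameter, coding map $\pi$ via nested pull-backs, removal of the countable union $\bigcup_n f^{-n}(\partial\cR)$ to get the generic set $Z$, and local eventual onto-ness to upgrade transitivity to mixing). Steps two and three are correct and complete as sketched, and you rightly identify the existence of the Markov partition as the genuinely hard step, which in \cite{PrzyUrb10} is carried out via shadowing.

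The one soft spot is in your fourth step. You derive density of periodic points of $f$ from the claim that periodic sequences are dense in $\Sigma_A$, but for an arbitrary $0$--$1$ incidence matrix this is false: if the transition graph has a symbol that lies on no cycle (e.g.\ transitions $1\to1$, $1\to2$, $2\to2$ only), every symbol has a predecessor and a successor, yet no periodic sequence enters the cylinder $[1,2]$. Surjectivity and openness of $f$ alone do not rule such matrices out, so without transitivity your argument does not close. The reference proves density of periodic points directly on $X$ via the closing/shadowing lemma for open distance-expanding maps (every sufficiently fine periodic pseudo-orbit, obtained from a recurrence of a genuine orbit segment, is shadowed by a true periodic orbit), and that is the argument you should substitute here; alternatively, under the transitivity hypothesis your coding argument does work, since then $A$ is irreducible and periodic sequences are dense in $\Sigma_A$. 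The remaining parts of step four (countability from finiteness of the alphabet, and mixing from transitivity plus the property $f^n(B(x,r))\supset B(f^nx,\xi_0)$) are the standard arguments and are fine.
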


Thus $\pi$ is a semiconjugation, generically a conjugation,
and we have $\tau^f_{\pi x}=\pi_*(\tau^\sigma_x)$ for
$x\in\XX$, where $\tau^f_y$ and $\tau^\sigma_x$ represent
the measures $\tau$ with respect to the $f$ and $\sigma$
dynamics, respectively.  Therefore,
$\pi(\W_\sigma\cap \pi^{-1}Z)\subset\W_f\cap Z$ is a generic
subset of $X$ if $\W_\sigma$ is a generic subset of $\XX$.

To conclude the proof of item (2) of
Theorem~\ref{mthm:genericallywild} we note that, \emph{if
  $f$ is an open continuous expansive and topologically
  transitive map}, then we can find a compatible metric with
respect to which \emph{$f$ becomes topologically mixing and
  conjugated on a generic subset to the mixing topological
  Markov Chain $\sigma:\XX\circlearrowleft$.} In this
setting, we know that $\W_\sigma$ is a generic subset of
$\XX$ and so $f$ admists a generic subset $\W_f$ of wild
historic points.

\subsection{Transitive local homeomorphisms with induced
  full branch Markov map}
\label{sec:transit-local-homeom}

For item (3) of Theorem~\ref{mthm:genericallywild} we recall
that $f:X_0\to X$ is a local homeomorphism where $X_0$ is an
open dense subset of a compact metric space $X$. We assume that
$f$ is topologically transitive and that there exists a open connected
subset $\Delta\subset X_0$ and an induced full branch Markov
map $F:G\to\Delta\supset G$. This means
\begin{enumerate}
\item[(a)] there exists a function $R:G\to\ZZ^+$ where $G$ is an
  open dense subset of $\Delta$ and $Fz=f^{R(z)}(z)$ for all
  $z\in G$;
\item[(b)] there exists an at most denumerable partition
  $\cP=\{\Delta^i\}_{i\ge1}$ of $G$ such that $R\mid
  \Delta^i\equiv R_i$ is constant on every element of $\cP$;
\item[(c)] $F_i=F\mid_{\Delta^i}:\Delta^i\to\Delta$ is an expanding
  homeomorphism: there exists $\sigma>1$ such that
  $d(F_ix,F_iy)\ge\sigma d(x,y)$ for all $x,y\in\Delta^i, i\ge1$.
\end{enumerate}
The assumptions on $F$ ensure that there exist a countable
dense subset of periodic orbits for $F$ in $\Delta$ and,
moreover, there exists a surjective continuous map
$h:\XX\to\Delta$ such that $h\circ \sigma=F\circ h$, where
$\sigma:\XX\circlearrowleft$ is the full shift with
countable number of symbols. Indeed, we just have to define
$h(\theta)=\cap_{n\ge0}\ov{F^{-n}\Delta^{\theta_n}}$ for
$\theta\in\XX$, which is well-defined by item (c)
above. Setting
$Z=\Delta\setminus\big(\cup_{n\ge1}F^{-1}(\Delta\setminus
G)\big)$
we have a generic subset of $\Delta$ such that
$h\mid_{h^{-1}Z}: h^{-1}Z\to Z$ is injective. Thus $h$ is a
conjugation between generic subsets.

We write
$\tau_x^F(A)=\limsup_{n\to\infty}\frac1n\sum_{j=0}^{n-1}
\delta_{F^jx}(A)$
for all $A\subset\Delta$ and also $\tau_y^f, \tau^\sigma_z$
for the same notions for $f$-orbits of $y\in X$ and
$\sigma$-orbits of $z\in\XX$, respectively. We also write
$\W_F$ for the set of wild historic points for
$F:G\to\Delta\supset G$, $\W_f$ the set of wild historic
points for $f:X_0\to X$ and $\W_\sigma$ for the set of wild historic
points of $\sigma:\XX\circlearrowleft$.

It is easy to see that $\tau^F_{h x}=h_*\tau^\sigma_x$ for
all $x\in\XX$ and consequently that
$h(\W_\sigma\cap h^{-1}Z)\subset\W_F\cap Z$ is a generic
subset of $\Delta$. Since $\cup_{n\ge0}f^{-n}\Delta$ is
dense in $X$ by transitivity, it is enough to show that
$\W_F\subset\W_f$ to conclude that $\W_f$ is a generic
subset of $X$, because $\W_f$ is $f$-invariant.

\begin{lemma}
  \label{le:W_F}
  $\W_F\subset \W_f$.
\end{lemma}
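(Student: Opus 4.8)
The plan is to transfer the wild behaviour of the base map $F$ to the full map $f$ by exploiting that the $F$-orbit of a point is exactly the orbit of the first-return-type induced dynamics, together with the fact that $f$ is an open local homeomorphism along the ``tower'' over $\Delta$. Fix $x\in\W_F$. Since $\eta_x^F(B)=\infty>0$ for every open $B\subseteq\Delta$, monotonicity of $\tau$ gives $\tau_x^F(B)>0$, so the $F$-orbit of $x$ is infinite and dense in $\Delta$, and hence the compact invariant set $\Lambda_f:=\overline{\cO_f^+(x)}$ contains $\overline\Delta$. Write $F^k(x)=f^{r_k}(x)$ with $r_k=\sum_{i<k}R(F^i x)$; thus the $f$-orbit of $x$ meets $\Delta$ exactly at the times $r_k$, and whenever $F^k x\in C\subseteq\Delta^i$ one has $f^{r_k+t}(x)=f^t(F^k x)\in f^t(C)$ for every $0\le t<R_i$.

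Next I would reduce an arbitrary open set to a tractable one. Let $A\subseteq\Lambda_f$ be open and nonempty; then $A$ meets $\cO_f^+(x)$, say $f^m(x)\in A$, and there are unique $k$ and $0\le t<R_i$ with $m=r_k+t$ and $F^kx\in\Delta^i$, so $f^m(x)\in f^t(\Delta^i)$. Since $f$ is a local homeomorphism on $X_0$ and $\Delta^i\subseteq X_0$, for a small enough open neighbourhood $B\subseteq\Delta^i$ of $F^kx$ the map $f^t$ is a homeomorphism of a neighbourhood of $B$ onto its image, so $f^t(B)$ is open; shrinking $B$ we may also assume $f^t(B)\subseteq A$. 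As $\eta_x^f$ is a monotone measure, it suffices to prove $\eta_x^f\big(f^t(B)\big)=\infty$.

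The heart of the argument is the inequality $\tau_x^f\big(f^t(C)\big)\ge c\,\tau_x^F(C)$ for all sufficiently small open $C\subseteq\Delta^i$, with $c>0$ independent of $C$. Indeed, from the inclusion $\{m:f^mx\in f^t(C)\}\supseteq\{r_k+t:F^kx\in C\}$ and evaluating frequencies along the times $M=r_n+t$ one gets $\tau_x^f(f^t(C))\ge\limsup_n \frac{N_n(C)}{r_n+t}$, where $N_n(C)=\#\{k<n:F^kx\in C\}$, while $\tau_x^F(C)=\limsup_n\frac{N_n(C)}{n}$; comparing these gives the inequality with $c=\big(\sup_n r_n/n+1\big)^{-1}$. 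With this in hand, take any cover $\{I_\ell\}$ of $f^t(B)$ by open sets of small diameter; pulling each $I_\ell$ back by the local inverse of $f^t$ produces a cover $\{C_\ell\}$ of $B$ by open subsets of $\Delta$ whose diameters tend to $0$ with those of the $I_\ell$ (uniform continuity of the inverse branches on compact sets), and $\sum_\ell\tau_x^f(I_\ell)\ge c\sum_\ell\tau_x^F(C_\ell)\ge c\,\eta_x^F(B)=\infty$ since $x\in\W_F$. Letting the mesh go to $0$ yields $\eta_x^f\big(f^t(B)\big)=\infty$, hence $\eta_x^f(A)=\infty$; as $A$ was arbitrary, $x\in\W_f$. (Alternatively, one can produce infinitely many pairwise disjoint sets $f^t(B_j)$ around distinct periodic points of $F$ inside $B$, with $\eta_x^f(\partial f^t(B_j))=0$ for suitable radii, and combine Lemma~\ref{Lema005454b} with countable additivity of $\eta_x^f$, exactly as in \eqref{eq:wild}.)

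The crux, and the only genuinely delicate point, is controlling the time-rescaling factor $n/r_n$ hidden in $c$: the inequality $\tau_x^f(f^t(C))\ge c\,\tau_x^F(C)$ is immediate whenever the return time $R$ is essentially bounded along the orbit of $x$ (then $r_n\le n\|R\|_\infty$), which covers the applications we have in mind and, in particular, the wild points coming from the sub-full-shift over a finite alphabet of symbols; in full generality one must instead arrange that the wild points actually needed are realised by $F$-itineraries along which $r_n/n$ stays bounded on a subsequence. The remaining ingredients — openness of $f^t(B)$, mesh control of the pulled-back covers, and the passage from $\overline{\cO_f^+(x)}$ to all relevant open sets — are routine consequences of compactness and of $f$ being an open local homeomorphism.
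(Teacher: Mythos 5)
Your strategy is genuinely different from the paper's: you try to prove $\tau_x^f\big(f^t(C)\big)\ge c\,\tau_x^F(C)$ with a constant $c$ uniform in $C$, and then push the Carath\'eodory covers of $f^t(B)$ back to covers of $B$. The paper instead never compares the two premeasures on arbitrary open sets; it only compares them on neighborhoods of \emph{periodic} orbits, where the time rescaling is the fixed finite ratio $\pi_F(p)/\pi_f(p)=\pi_F(p)/\sum_{i=0}^{\pi_F(p)-1}R(F^ip)>0$, and then invokes the criterion of Remark~\ref{rmk:wildperiodic} (wildness follows from $\tau_y(U)\ge\xi(q)>0$ for every neighborhood $U$ of every $q$ in a dense set of periodic points, because any open set contains periodic points of infinitely many periods and the resulting contributions to $\eta$ sum to $+\infty$, as in \eqref{eq:wild}).

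The gap in your argument is exactly the point you flag at the end, and it is fatal in the stated generality, not a technicality. The partition $\cP=\{\Delta^i\}$ is countable and the return times $R_i$ are unbounded in the intended applications (inducing schemes for expanding measures, \cite{Pinho2011}); your constant is $c=\big(\sup_n r_n/n+1\big)^{-1}$ with $r_n=\sum_{i<n}R(F^ix)$ a Birkhoff sum of an unbounded function along the $F$-orbit of $x$. Worse, a point $x\in\W_F$ is by construction maximally recurrent to \emph{every} cylinder $\Delta^i$, including those with arbitrarily large $R_i$, so for precisely the points you need one expects $\limsup_n r_n/n=+\infty$ and $c=0$. Even replacing $\sup_n$ by the subsequence $n_k$ realizing $\tau_x^F(C)=\limsup N_n(C)/n$ does not help, since you cannot choose that subsequence and $r_{n_k}/n_k$ may diverge along it; and the cover comparison $\sum_\ell\tau_x^f(I_\ell)\ge c\sum_\ell\tau_x^F(C_\ell)$ needs a single $c>0$ valid for all $\ell$ simultaneously. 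Your parenthetical alternative (disjoint sets around distinct periodic points, as in \eqref{eq:wild}) is the correct repair and is essentially the paper's proof, but as written it is only a gesture: to make it work you must restrict the frequency comparison to periodic orbits, where the rescaling factor $\pi_F(p)/\pi_f(p)$ is automatically positive, rather than to arbitrary small open sets. (A minor additional point: deducing density of the $F$-orbit of $x$ in $\Delta$ from $\eta_x^F(B)=\infty$ by ``monotonicity of $\tau$'' is not immediate, since $\eta$ is an infimum over covers and does not dominate, nor is dominated by, $\tau$ on open sets; the correct route is Lemma~\ref{Lema005454a}.)
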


\begin{proof}
  We observe that every $p\in\per(F)$ with period $\pi_F(p)$
  is such that $p\in\per(f)$ with period
  $\pi_f(p)=S^f_{\pi_F(p)}R(p)=\sum_{i=0}^{\pi_F(p)-1}R(F^ip)$.
  In addition, since $\ov{\per(F)}=\Delta$ and by
  transitivity $\ov{\cup_{n\ge0}f^n\Delta}=X$, we conclude that
  $\ov{\per(f)}=X$.

  From Remark \ref{rmk:unitmass}, there exists a
  topologically generic subset $Y$ of $\W_F$ and
  $\gamma:\per(F)\to\RR^+$ such that for all $y\in Y$ we get
  $\tau^F_y(U)\ge \gamma(p)$ for each neighborhood $U$ of
  $p\in\per(F)$. By the relation between the periods of
  $p\in\per(F)$ with respect to $F$ and to $f$, we obtain
  $\tau^f_y(U)\ge\gamma(p)\frac{\pi_F(p)}{\pi_f(p)}
  =\gamma(p)\frac{\pi_F(p)}{S^f_{\pi_F(p)}R(p)}=\xi(p)>0$.

  Since $p\in\per(F)$ is also a periodic point for $f$ and
  clearly $\xi(p)=\xi(fp)$, we have obtained $\xi:P\to\RR^+$
  so that, for every neighborhood $V$ of a point $q$ in the
  dense subset $P:=\cup_{i\ge0}f^i\per(F)$ of $\per(f)$, it
  holds $\tau^f_y(V)\ge\xi(q)$. This is enough to conclude,
  again by Remark \ref{rmk:unitmass}, that $y\in Y$ belongs
  to $\W_f$.
\end{proof}

To prove item (3) of Theorem~\ref{mthm:genericallywild},
just recall that $\W_f$ is $f$-invariant. Since
$\W_f\supset\W_F$ is residual in $\Delta$ (i.e., it contains
a denumerable intersection of open and dense subsets of
$\Delta$) and $f$ is a transitive local homeomorphism on an
open dense subset, then
$\W_f\supset\cup_{n\ge1}f^n(\W_f\cap\Delta)$ is a residual
subset of $X$.

\subsection{Special semiflows over local homeomorphisms}
\label{sec:special-flows-over}

For item (4) of Theorem~\ref{mthm:genericallywild}, we keep $f$ as in
the previous setting of Subsection~\ref{sec:transit-local-homeom},
take a measurable function $r:X\to[r_0,+\infty]$, where we fix $r_0>0$
and assume $r\mid_{X_0}<\infty$, and consider the special semiflow
over $f$ with roof function $r$, that we denote by
$\phi_t:X_f^r\circlearrowleft$; see e.g. \cite{KH95} or \cite{PM82}
for the definition and basic properties of special/suspension
flows. Here $X^r=\{(x,s)\in X\times[0,+\infty]: 0\le s <r(x)\}$ is the
ambient space of the flow and $X\simeq X\times\{0\}$ becomes a
cross-section for $\phi_t$.

In this setting we analogously define a pre-measure
$\tau_{(x,s)}^\phi(E) =
\limsup\limits_{T\to\infty}\frac1T\int_0^T1_{E}\big(\phi_t(x,s)\big)\
dt$ and then build the measure $\eta^\phi_{(x,s)}$ from the
pre-measure as explained before.  We wrote $\W_\phi$ for the
set of wild historic points for $\phi_t$.

If we assume that $r\le r_1$ for some constant $r_1>r_0$, then
for a given $x\in \wt{X_0}$ the hitting times at $X$ of the
forward $\phi$-orbit of $(x,0)$ are
$T_n=T_n(x)=S_n^fr(x)=\sum_{i=0}^{n-1}r(f^ix)\in [r_0n,r_1n]$ for all
$n\ge1$. For an elementary open set $A\times I$ of $X^r$,
where $A$ is a open in $X$ and $I$ is an open interval in
$\RR$ so that $A\times I\subset X^r$, we have
\begin{align}
  \tau_{(x,0)}^\phi(A\times I) &=\nonumber
  \limsup_{T\to\infty}\frac1T\int_0^T1_{A\times
    I}\big(\phi_t(x,0)\big)\ dt
  \\
  &\ge
  \limsup_{n\to\infty}\frac{n}{T_n}\cdot\frac1n\sum_{i=0}^{n-1}1_A\big(f^jx\big)
  \cdot\leb(I) = \frac{\leb(I)}{r_1}\tau_x^f(A)   \label{eq:wildphi}.
\end{align}
This shows that $\W_f\subset \W_\phi$ and so $\W_\phi$ contains
a residual subset of $X^r$ from item (3) of
Theorem~\ref{mthm:genericallywild} already proved.


\subsection{Stable sets of Axiom A basic sets}
\label{sec:basic-sets-axiom}

For item (5) of Theorem~\ref{mthm:genericallywild} we use
known results from the theory of Hyperbolic Dynamical
Systems; for the setting and relevant definitions, see
\cite{Sm67,Bo73,Bo75,BR75}.  

\subsubsection{The Axiom A diffeomorphism case}
\label{sec:axiom-diffeom-case}

We recall that each basic set
$\Lambda$ of an Axiom A diffeomorphism $f:M\to M$ of a
compact manifold is a finite pairwise disjoint union
$\cup_{i=1}^k\Lambda_i=\cup_{i=1}^{n-1}f^i(\Lambda_1)$ of
compacta such that $f^n(\Lambda_1)=\Lambda_1$, and $f^n\mid
\Lambda_1$ is semiconjugated to a subshift of finite type
$\XX$. That is, there exists a surjective continuous map
$h:\XX\to\Lambda_1$ such that $h\circ\sigma=f\circ h$.

This ensures that there is a generic subset
$Y_1=h(\W_\sigma)$ of wild historic points in $\Lambda_1$
for the action of $f^n$, which in turn generates a generic
subset $\hat Y=Y_1\cup f(Y_1)\cup\dots\cup f^{n-1}(Y_1)$ of
wild historic points of $\Lambda$ for the action of $f$.  In
addition, since points in the \emph{stable set of $\Lambda$}, given by
\begin{align*}
  W^s(\Lambda)=\{z\in M:d(f^n(z),\Lambda)\xrightarrow[n\to+\infty]{}0\},
\end{align*}
belong to the stable set of some point of $\Lambda$,
that is, $W^s(\Lambda)=\cup_{x\in\Lambda}W^s(x)$, where
\begin{align*}
  W^s(x)=\{z\in M:d(f^n(z),f^n(x))\xrightarrow[n\to+\infty]{}0\};
\end{align*}
and points in the stable set of $x$ have the same asymptotic sojourn
times as the orbit of $x$; we have that
$W^s(\hat Y)=\cup_{x\in\hat Y}W^s(x)$ is a topologically generic
subset of $W^s(\Lambda)$ formed by wild historic points, and
$\W_f\supset Y$.




\subsubsection{The Axiom A vector field case}
\label{sec:vector-field-case-1}

For a basic set $\Lambda$ of an Axiom A vector field $X$ on
a compact manifold, we analogously have that $\Lambda$ is
semiconjugated to a suspension flow
$\phi_t:\XX^r\circlearrowleft$ over a two-sided subshift of
finite type $\XX$ with a bounded roof function
$r:\XX\to[r_0,r_1]$ for some $0<r_0<r_1, r_0,r_1\in\RR$; see
\cite{Bo73,BR75}.

From item (3) of Theorem~\ref{mthm:genericallywild} already
proved, we see that $\W_\phi$ is a
topologically generic subset of $\XX^r$. If
$h:\Lambda\to\XX^r$ is the semiconjugation between the
actions of $X$ on
$\Lambda$ and $\phi_t$ on $\XX^r$, we get
$\W_X\cap\Lambda\supset h^{-1}\W_\phi$. Hence the set of
wild historic points on $\Lambda$ is again a topologically
generic subset.

This completes the proof of item (5) of
Theorem~\ref{mthm:genericallywild}.

\subsection{Expanding measures}
\label{sec:expanding-measures}
For item (6) of Theorem~\ref{mthm:genericallywild}, we use
\cite{Pinho2011}. 
We recall that a $C^{1+\alpha}$-map $f:M\setminus\cC\to M$
of a compact manifold $M$ is \emph{non-flat} if $f$ is a
local diffeomorphism everywhere except at a
\emph{non-degenerate singular/critical set} $\cC$, that is,
$M\setminus\cC$ is open and dense in $M$ and there are
$\beta,B>0$ so that
 \begin{itemize}
 \item[(S1)]
\hspace{.1cm}$\displaystyle{\frac{1}{B} d(x,\cS)^{\beta}\leq
\frac{\|Df(x)v\|}{\|v\|}\leq B d(x,\cC)^{-\beta}}$;
 \item[(S2)]
\hspace{.1cm}$\displaystyle{\left|\log\|Df(x)^{-1}\|-
\log\|Df(y)^{-1}\|\:\right|\leq
B\frac{ d(x,y)}{d(x,\cC)^{\beta}}}$;
 \end{itemize}
 for every $x,y\in M\setminus \cC$ with $d(x,y)<d(x,\cC)/2$
 and $v\in T_x M\setminus\{0\}$.

An invariant \emph{expanding measure} for $f$ is a
probability measure $\mu$ satisfying
\begin{itemize}
\item \emph{non-flatness:} $\mu(\cC)=0$, $f_*\mu\ll\mu$,
  $\mu$ admits a Jacobian $J_\mu f(x)$ with respect to $f$ well defined
  and positive $\mu$-a.e. and, for $\mu$-a.e. $x,y\in
  M\setminus \cC$ with $d(x,y)<d(x,\cC)/2$, we have
  \begin{align*}
    \left| \log\frac{J_\mu f(x)}{J_\mu f(y)}\right|
    \le\frac{B}{d(x,y)^\beta}\cdot d(x,y);
  \end{align*}
\item \emph{non-uniformly expansion}:
there exists $c>0$ such that for $\mu$-a.e. $x$
\begin{align*}
  \limsup_{n\to+\infty}\frac1n \sum_{j=0}^{n-1}\log\big\| Df(f^jx)^{-1} \big\| \le -c;
\end{align*}
\item \emph{slow recurrence $\cC$}: for every $\epsilon>0$ there
  exists $\delta>0$ such that for $\mu$-a.e. $x$
\begin{align*}
  \limsup_{n\to\infty} \frac1n \sum_{j=0}^{n-1}\big|
  \log d_{\delta}(f^jx,\cS) \big| < \epsilon;
\end{align*}
where $d_{\delta}(x,\cC)$ denotes the $\delta$-\emph{truncated
  distance} from $x$ to $\cC$ defined as $d_{\delta}(x,\cC)=d(x,\cC)$
if $d(x,\cC) \leq \delta$ and $d_{\delta}(x,\cC) =1$ otherwise.
\end{itemize}

Then \cite[Theorem B]{Pinho2011} ensures, in particular,
that every expanding invariant measure $\mu$ for a non-flat
map $f$ of a compact manifold admits an induced full branch
Markov map defined on an open subset of $\supp\mu$. Hence
the set $\W_f\cap\supp\mu$ is a topologically generic subset
of $\supp\mu$ by item (2) of
Theorem~\ref{mthm:genericallywild} already proved. This
completes the proof of item (6) of
Theorem~\ref{mthm:genericallywild}.

\begin{example}\label{sec:expand-measur}
  As detailed in \cite{Pinho2011} the class of expanding
  measures presented above encompasses
  \begin{enumerate}
  \item the absolutely continuous invariant probability
    measure for non-uniformly expanding maps introduced in
    \cite{ABV00,Al00} including the Viana maps from
    \cite{Vi97};
  \item all piecewise
    expanding $C^{1+\alpha}$ maps of the interval, which
    include the Lorenz-like transformations of the interval
    studied in \cite{APPV};
  \item the absolutely continuous invariant probability measures for
    smooth multidimensional expanding maps studied
    in~\cite{GorBoy89,Sa00}.
  \end{enumerate}
\end{example}

\subsection{Hyperbolic measures for diffeomorphisms and flows}
\label{sec:hyperb-measur-diffeo}

For item (7) of Theorem~\ref{mthm:genericallywild}, we use
the following well-known result from the theory of
non-uniform hyperbolicity; see~\cite{katok80},
\cite[Supplement]{KH95} and~\cite{BarPes2007} for a modern
presentation of this theory. We write $\per_h(f)$ for the
set of hyperbolic periodic points of the map $f$.

\begin{theorem}{\cite[Theorem S.5.3,
    pp. 694-695]{KH95}.}\label{thm:homoclinicmeasure}
  If $\mu$ is an ergodic hyperbolic continuous measure (that is, $\mu$
  has no atomic part) for a $C^{1+\alpha}$ diffeomorphism
  $f:M\circlearrowright$ of a compact manifold, for some given fixed
  $\alpha>0$, then there exists $p\in\per_h(f)$ such that
  $\supp(\mu)\subset H(p)=\overline{W^s(p)\pitchfork W^u(p)}$, that
  is, the support of $\mu$ is contained in the homoclinic class of
  $p$. In particular, $\supp(\mu)\subset\overline{\per_h(f)}$.
  \end{theorem}

\subsubsection{The diffeomorphism case}
\label{sec:diffeomorphism-case}

To prove item (6) of Theorem~\ref{mthm:genericallywild}, let
$\mu$ be an ergodic non-atomic hyperbolic probability
measure for a $C^{1+\alpha}$ diffeomorphism
$f:M\circlearrowright$ of a compact manifold, for some
$\alpha>0$. 

The Birkhoff-Smale Theorem~\cite{Sm65} (see also~\cite[Theorem
5.5]{Sm67}) ensures that every neighborhood of a homoclinic point
intersects a horseshoe.  A compact $f$ invariant subset $\Lambda$ is a
\emph{horseshoe} if there are $s,k\in\ZZ^+$ such that $\Lambda$
decomposes as a disjoint union $\Lambda_0\cup\dots\cup\Lambda_{k-1}$
satisfying $f^k(\Lambda_i)=\Lambda_i$,
$f(\Lambda_i)=\Lambda_{i+1\bmod k}$ and $f^k\mid \Lambda_0$ is
topologically conjugated to a full shift in $s$ symbols.  This
implies, after Theorem~\ref{thm:homoclinicmeasure}, that densely in
the support of $\mu$ we can find horseshoes.

  From item (1) already proved, we have that densely in
  $\supp(\mu)$ there are points with wild historic behavior in
  some horseshoe $\Lambda_z\subset\supp(\mu)$: let
  $z_n\in\supp(\mu)$ be an enumeration of such dense
  subset. This means in particular that $\eta_{z_n}(A)=+\infty$
  for all open subsets $A$ of $\Lambda_{z_n}$, for all $n\ge1$.

  For $z\in\supp(\mu), n\ge1$ and $\epsilon>0$ let
  \begin{align*}
    B(z,n,\epsilon)
    =
    \{x\in\supp(\mu):d(f^ix,f^iz)<\epsilon,\forall 0\le i <n\}
  \end{align*}
  be the $(n,\epsilon)$-ball (dynamical ball) around $z$.
  We consider
  $Y_i=\cap_{n\ge1}\cup_{k\ge n}f^{-k}B(z_i,n,1/n)$ for each
  $i\in\ZZ^+$. By construction, for a given $w\in Y_i$ with
  $i\in\ZZ^+$, we can find $k_n$ such that
  $f^{k_n}w\in B(z_i,n,1/n)$ and $k_n>e^n$. So, for any
  given $\epsilon>0$ and $y\in\Lambda_{z_i}$ and all big
  enough $n$ so that $1/n<\epsilon$, we get
  \begin{align*}
    \frac1{n+\ell}\sum_{j=0}^{n+\ell-1}1_{B(y,2\epsilon)}(f^jw)
    \ge
    \frac{\ell}{n+\ell}\cdot\frac1{\ell}\sum_{j=0}^{\ell-1}1_{B(y,\epsilon)}(f^jz_i)
  \end{align*}
  for all $0\le\ell<k_n$. That is
  $\tau_w(B(y,2\epsilon))\ge\tau_{z_i}(B(y,\epsilon))$.  In
  particular, this means that
  $\eta_w\mid\Lambda_{z_i}\ge\eta_z\mid\Lambda_{z_i}$ and so
  $\eta_w(B(z_i,\epsilon))=+\infty$.

  Now let us consider $Y=\cap_{n\ge1}\cap_{i=1}^n\cup_{k\ge
    n}f^{-k}B(z_i,n,1/n)$.  Since $\mu$ is $f$-ergodic, we
  have that $Y$ is a denumerable intersection of the open
  and dense subsets $\cap_{i=1}^n\cup_{k\ge
    n}f^{-k}B(z_i,n,1/n)$ of the compact set $\supp(\mu)$,
  thus $Y$ is residual in $\supp(\mu)$. Moreover, for $w\in
  Y$ and $A$ a non-empty open subset of $\supp\mu$, there
  exist $i\ge1, \epsilon>0$ such that
  $B(z_i,\epsilon)\subset A$, thus $\eta_w(A)=+\infty$,
  showing that every $w\in Y$ has wild historic behavior:
  $W_f\cap\supp(\mu)\supset Y$. This completes the proof for
  the support of a non-atomic ergodic hyperbolic probability
  measure for a  $C^{1+}$ diffeomorphism.

\subsubsection{The vector field case}
\label{sec:vector-field-case}

Let $\mu$ be a non-atomic ergodic hyperbolic probability
measure for a $C^{1+}$ vector field $X$ of a compact
manifold $M$. Then a flow version of
Theorem~\ref{thm:homoclinicmeasure} also holds, that is,
$\supp\mu$ is contained in the the closure of the hyperbolic
periodic points that have transverse homoclinic points and,
by ergodicity, it is in fact contained in an homoclinic
class of some hyperbolic periodic point $p$ for the flow
$\phi_t$ generated by $X$.

Hence, the version of the Birkhoff-Smale Theorem for vector fields
(see e.g.~\cite[Part II]{Sm67} and~\cite{shil67a}) guarantees that
every neighborhood of a homoclinic point intersects the
\emph{suspension} of a horseshoe, by a bounded roof function. We
recall that each special/suspension flow with bounded roof function
over a horseshoe admits a topologically generic subset of wild
historic points, from item (4) of Theorem~\ref{mthm:genericallywild}
already proved.

Then the same argument as in Subsection~\ref{sec:diffeomorphism-case}
shows that there exists a topologically generic subset of wild
historic points in $\supp\mu$. Indeed, according the results on
existence, uniqueness and continuity of solutions of Ordinary
Differential Equations, for each $x\in M$ and every $T>0$ there exists
$\epsilon=\epsilon(x,T)>0$ so that the dynamical ball
  \begin{align*}
    B(x,T,\epsilon)=\{y\in M: d(\phi_tx,\phi_ty)<\epsilon, \forall
    0\le t\le T\}
  \end{align*}
  is an open neighborhood of $x$ and note that $\epsilon(x,T)\to0$ as
  $T\to+\infty$. Hence, from the Birkhoff-Smale Theorem and the
  existence of a generic subset of wild historic points for suspended
  horseshoes, we have $D\subset\supp\mu$ a denumerable dense subset of
  wild historic points, each point in some suspended horseshoe. Given
  an enumeration $\{z_i\}_{i\in\ZZ^+}$ of $D$, we consider the set
  $Y=\cap_{n\ge1}\cap_{i=0}^n\cup_{k\ge n} \phi_{-k}B(z_i,
  n,\epsilon(z_i,n))$ which is again topologically generic in
  $\supp\mu$. Given $y\in\supp\mu, x\in Y, z_i\in D$ and $\delta>0$,
  for all large enough $n\in\ZZ^+$ there exists $k=k_n>e^n$ so that
  $\phi_kx\in B\big(z_i,n,\epsilon(z_i,n)\big)$ and
  $\delta>\epsilon(z_i,n)$ and also
  \begin{align*}
    \frac1{n+\ell}\int_0^n1_{B(y,2\delta)}\big(\phi_tx\big)\,dt
    \ge
    \frac{\ell}{n+\ell}\cdot\frac1{\ell}
    \int_0^\ell1_{B(y,\delta)}\big(\phi_tz_i\big)\,dt
  \end{align*}
  for each $0\le\ell<k_n$. This shows that
  $\tau_x^\phi\big(B(y,2\delta)\big)\ge
  \tau_{z_i}^\phi\big(B(y,\delta)\big)$ and so, since
  $y\in\supp\mu$ and $\delta>0$ where arbitrarily chosen, we
  get $\eta_x^\phi\ge\eta_{z_i}^\phi$ for all
  $i\in\ZZ^+$. Thus $\eta_x^\phi(A)=+\infty$ for all each
  subset $A$ of $\supp\mu$, since $D=\{z_i\}_{i\in\ZZ^+}$ is
  dense in $\supp\mu$. We proved $\W_\phi\supset Y$.

  This finishes the proof of item (7) of
  Theorem~\ref{mthm:genericallywild} and completes the proof
  of Theorem~\ref{mthm:genericallywild}.

  \begin{example}
    \label{ex:hyperb-measur-Lorenz}
    Every geometric Lorenz attractor, the classical Lorenz
    attractor, and every singular-hyperbolic attractor (also
    known as Lorenz-like attractors, which generalize the
    notion of uniform hyperbolicity to invariant sets of
    flows containing hyperbolic singularities accumulated by
    regular orbits), admit an ergodic hyperbolic measure
    which is also physical and, in particular, non-atomic;
    see \cite{APPV,AraPac2010}. Hence, the set of wild
    historic points inside these attractors is a
    topologically generic subset. In addition, the stable
    set for this class of attractors also admits a
    topologically generic set of wild historic points.
  \end{example}

  \begin{example}
    \label{ex:hyp-meas-Rovella}
    The \emph{contracting Lorenz attractors} (or
    \emph{Rovella attractors}) admit a non-atomic ergodic
    hyperbolic physical measure: see~\cite{Ro93}
    and~\cite{mtz001}. Hence this class of persistent
    attractors contains a topologically generic subset of
    wild historic points as well as their stable set.
  \end{example}

  \begin{example}
    \label{ex:hyp-meas-sec-hyp}
    In higher dimensions, an extension of uniform
    hyperbolicity encompassing singular flows analogous to
    that of singular-hyperbolicity for $3$-flows is the
    notion of \emph{sectional-hyperbolic
      sets}~\cite{MeMor06}. Recently, it has been
    shown~\cite{LeplYa17,MetzMor15} that
    sectional-hyperbolic attractors, for flows in any
    dimension greater or equal to $3$, also admit a a
    non-atomic ergodic hyperbolic physical measure. Hence,
    this class of attractors satisfies the same properties
    of abundance of wild historic points as the class of
    singular-hyperbolic attractors in
    Example~\ref{ex:hyperb-measur-Lorenz}.
  \end{example}

    \begin{remark}
    \label{rmk:nospecification}
    Sumi, Varandas and Yamamoto \cite{SuVaYa15} have shown
    that sectional-hyperbolic attractors (which include
    singular-hyperbolic attractors as a special
    $3$-dimensional case) do not satisfy
    specification. Hence, the genericity of wild historic
    points is more general than the genericity of points
    with maximal oscillation in systems with specification.
  \end{remark}


\section{On strong historic behavior}
\label{sec:strong-histor-behavi}

Here we prove Theorem~\ref{mthm:openhistoric-heteroclinic}
obtaining a partial characterization of the behavior of
Example~\ref{ex:Boweneye}. We start by observing that if
$\eta_x$ is an atomic measure, then the atoms must be
preperiodic points of the transformation.

\begin{lemma}
  \label{le:P}
  If $\eta_x$ is purely atomic for some $x\in\XX$, with
  $f:\XX\to\XX$ a continuous map with finitely many
  pre-images, then every atom $P$ is a preperiodic point for
  $f$. In particular, if $f$ is a bijection, then $P$ is a
  periodic point for $f$.
\end{lemma}

\begin{proof}
  From Theorem~\ref{TheoremContinuous} we know that $\eta_x$
  is a $f$-invariant measure. Then
  $\eta_x(f(P))=\eta_x(f^{-1}(f(P)))\ge\eta_x(P)>0$ and so
  $f(P)$ is an atom of $\eta_x$. Since we assume that
  $\eta_x$ has finitely many atoms, then $f$ maps this
  finite set into itself, and each atom is preperiodic for $f$.
\end{proof}

Let us assume that the system $f:M\to M$ is a diffeomorphism
on a compact boundaryless manifold $M$ and that there are at
least a pair of hyperbolic periodic points $P,Q$ of $f$
satisfying, for some $\epsilon>0$
\begin{itemize}
\item[(H)] $\eta_x$ is atomic with two atoms $P,Q$ for every
  $x$ either in $B_\epsilon(P)\setminus(W^s_\epsilon(P)\cup
  W^u_\epsilon(P))$ or $ B_\epsilon(Q)\setminus(W^s_\epsilon(Q)\cup
  W^u_\epsilon(Q))$.
\end{itemize}
We will show that under these conditions we have the same
configuration as in Example~\ref{ex:Boweneye}.

\begin{theorem}
  \label{thm:2Bowen}
  Under assumption $(H)$ we have that $W^s(P)=W^u(Q)$ and
  $W^u(P)=W^s(Q)$, that is, $P$ and $Q$ have heteroclinic
  connections.
\end{theorem}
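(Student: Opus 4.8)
The plan is to show that hypothesis (H) forces the forward orbit of every point near $P$ (off the local invariant manifolds) to have exactly the oscillating behaviour of Example~\ref{ex:Boweneye}, and then to read the heteroclinic connections off the geometry of these orbits. I will carry out the argument for fixed points $P,Q$; the periodic case is entirely analogous, working with the finite orbits of $P$ and $Q$ in place of the points. By Theorem~\ref{TheoremContinuous} every $\eta_x$ is $f$-invariant, and by Lemma~\ref{le:P} its atoms are periodic points, so under (H) we have $\eta_x=\alpha_x\delta_P+\beta_x\delta_Q$ with $\alpha_x,\beta_x>0$ for every $x\in B_\epsilon(P)\setminus(W^s_\epsilon(P)\cup W^u_\epsilon(P))$, and likewise near $Q$. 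Note that (H) is non-vacuous only if $P$ and $Q$ are saddles, so $W^s_\epsilon(P)\cup W^u_\epsilon(P)$ has empty interior and such $x$ are dense in $B_\epsilon(P)$.

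The core of the proof is a ``dynamical skeleton'' for such an $x$, extracted from the lemmas of Section~\ref{sec:additive-measure-eta}. Since $\supp\eta_x=\{P,Q\}\subseteq\omega(x)$, for every closed $K$ with $K\cap\{P,Q\}=\emptyset$ we get $\eta_x(K)=0$, hence $\tau_x(K)=0$ by Lemma~\ref{Lema005454a}; applied to $K=M\setminus(B_\epsilon(P)\cup B_\epsilon(Q))$ this says the orbit of $x$ visits the ``transition region'' with zero asymptotic frequency, and the same holds for every annulus around $P$ or $Q$. Conversely, covering $\{P\}$ by small balls in the Method~II construction gives $\tau_x(B_\delta(P))\ge\eta_x(\{P\})=\alpha_x$, and Lemma~\ref{Lema005454b} gives $\tau_x(\overline{B_\delta(P)})\le\eta_x(B_\delta(P))=\alpha_x$, so in fact $\tau_x(B_\delta(P))=\alpha_x>0$, and similarly $\tau_x(B_\delta(Q))=\beta_x>0$. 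No iterate $f^k(x)$ can remain forever in $B_\epsilon(P)$ (that would force $\eta_{f^kx}=\delta_P$, contradicting the atom at $Q$), so the orbit makes infinitely many ``$P$-blocks'' (maximal runs inside $B_\epsilon(P)$) and infinitely many ``$Q$-blocks''. The number of $P$-blocks up to time $n$ is $o(n)$: otherwise the corresponding exit points, which lie in a compact subset of $\overline{B_\epsilon(P)}$ bounded away from $P$ and from $Q$, would, after pigeonholing into finitely many small balls, accumulate with positive frequency on some $w\notin\{P,Q\}$, giving $\tau_x(\overline{B_\rho(w)})>0$ and contradicting $\tau_x(\overline{B_\rho(w)})\le\eta_x(B_\rho(w))=0$. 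The same bound holds for $Q$-blocks and for transition blocks, and combined with $\tau_x(B_\delta(P))=\alpha_x>0$ it yields $P$-blocks (and $Q$-blocks) of arbitrarily large length.

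Next I would extract the heteroclinic orbits. If $[k,k']$ is a $P$-block of length $\ell$, the uniform hyperbolic contraction near $P$ gives $\dist\big(f^{k'}(x),W^u_\epsilon(P)\big)\le C\lambda^{\ell}$ while $f^{k'}(x)$ stays a definite distance from $P$; letting $\ell\to\infty$ along a subsequence we obtain $z^\ast\in W^u_\epsilon(P)\setminus\{P\}\subseteq W^u(P)$. Dually, the entry points of arbitrarily long $Q$-blocks converge to some $w^\ast\in W^s_\epsilon(Q)\setminus\{Q\}\subseteq W^s(Q)$. Since infinitely often a $P$-block is immediately followed by a $Q$-block, the matching of these two limits — controlling the transition time along a suitable subsequence using the vanishing of $\tau_x$ on the transition region together with the hyperbolic estimates at $P$ and $Q$ — gives $f^{t}(z^\ast)=w^\ast\in W^s(Q)$ for some $t\ge0$, hence $z^\ast\in W^u(P)\cap W^s(Q)\neq\emptyset$; the symmetric argument (using (H) near $Q$) gives $W^u(Q)\cap W^s(P)\neq\emptyset$.

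Finally I would upgrade these intersections to the equalities $W^u(P)=W^s(Q)$ and $W^s(P)=W^u(Q)$. Because $W^u(P)=\bigcup_{n\ge0}f^n(W^u_\epsilon(P))$ and $W^s(Q)$ is fully invariant, it suffices to prove $W^u_\epsilon(P)\setminus\{P\}\subseteq W^s(Q)$, $W^s_\epsilon(Q)\setminus\{Q\}\subseteq W^u(P)$, and the $P\leftrightarrow Q$ counterparts. For the first inclusion I would fix $y\in W^u_\epsilon(P)\setminus\{P\}$ and approximate it by off-manifold points $z$ lying on the local stable leaf of $y$ inside $B_\epsilon(P)$; by the local product structure the forward orbit of $z$ shadows that of $y$ for a time tending to $\infty$ as $z\to y$, so the frequency information forced on $z$ by (H) (concentration near $\{P,Q\}$, via the skeleton of the second paragraph, applied simultaneously to all such $z$) combined with the fact that the orbit of $y$ leaves $B_\epsilon(P)$ for good pins down $\omega(y)=\{Q\}$, i.e. $y\in W^s(Q)$; Corollary~\ref{corollary0562572} is used to turn frequency statements into honest limits where possible. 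The reverse inclusion is obtained the same way, now using that for $z$ near a point of $W^s_\epsilon(Q)$ the atom $\alpha_z\delta_P$ forces the orbit to leave $B_\epsilon(Q)$ near $W^u_\epsilon(Q)$ and return to $P$. The main obstacle, both in the extraction step and here, is precisely transferring ``the orbit goes to $Q$'' across the limits: $\eta_x$ records only asymptotic frequencies of open sets, so one must carefully interlace this soft information with the quantitative hyperbolicity at $P$ and $Q$ to rule out long transitions and extra branches of $W^u(P)$ and thereby obtain genuine convergence of orbits.
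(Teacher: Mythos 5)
Your first two paragraphs (the block decomposition, the $o(n)$ bound on the number of blocks, and the extraction of limits $z^\ast\in W^u_\epsilon(P)\setminus\{P\}$ and $w^\ast\in W^s_\epsilon(Q)\setminus\{Q\}$ from long blocks) are sound and close in spirit to the paper, which likewise uses $\tau_x(B_\delta(P))>0$ to produce iterates $x_{n_k}$ accumulating on a point $y\in W^s_{\epsilon/2}(P)$. But the two steps that actually produce the heteroclinic connections have genuine gaps. First, the matching $f^{t}(z^\ast)=w^\ast$ requires the transition times between the end of a $P$-block and the start of the following $Q$-block to be bounded along a subsequence; the vanishing of $\tau_x$ on the transition region only says that the \emph{total} time spent there is $o(n)$ and does not bound individual transition times, and since $z^\ast$ lies on $W^u_\epsilon(P)$ it is excluded from hypothesis (H), so you cannot invoke the hypothesis at $z^\ast$ to close this. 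The paper sidesteps the issue entirely: it iterates the genuine limit point $y\in W^s(P)$ \emph{backward}, so that $y_{-\ell}=f^{-\ell}(y)$ is an actual orbit point, and shows by the frequency argument that some $y_{-\ell}$ must fall in $B_\epsilon(Q)$.

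Second, and more seriously, your upgrade from $W^u(P)\cap W^s(Q)\neq\emptyset$ to the asserted equalities cannot work as described. You propose to show $y\in W^s(Q)$ for $y\in W^u_\epsilon(P)\setminus\{P\}$ by approximating $y$ with off-manifold points $z$ to which (H) applies and using finite-time shadowing; but $\eta_z$ records only asymptotic frequencies, hence says nothing about the finitely many iterates during which $z$ shadows $y$, so no amount of such approximation pins down $\omega(y)$. (Note also that a single transversal heteroclinic point already yields nonempty intersections, so nonempty intersection is far from equality.) The engine of the paper's proof is a different, elementary observation that you never isolate: \emph{no point of $W^s(P)$ can lie in $B_\epsilon(Q)\setminus(W^s_\epsilon(Q)\cup W^u_\epsilon(Q))$}, because (H) would give such a point an atom of positive mass at $Q$ while its forward orbit converges to $P$, forcing $\eta=\delta_P$. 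Applied to $y_{-\ell}\in W^s(P)\cap B_\epsilon(Q)$ this yields $y_{-\ell}\in W^u_\epsilon(Q)$ and, moreover, forbids both transversal intersections and tangencies of $W^s(P)$ with $W^u(Q)$ there, so the whole local component of $W^s(P)$ through $y_{-\ell}$ must be contained in $W^u(Q)$; dimension counting and invariance of the manifolds then give $W^s(P)=W^u(Q)$, and the symmetric argument gives $W^u(P)=W^s(Q)$. I recommend replacing your matching and upgrade steps with this containment argument.
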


\begin{proof}
  Let us fix $\epsilon>0$ with the properties given in
  assumption $(H)$ and $x$ such that
  $\eta_x=a\delta_P+b\delta_Q$ with $a,b>0$.

  The assumption that $\eta_x$ is formed by precisely two
  atoms ensures that $P,Q$ are fixed points, for otherwise
  all the points in the orbit of $P,Q$ would also be atoms
  of $\eta_x$.

  The definition of $\eta_x$ ensures that
  $\tau_x(B_\delta(P))>0$ for all $\delta>0$. Hence we can
  find a sequence $k_i$ of iterates such that
  $x_{k_i}:=f^{k_i}(x)\to P$ when $i\to+\infty$. Since $P$
  is a hyperbolic fixed point, there exists
  $n_k\nearrow\infty$ and a point $y\in W^s_{\epsilon/2}(P)$
  such that $x_{n_k}\to y$ when $k\to\infty$.

  Writing $\sigma>1$ for the least expanding eigenvalue of
  $Df(P)$ and $\lambda=\sup\|Df\|$ we obtain for some
  $m_k\nearrow\infty$ 
  \begin{align}\label{eq:dist}
    d(x_{n_k},y)\le \sigma^{-m_k} \qand
    d(x_{n_k-i},y_{-i}) \le \sigma^{-m_k}\lambda^i, 
    \quad 0\le i\le n_k.
  \end{align}
  We claim that there exists $\ell\ge1$ such that
  $y_{-\ell}\in B_\epsilon(Q)$.

  Indeed, let us assume that $y_{-\ell}\not\in
  B_\epsilon(Q)$ for each $\ell\ge1$. Hence, for every big
  enough $k$, each visit $x_{n_k}$ to $B_\epsilon(P)$
  corresponds to a visit to
  $K=B_\epsilon(W^s_{\epsilon}(P))\setminus(
  B_\epsilon(P)\cup B_\epsilon(Q))$ of $x_{n_k-\ell}$ for
  some $\ell\ge1$; see Figure~\ref{fig:nearP}. Moreover from
  (\ref{eq:dist}) we can ensure that the set of $\ell$ for
  which $x_{n_k-\ell}$ visits $K$ has size proportional to
  $m_k$ for big $k$, that is
    \begin{align*}
     \frac1{m_k} \max\{\ell\ge1:
     \sigma^{-m_k}\lambda^{\ell}<\epsilon\}
     \xrightarrow[k\to\infty]{}-\frac{\log\sigma}{\log\lambda}=\xi.
    \end{align*}
Thus we obtain that $\tau_x(M\setminus(B_\epsilon(P)\cup
B_\epsilon(Q))) \ge \tau_x(K) \ge\xi\tau_x(B_\epsilon(P))>0$
which contradicts the assumption that the atoms of $\eta_x$
are $P,Q$ only. This proves the claim.

\begin{figure}[htpb]
  \centering
  \includegraphics[width=7cm]{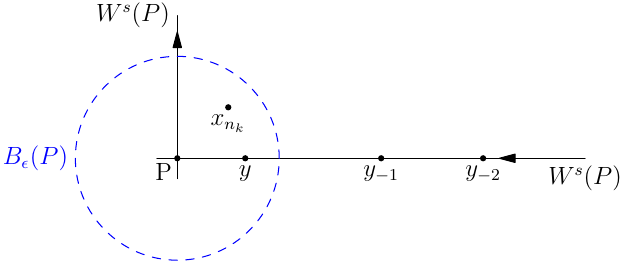}
  \caption{The position of $y$ and $y_\ell$ around $P$.}
  \label{fig:nearP}
\end{figure}

We note that $y_{-\ell}\in B_\epsilon(Q)$ does not satisfy
$(H)$ unless $y_{-\ell}\in W^u(Q)\cup W^s(Q)$. But because
$y\in W^s(P)$ we deduce that $y_{-\ell}\in W^u(Q)$.

\begin{figure}[htpb]
  \centering
  \includegraphics[width=7cm]{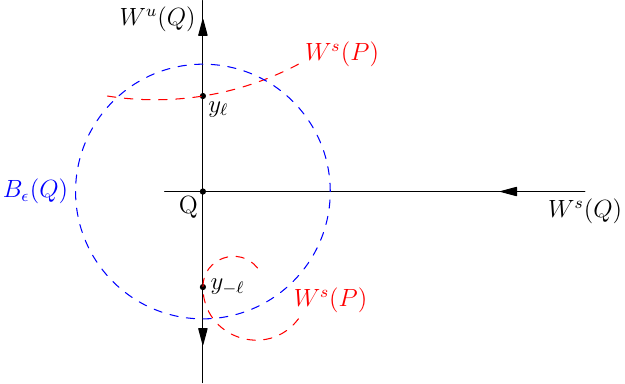}
  \caption{The position of $y_{-\ell}$ and $W^s(P)$ near
    $Q$; in the upper side the transversal situation, in the
    lower side the tangent situation.}
  \label{fig:nearQ}
\end{figure}

Moreover, see Figure~\ref{fig:nearQ}, we cannot have a
transversal intersection between $W^u(Q)$ and $W^s(P)$ at
$y_{-\ell}\in B_\epsilon(Q)$, for otherwise we would obtain
a point $z\in W^s(P)\cap B_\epsilon(Q)$ which contradicts
$(H)$. By the same reason, a tangency between $W^u(Q)$ and
$W^s(P)$ at $y_{-\ell}\in B_\epsilon(Q)$ is not
allowed. Hence the connected component of $W^s(P)$ in
$B_\epsilon(Q)$ containing $y_{-\ell}$ must be contained in
$W^u(Q)$. Thus, $W^u(Q)\subset W^s(P)$ since both invariant
manifolds are immersed submanifolds of $M$ and they are
uniquely defined in a neighborhood of $P,Q$.

An analogous reasoning provides $W^u(P)\subset
W^s(Q)$. Since the dimensions of the stable and unstable
manifolds of each $P,Q$ are complementary, we conclude that
the dimensions of $W^s(P)$ and $W^u(Q)$ are the same, and so
$W^s(P)=W^u(Q)$. Similarly we arrive at $W^u(P)=W^s(Q)$.
\end{proof}

\begin{remark}
  \label{rmk:finiteatoms}
  The argument in the proof of Theorem~\ref{thm:2Bowen} can
  be easily adapted to a setting where the number of atoms
  of $\eta_x$ is a finite set of hyperbolic periodic points
  for all $x$ in a neighborhood of each periodic point such
  that $x$ leaves that neighborhood in the future and
  past. That is, for all $x$ in a neighborhood of the
  periodic points with the exception of its invariant
  manifolds.
  We then obtain heteroclinic connections between a finite
  family of hyperbolic periodic points as
  in~\cite{gaunersdorfer1992}.
\end{remark}


\section{Conjectures}
\label{sec:some-open-questions}

We believe that it is possible to use properties of the
measures $\eta_x$  to understand certain dynamical
features of the system involved. 

As a simple example, we observe that $\per(f)=\emptyset$ for
a continuous map $f:\XX\to \XX$ of a compact space, implies
$\eta_x(\XX)<\infty$ for all $x\in\XX$ (e.g., an irrational
circle rotation or torus translation).

Indeed, given $m>1$, every point $y\in\XX$ admits a
neighborhood $U_{y,m}$ such that $U_{y,m}\cap
f^i(U_{y,m})=\emptyset, i=1,\dots,m$ and so
$\eta_x(U_{y,m})<1/m$ for every $x\in\XX$. We obtain an open
cover $\{U_{y,m}\}_{y\in\XX}$ of the compact $\XX$ and so a
finite subcover $U_1,\dots,U_k$ exists. Hence
$\eta_x(\XX)\le\sum_{i=1}^k\eta_x(U_i) \le k/m <\infty$.

Another observation is that if $\eta_x(\XX)<\infty$ for all $x\in\XX$,
then $h_{top}(f)=0$ for a diffeomorphism $f:S\to S$ of a compact
surface or a endomorphism $f:I\to I$ of the circle or interval; and
also for a $C^{1+}$ vector field $G$ on a $3$-manifold.

Indeed, in those cases, 
$h_{top}(f)>0$ ensures the existence of a horseshoe for $f$, or a
suspended horseshoe for the flow $\phi_t$ of $G$ if
$h_{top}(\phi_1)=0$; see \cite{katok80}.  In both cases, these
invariant subsets are conjugated either to a full shift with finitely
many symbols, or to a suspension of such shift; and so if $\eta_x$ is
always a finite measure we contradict
Theorem~\ref{mthm:genericallywild}.

It is then natural to pose the following

\begin{conjecture}\label{conj:htop0}
  Every smooth ($C^{1+}$) diffeomorphism or vector field of a compact
  manifold $\XX$ satisfying $\eta_x(\XX)<\infty$ for all $x\in\XX$ has
  zero topological entropy.
\end{conjecture}



Note that the examples provided by Beguin, Crovisier and Le
Roux in \cite{BegCrovRou07} show that this conjecture is
false if we allow the dynamics to be just a homeomorphism.

It is known in many cases that points with historic behavior form a
geometrically big subset (full Hausdorff dimension) of the dynamics;
see e.g. Barreira-Schmeling \cite{BarSch00}.  Jordan, Naudot and Young
showed in~\cite[Proposition 4.2]{JNY09} that points with ``orbits of
type $B_2$'' carry full topological entropy for the full shift with
finitely many symbols; see Example~\ref{ex:Naudot}. Recently Zhou and
Chen~\cite{ZhCh13a} have show that the set of historic points carries
full topological pressure for systems with non-uniform specification
under certain conditions; this has been generalized by Tian-Varandas
\cite{VarXue}. Here we have shown that wild historic points are
generic in a broad class of examples, so it is natural to pose the
following.

\begin{conjecture}
  \label{conj:fullHD}
  In the class of examples considered in
  Theorem~\ref{mthm:genericallywild}, the set of wild
  historic points has full Hausdorff dimension, full
  topological entropy and full topological pressure.
\end{conjecture}

Since the set $\cH_f\setminus\cW_f$ of historic points which
are not wild is contained in the complement of a generic
subset, that is, $\cH_f\setminus\cW_f$ is meagre, we also
conjecture that this set is small in other ways.

\begin{conjecture}
  \label{conj:notfullHD}
  In the class of examples considered in
  Theorem~\ref{mthm:genericallywild}, \emph{the subset of
    historic points which is not wild is not of full
    Hausdorff dimension, does not carry full topological
    entropy nor full topological pressure}.
\end{conjecture}

Developing our observation about absence of wild historic
behavior after Theorem~\ref{mthm:genericallywild}, we
propose the following.

\begin{conjecture}
  \label{conj:nowild}
  Absence of wild historic points for a smooth enough
  ($C^{1+}$) dynamics of a diffeomorphism or a vector field
  in a compact manifold implies that all invariant
  probability measures are either atomic or have only zero
  Lyapunov exponents.

  An analogous conclusion holds for all smooth enough
  ($C^{1+}$) local diffeomorphisms away from a
  critical/singular set which is sufficiently regular
  (non-flat).
\end{conjecture}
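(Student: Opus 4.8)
We sketch a possible approach to Conjecture~\ref{conj:nowild}. The plan is to argue by contraposition: assuming $f$ (resp.\ the vector field, resp.\ the local diffeomorphism restricted to the complement of its critical/singular set) carries a non-atomic invariant probability measure $\mu$ with at least one nonzero Lyapunov exponent, we want to produce points with wild historic behavior. The first step is a reduction to an ergodic measure. By the Ergodic Decomposition Theorem~\ref{mthm:ergodicdecomp} we have $\mu=\int\eta_x\,d\mu$ with $\eta_x$ ergodic for $\mu$-a.e.\ $x$, and by Oseledets' theorem the Lyapunov exponents at $\mu$-a.e.\ $x$ coincide with the (constant) exponents of the ergodic component $\eta_x$; hence a positive-measure set of points $x$ has $\eta_x$ with a nonzero exponent. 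Ideally one also arranges $\eta_x$ non-atomic there---this is not automatic, and is one of the obstacles discussed below---and, replacing $f$ by $f^{-1}$ if necessary, one may then assume the top exponent $\lambda_1$ of $\eta_x$ is positive. With the measure now ergodic, non-atomic and with $\lambda_1>0$, the case where it has \emph{no} zero exponent is already covered: this is item~(7) of Theorem~\ref{mthm:genericallywild} (item~(6) for the local-diffeomorphism statement), proved there by combining Katok's closing lemma, which gives hyperbolic periodic points dense in $\supp(\mu)$, with the Takens-type construction built on the density of these periodic skeleta.

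The genuinely new content is the \emph{mixed-exponent} case: the measure is ergodic, non-atomic, has $\lambda_1>0$, but also some exponent equal to zero, so it is only partially hyperbolic along its Pesin blocks. The strategy I would try is to fix a Pesin block $\Lambda_\ell$ of positive measure on which the splitting into nonzero-exponent directions and a zero-exponent central direction has uniform size and angle estimates. Along the local unstable plaques through $\Lambda_\ell$ the first-return map is non-uniformly expanding with bounded distortion---here $C^{1+}$ smoothness and absolute continuity of the Pesin unstable lamination are used---so a Rokhlin/Kakutani castle over $\Lambda_\ell$ should yield an induced subsystem that either carries an \emph{expanding measure} in the sense of item~(6) or admits an \emph{induced full branch Markov map} in the sense of item~(3), while still seeing all of $\supp(\mu)$ and remaining transitive there. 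Items~(3)/(6) of Theorem~\ref{mthm:genericallywild} then produce a topologically generic set of wild historic points for that subsystem; since $\eta_x$ is monotone and the castle spreads the central direction transversally, these points remain wild historic for $f$ on $\Lambda:=\supp(\mu)$, and the standard packaging---the condition that the orbit of $x$ realise, at some finite time, a prescribed large frequency of visits to a prescribed basic open subset of $\Lambda$ is open, dense by transitivity, and an appropriate infinite schedule of such conditions forces $\eta_x$ to put infinite mass on every open subset of $\Lambda$---promotes this to a denumerable intersection of open dense sets. For vector fields one passes to a global cross-section and invokes the suspension-semiflow statement, item~(4).

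I expect the main obstacle to be precisely the zero exponents. Katok's closing and shadowing lemmas, which feed item~(7) its dense families of hyperbolic periodic points, require hyperbolicity and degenerate along a central direction, so producing in the mixed case enough hyperbolic pseudo-orbits and periodic orbits, densely spread in $\supp(\mu)$, to drive the Takens construction---equivalently, showing that a measure with a positive exponent does induce an expanding measure on a suitable castle---is the crux, and it is where $C^{1+}$ smoothness (and non-flatness of the critical set in the local-diffeomorphism case), through absolute continuity of the unstable lamination, is essential. A second and subtler obstacle concerns the reduction to ergodicity: the literal ``all invariant measures'' form of the conjecture needs care, since there are simple examples---for instance the product on $\TT^2$ of the identity on one circle with a Morse--Smale map on the other---where $f$ carries a non-atomic invariant measure with a positive Lyapunov exponent but has no historic points at all; so the final argument must either restrict to ergodic measures or reinforce the hypothesis, e.g.\ with a condition excluding such degenerate (normally hyperbolic, tangentially trivial) invariant sets, and clarifying this point is itself part of settling the conjecture.
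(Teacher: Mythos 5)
The statement you are asked to prove is posed in the paper as Conjecture~\ref{conj:nowild}; the paper offers no proof of it, only the heuristic discussion preceding and following it, so there is nothing to compare your argument against except the known partial cases. Your proposal correctly identifies those: when the (ergodic, non-atomic) measure has all Lyapunov exponents nonzero, items (6) and (7) of Theorem~\ref{mthm:genericallywild} apply, and you correctly isolate the mixed-exponent case as the genuinely open content. But your proposal does not close that case, and the step where it would have to is exactly where it fails. The notion of expanding measure used in item (6) requires $\limsup_n\frac1n\sum_{j=0}^{n-1}\log\|Df(f^jx)^{-1}\|\le -c$, i.e.\ \emph{all} exponents positive; a measure with one positive and one zero exponent does not induce such a measure on any Kakutani castle without first quotienting out the central direction, and neither the existence nor the absolute continuity of that quotient is available when the central exponent is zero --- Pesin theory and Katok's closing lemma both degenerate there, as you yourself note. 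Likewise, promoting wild historic points of an induced subsystem supported on a Pesin block to a topologically generic subset of $\supp\mu$ uses the density of hyperbolic horseshoes in $\supp\mu$ (Subsection~\ref{sec:hyperb-measur-diffeo}), which again is only known for hyperbolic measures. So what you have written is a plausible research programme, not a proof, and should be presented as such.

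Your second observation is correct and important: the map $g=Id\times f$ of Example~\ref{ex:general-physical-not-physical} carries the non-atomic invariant measure $\leb\times\delta_S$, which has a nonzero Lyapunov exponent, while $g$ has no historic points at all; hence the conjecture as literally stated fails unless ``atomic'' is read as ``all ergodic components are atomic'' or the statement is restricted to ergodic measures. Any eventual proof must begin by fixing the statement in this way, and flagging this is a genuine contribution of your write-up.
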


Note that from Peixoto's Theorem
\cite{peixoto62,PM82,gutierrez78,GuBe05} for an open and
dense subset of vector fields in the $C^r$ topology on
compact orientable surfaces, for all $r\ge1$, the limit set
of every orbit is contained in one of finitely many
hyperbolic critical elements (fixed points or periodic
orbits). Hence wild historic points are absent from an open
and dense subset of smooth continuous time dynamics on
surfaces.  Thus, for vector fields
Conjecture~\ref{conj:nowild} makes sense only on manifolds
of dimension $3$ or higher.

We propose the following weakening of Condition (H) from
Theorem~\ref{mthm:openhistoric-heteroclinic}.
\begin{conjecture}
  \label{conj:Hprime}
  For a $C^r$ diffeomorphism of a compact manifold of dimension $2$ or
  higher, $r\ge1$, if there exists a point $x$ and two hyperbolic
  saddle periodic points $P,Q$ and $a,b\ge0, a+b>1$ so that
  \begin{align*}
    \eta_x
    =
    a\cdot\frac1{\pi(P)}\sum_{j=1}^{\pi(P)}\delta_{f^jP}
    +
    b\cdot\frac1{\pi(P)}\sum_{j=1}^{\pi(Q)}\delta_{f^jQ},
  \end{align*}
  where $\pi(P), \pi(Q)$ give the minimal periods of $P, Q$, then $f$
  is accumulated in the $C^r$ topology by diffeomorphisms $g$ so that
  the continuations $P_g, Q_g$ of the periodic points $P,Q$ for the
  diffeomorphism $g$ are homoclinically related.
\end{conjecture}

We note that the modification of Bowen's Example~\ref{ex:Boweneye}
given in \cite{JNY09} with non-hyperbolic saddle points suggest the
following.

\begin{conjecture}\label{conj:Hprimenonhyp}
  The statement of Conjecture~\ref{conj:Hprime} still holds true if we
  remove the hyperbolic assumption on $P,Q$.
\end{conjecture}


\def\cprime{$'$}

\bibliographystyle{abbrv}

\end{document}